\documentclass[11pt]{amsart}
\usepackage[hmarginratio=1:1]{geometry}              % See geometry.pdf to learn the layout options. There are lots.
\usepackage[parfill]{parskip}    % Activate to begin paragraphs with an empty line rather than an indent
\usepackage{graphicx}
\usepackage{amssymb}
\usepackage{epstopdf}
\usepackage{amsthm} 
\usepackage{marvosym}
\usepackage{enumerate}
\usepackage[all]{xy}
\usepackage[linktocpage]{hyperref}
\usepackage{float}

\usepackage{caption}
\usepackage{subcaption}

\newtheorem{thm}{Theorem}
\numberwithin{thm}{section}
\newtheorem{maintheorem}{Theorem}

\newtheorem{maincorollary}[maintheorem]{Corollary}

\newtheorem{lemma}[thm]{Lemma}

\newtheorem{proposition}[thm]{Proposition}

\newtheorem{conjecture}[thm]{Conjecture}

\newtheorem{corollary}[thm]{Corollary}

\theoremstyle{remark}
\newtheorem{rmk}[thm]{Remark}

\theoremstyle{definition}
\newtheorem{definition}[thm]{Definition}
\newtheorem{example}[thm]{Example}

 %User defined for empty symbol to jump line
% ********************** newcommand *********************************

% *********************** frequently used math symbols from AMS *******

\newcommand{\overl}[1]{\overline{#1}}

% ************** Some frequently used symbols - User defined ***********
% ************** Also Called MACROS ************************************

\newcommand{\R}{\mathbb{R}}

\newcommand{\Z}{\mathbb{Z}}

\newcommand{\T}{\mathbb{T}}
\newcommand{\E}{\mathbb{E}}
\newcommand{\Sa}{\mathbb{S}}
\newcommand{\Hy}{\mathbb{H}}

\newcommand{\Aut}{\text{Aut}}

\newcommand{\Isom}{\text{Isom}}

\newcommand{\lk}{\text{lk}}
\newcommand{\st}{\text{st}}
\newcommand{\Cone}{\text{Cone}}
\newcommand{\Int}{\text{int}}

\title{Isometry groups of CAT(0) cube complexes}
\author{Corey Bregman}
\begin{document}
\maketitle
\begin{abstract}Given a CAT(0) cube complex $X$, we show that if $\Aut(X)\neq \Isom(X)$ then there exists a full subcomplex of $X$ which decomposes as a product with $\R^n$. As applications, we prove that if $X$ is $\delta$-hyperbolic, cocompact and 1-ended, then $\Aut(X)=\Isom(X)$ unless $X$ is quasi-isometric to $\Hy^2$, and extend the rank-rigidity result of Caprace--Sageev to any lattice $\Gamma\leq \Isom(X)$.
\end{abstract}
\section{Introduction}\label{sec:Intro}
Let $X$ be a CAT(0) cube complex. Throughout, we will assume $X$ is finite-dimensional and locally finite. The purpose of this article is to investigate to what extent the CAT(0) metric on $X$ determines its cube complex structure.  By \textit{cubical automorphism} of $X$, we mean a bijective map $X\rightarrow X$ which takes cubes isometrically to cubes, preserving the combinatorial structure.  The CAT(0) metric on $X$ is the path metric induced by the Euclidean metric on each $n$-cube, hence any cubical automorphism of $X$ will be an isometry of the CAT(0) metric. Denote the automorphism group of $X$ by $\Aut(X)$ and the isometry group by $\Isom(X)$. By a lattice $\Gamma\leq \Isom(X)$, we will mean a discrete subgroup acting properly discontinuously and cocompactly on $X$.

The most basic example of a CAT(0) cube complex is $\R^n$, tiled in the usual way by cubes $[0,1]^n$. We will call this the \textit{standard cubulation} of $\R^n$.  The path metric space obtained from this cubulation is of course Euclidean $n$-space $\E^n$. The isometry group of Euclidean space $\Isom(\E^n)$ is isomorphic to  $\R^n\rtimes O(n)$ where $\R^n$ is the subgroup of translations and $O(n)$ is the orthogonal group. In contrast, the automorphism group of the standard cubulation is the group of isometries which preserve the orthogonal lattice $\Z^n\subset \E^n$, which can be described as $\Z^n\rtimes O(n,\Z)$.  Here, $O(n,\Z)$ is the signed symmetric group. Hence in the case of $\E^n$ the full isometry group is much larger than the automorphism group. Our main theorem states that this is essentially the only source of non-cubical isometries:
\begin{maintheorem} \label{main1}Let $X$ be a CAT(0) cube complex.  If $\Aut(X)\subsetneq\Isom(X)$, then there exists a full subcomplex of $X$ which decomposes as a cubulated product $Y\times \R^n$. If $n\neq 2$, the cube complex structure on $\R^n$ is standard.  
\end{maintheorem}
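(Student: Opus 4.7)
The plan is to use an isometry $\phi \in \Isom(X) \setminus \Aut(X)$ to detect a Euclidean factor in the local geometry of $X$ at some vertex, and then to globalize this into a product splitting of a full subcomplex. The main tool is the link structure: each $\lk(v)$ is an all-right spherical CAT(1) complex, and $\phi$ is cubical precisely when it sends vertices to vertices and each induced $d\phi: \lk(v) \to \lk(\phi(v))$ is simplicial.

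First I would locate a vertex whose link admits a round sphere join factor. If $\phi$ sends some vertex $v$ into the interior of a $k$-cube $C$ with $k \geq 1$, then $\lk(\phi(v)) \cong S^{k-1} \ast \lk(C)$ already has a round $S^{k-1}$ factor, which is transferred to $\lk(v)$ by the isometry $d\phi$. Otherwise $\phi$ preserves the vertex set but some $d\phi$ fails to be simplicial, and I would invoke rigidity of all-right spherical CAT(1) complexes: their spherical-join-indecomposable factors are either combinatorially rigid (all self-isometries simplicial) or round spheres, so a non-simplicial isometry forces a round $S^{k-1}$ join factor in $\lk(v)$ for some $k \geq 1$. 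In either case, coning yields a local splitting of a neighborhood of $v$ as $\Cone(L_0) \times \R^k$, and the cubes in $\st(v)$ factor compatibly as $C_0 \times [0,1]^k$.

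The main technical step is to globalize this local factorization to a convex subcomplex of the form $Y \times \R^k$. My approach is to propagate the $\R^k$ direction from $v$ to adjacent vertices using the local product structure on $\st(v)$, inductively enlarging the set of vertices whose links contain a matching round factor, and then taking $Z$ to be the union of all cubes among such vertices. The CAT(0) flat strip theorem applied to parallel $k$-flats through these vertices should endow $Z$ with a product metric $Y \times \R^k$; convexity of cubes and of hyperplane carriers (which are themselves product neighborhoods of hyperplanes) should ensure $Z$ is convex, hence full. This globalization is the main obstacle: tangent splittings do not extend to global product decompositions in arbitrary CAT(0) spaces, so the argument must exploit the combinatorial rigidity specific to cube complexes, particularly the way hyperplanes parallel to and transverse to the $\R^k$ direction constrain one another.

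For the standard-cubulation claim when $k \neq 2$: the case $k = 1$ is immediate, since any cubulation of $\R$ by unit intervals is the standard one. For $k \geq 3$ the all-right spherical triangulation of the round $S^{k-1}$ appearing as the vertex link in the $\R^k$ factor must be the boundary of the $k$-orthoplex, because tilings of the round sphere by $\pi/2$-orthants correspond to orthonormal bases; this pins down the standard cubulation of $\R^k$. The dimension $k = 2$ is genuinely exceptional, as acknowledged by the theorem, because shearing constructions on $\R^2$ can produce non-standard cubulations whose vertex links are still the all-right round $S^1$.
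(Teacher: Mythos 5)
There is a genuine gap, and it sits exactly at the subtle point of the theorem. Your Case 2 rests on an asserted rigidity statement for all-right spherical CAT(1) complexes: that every spherical-join-indecomposable factor is either combinatorially rigid or a round sphere, so that a non-simplicial link isometry forces a round $S^{k-1}$ join factor. This is false. The circle $\Sa^1(k)$ of length $k\pi/2$ with $k\geq 5$, triangulated by arcs of length $\pi/2$, is join-indecomposable, is not a round sphere, and admits rotations through arbitrary angles, none of which are simplicial. Such circles really occur as vertex links here --- they are the links of singular cone points, and the branched-cover surface examples in \S 6 of the paper show that non-cubical isometries can arise precisely from them. Your argument would therefore wrongly force a standard Euclidean factor (round links) in every case, missing the singular $\R^2$ possibility that the theorem explicitly allows; relatedly, your description of the $n=2$ exception is off: a square complex structure on $\R^2$ in which every vertex link is the round $S^1$ is already standard, and the true exceptional objects are singular cone metrics with links $\Sa^1(k)$, $k\geq 5$, not ``sheared'' cubulations. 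The paper's Proposition \ref{LinkStructure} is exactly the replacement for your false lemma: a volume/counting argument shows that a spherical $(n-1)$-manifold built from regular all-right simplices must be the round sphere once $n-1\geq 2$, which is why only $n=2$ is exceptional; no such statement can simply be invoked.

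The second gap is the globalization, which you acknowledge is the main obstacle but leave as a sketch. Propagating a local splitting at $\st(v)$ to neighboring vertices and citing the flat strip theorem and convexity of hyperplane carriers does not produce the subcomplex $Y\times\R^n$: tangent-space joins do not a priori control the simplicial structure of neighboring links, and the $\R^n$ piece being built is in general not flat (it may be a singular cone metric), so the flat strip theorem does not even apply to it. The paper's mechanism is different and is where all the work lies: it encodes the non-cubical isometry locally as a Euclidean affine map (the trace), shows via Lemmas \ref{sec:Trace:lem:Orthodecomp}--\ref{GeneralTransverse} that transversality to the cubical hypersurfaces forces lower-dimensional cubes to map into interiors of higher-dimensional ones, and then develops this trace cube-by-cube (Proposition \ref{FindingFlats}, Theorem \ref{GeneralFindingFlats}) to build $K_C$, finally extending across the hyperplane dual to the non-cubical directions to obtain $Z=Y\times\R^{n_0}\times\cdots\times\R^{n_k}$ (Theorem \ref{FlatDecomp}). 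Your Case 1 observation that $\lk(\phi(v))\cong \Sigma_{k-1}\ast\lk(C)$ is consistent with the paper's starting point, and your orthoplex argument for $k\geq 3$ parallels the easy direction of the paper's spherical-manifold lemma, but as written the proposal neither establishes the dichotomy it relies on nor supplies the development argument that makes the product decomposition global.
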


When $n=2$ above, it is possible that the Euclidean factor has a singular metric--the cube complex structure is built from squares with their edges paired, but there may be more than four squares appearing at a vertex. Such singular metrics arise naturally from locally CAT(0) cube complex structures on surfaces of genus $g\geq2$. In fact, if $X=\R^2$ is a CAT(0) cube complex with a singular metric and cocompact quotient, then $X$ is quasi-isometric to the hyperbolic plane $\Hy^2$. Since metrics on hyperbolic surfaces are generally more rigid than on tori, one might hope to show that $\Aut(X)=\Isom(X)$ when $X$ is cocompact.  In \S 6, we produce examples of singular cube complex structures on compact hyperbolic surfaces whose automorphism groups are strictly smaller than their isometry groups.  

On the other hand, as a corollary of the above theorem, we obtain a kind of rigidity for $\delta$-hyperbolic CAT(0) cube complexes.  

\begin{maintheorem} \label{main2}Let $X$ be $\delta$-hyperbolic, cocompact and 1-ended.  If $\Aut(X)\subsetneq \Isom(X)$, then $X$ is quasi-isometric to $\Hy^2$.
\end{maintheorem}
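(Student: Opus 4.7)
The plan is to apply Theorem~\ref{main1} and use $\delta$-hyperbolicity to severely constrain the resulting product decomposition. I will first invoke Theorem~\ref{main1} to obtain a full convex subcomplex $Z \cong Y \times \R^n$ of $X$ with $n \geq 1$, isometrically embedded in $X$. Since $X$ is $\delta$-hyperbolic, it admits no quasi-isometrically embedded Euclidean plane, so the case $n \geq 2$ with the standard cubulation on $\R^n$ is ruled out, as it would produce a Euclidean flat of dimension $\geq 2$ in $X$. By the final sentence of Theorem~\ref{main1}, this rules out all $n \geq 3$ and the standard case $n = 2$, leaving only $n = 1$, or $n = 2$ with a singular cubical metric on the $\R^2$-factor.

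Next I will show that the factor $Y$ must be bounded. For distinct vertices $y, y' \in Y$, the parallel copies $\{y\}\times\R^n$ and $\{y'\}\times\R^n$ are isometrically embedded in $X$ with identical limit sets on $\partial_\infty X$; in a $\delta$-hyperbolic space, two quasi-isometrically embedded subspaces sharing a common limit set lie at uniformly bounded Hausdorff distance, so $\mathrm{diam}(Y) < \infty$. Consequently $Z$ is quasi-isometric to its $\R^n$-factor. In the case $n = 2$ with singular metric, the remark after Theorem~\ref{main1} identifies this factor as quasi-isometric to $\Hy^2$; since $Z$ is a convex subcomplex of the 1-ended, cocompact, $\delta$-hyperbolic $X$, a boundary-filling argument (the Gromov boundary $\partial_\infty Z$ is a circle, and by cocompactness must exhaust the connected $\partial_\infty X$) yields that $X$ itself is quasi-isometric to $\Hy^2$.

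The delicate case, and what I expect to be the main obstacle, is $n = 1$. Here $Z$ is quasi-isometric to $\R$, and one must either rule out this case directly using cocompactness and 1-endedness of $X$, or show that the non-cubical isometry forces enough additional structure on $\Isom(X)$ to identify $X$ with $\Hy^2$. The latter approach proceeds by observing that the non-cubical isometry translates the $\R$-factor by a non-integer amount, and that combined with the integer translations from $\Aut(X)$, the closure in the compact-open topology produces a non-discrete group of translations along the $\R$-factor; one then applies a structure theorem for locally compact groups acting properly and cocompactly on non-elementary $\delta$-hyperbolic spaces (in the spirit of Caprace--Cornulier--Monod--Tessera) to deduce that $X$ is quasi-isometric to a rank-1 symmetric space, at which point the finite-dimensional cube complex and 1-endedness hypotheses select only $\Hy^2$.
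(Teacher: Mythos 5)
Your reduction to $n=1$ or $n=2$ (singular) and your bounded-$Y$ argument are fine, but both remaining cases have genuine gaps, and both are resolved in the paper by information that Theorem \ref{main1} alone does not provide: the proof there runs through the refined statement, Theorem \ref{FlatDecomp}, whose parts (3) and (4) describe how $Z$ sits inside $X$. For $n=2$, your ``boundary-filling'' step is the problem: knowing that $Z$ is a convex subcomplex with $\partial_\infty Z$ a circle inside the connected $\partial_\infty X$ does not force $\partial_\infty Z=\partial_\infty X$, nor $X$ to be quasi-isometric to $\Hy^2$. A quasi-convex, convex plane-like subcomplex inside a cocompact $1$-ended hyperbolic space with larger boundary (think of a convex surface-like subcomplex inside a complex quasi-isometric to $\Hy^3$) has exactly this configuration, so ``cocompactness must exhaust $\partial_\infty X$'' is not a valid inference. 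What is actually needed, and what the paper proves, is that $X$ lies in a bounded neighborhood of $Z$: by Theorem \ref{FlatDecomp}(3)--(4), either there is no branching, in which case $Z=X=Y\times\R^2_{\mathrm{sing}}$ outright, or branching occurs only along $Y\times\Lambda$ with $\Lambda$ a set of vertices in the $\R^2$-factor; since $Y$ is compact, an extra unbounded complementary component at some $Y\times\{\lambda\}$ would (repeated at the cobounded set of branch points) give $X$ infinitely many ends, contradicting $1$-endedness. Only then does Proposition \ref{ClassifySingularCone}, together with cocompactness (bounded cone angles) and hyperbolicity (cobounded cone points), give $X$ quasi-isometric to $\Hy^2$. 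Also note that the quoted remark in the introduction assumes the singular plane itself has a cocompact quotient, which is not something you have verified for the factor in isolation.

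For $n=1$ you correctly flag the obstacle but do not close it, and your proposed repair is not sound as stated: there is no reason $\Aut(X)$ contains integer translations along the $\R$-factor of $Z$ (nothing in the hypotheses gives cocompactness of $\Aut(X)$ or even a single nontrivial automorphism translating that line), so the ``non-discrete closure of translations'' and the appeal to structure theory of locally compact groups acting on hyperbolic spaces have no footing; moreover that machinery would be far heavier than what is needed. The paper's resolution is elementary once one has Theorem \ref{FlatDecomp}(3): if the non-cubical isometry is non-cubical in a $1$-dimensional direction, then $X$ itself splits globally as $X=X_0\times\R$; hyperbolicity forces $X_0$ to be bounded, so $X$ is two-ended, contradicting $1$-endedness. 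So the missing ingredient in both cases is the same: you need the global positioning of $Z$ in $X$ (global splitting when $n_0=1$, and the precise branch locus $Y\times\Lambda$ with $Z$ the closure of a complementary component otherwise), which Theorem \ref{main1} as stated does not supply.
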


One reason to be interested in the full isometry group of a CAT(0) cube complex rather than just its automorphism group is the following conjecture of Ballmann--Buyalo \cite{BB08}:
\begin{conjecture}[Rank-rigidity] Let $X$ be a geodesically complete CAT(0) space and $\Gamma\leq \Isom(X)$ a lattice. If $X$ is irreducible, then $X$ is either a higher rank symmetric space, or a Euclidean building of dimension $n\geq 2$, or $\Gamma$ contains a rank-1 isometry. 
\end{conjecture}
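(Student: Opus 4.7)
The plan is to mount an argument along the lines of Ballmann's original rank-rigidity theorem for Hadamard manifolds, suitably adapted to the CAT(0) setting via the framework of Ballmann--Brin and Kleiner--Leeb. The overall strategy is to establish the contrapositive: if $\Gamma$ contains no rank-1 isometry, then every axis of a hyperbolic element of $\Gamma$ must bound a flat half-plane, and from this one aims to deduce that $X$ itself has enough flats to be forced into one of the two rigid alternatives (higher-rank symmetric space or Euclidean building).

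First I would fix a rank-1-free lattice $\Gamma \leq \Isom(X)$ and exploit the dynamics of $\Gamma$ on the visual boundary $\partial X$. For any hyperbolic $\gamma \in \Gamma$ with axis $\ell$, the non-existence of rank-1 isometries forces the endpoints of $\ell$ at infinity to fail to form a visibility pair in the Ballmann sense, and the standard asymptotic/limit-set argument then produces a flat half-plane bordered by $\ell$. Cocompactness of $\Gamma$ together with geodesic completeness of $X$ then allows me to propagate the conclusion along the $\Gamma$-orbit and across limits, yielding that \emph{every} geodesic in $X$ bounds a flat half-plane.

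Next I would pass to the Tits boundary $\partial_T X$. Using irreducibility of $X$, together with the abundance of flats produced above, I would attempt to show that $\partial_T X$ carries the structure of a thick irreducible spherical building, following Leeb's characterization (the round-sphere case is ruled out by irreducibility combined with the $\mathrm{rank}\geq 2$ hypothesis implicit in the conclusion). Once the building structure on $\partial_T X$ is in hand, Kleiner--Leeb rigidity identifies $X$ metrically with either a higher-rank Riemannian symmetric space (in the non-discrete Coxeter type) or a Euclidean building (in the discrete type), completing the trichotomy.

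The main obstacle lies squarely in the transition from \emph{every geodesic bounds a flat half-plane} to \emph{$\partial_T X$ carries a spherical building structure}. In the Riemannian setting this is supplied by Berger's holonomy theorem after one checks that parallel transport along flat strips is highly restricted; no such smooth infinitesimal machinery is available for a general geodesically complete CAT(0) space, and one must instead glue flat half-planes into full flats and then into apartments combinatorially, verifying the building axioms on $\partial_T X$ by hand. This is precisely the step where the conjecture remains open in full generality: without further structure on $X$ (a cubical structure, a piecewise-Euclidean metric, or a smooth metric) the passage from flat half-planes to genuine apartments is not known. Any proof I could sketch here would therefore be conditional on this flats-to-apartments step, and I would realistically expect to establish the conjecture only under a supplementary hypothesis—e.g.\ that $X$ is a CAT(0) cube complex, where hyperplanes substitute for the missing infinitesimal structure and allow one to invoke the Caprace--Sageev machinery in combination with Theorem~\ref{main1} to upgrade from $\Aut(X)$ to $\Isom(X)$.
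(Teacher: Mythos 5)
The statement you were asked to prove is not actually proven in the paper: it is quoted verbatim as the Ballmann--Buyalo rank-rigidity \emph{conjecture} \cite{BB08}, and it remains open for general geodesically complete CAT(0) spaces. The paper's own contribution is only the special case in which $X$ is isometric to a CAT(0) cube complex (the ``Unrestricted Rank Rigidity'' corollary), and that case is obtained not by Ballmann-style boundary dynamics but by a purely structural reduction: Theorem~\ref{main3} shows that any lattice $\Gamma\leq\Isom(X)$ either contains a finite-index subgroup of $\Aut(X)$ --- in which case the Caprace--Sageev theorem applies directly --- or $X$ splits off a Euclidean factor, which is excluded by irreducibility. Theorem~\ref{main3} in turn rests on Theorem~\ref{FlatDecomp} (the technical form of Theorem~\ref{main1}), which locates a product subcomplex $Y\times\R^n$ whenever a non-cubical isometry exists.

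Your proposal is therefore not a proof, and to your credit you say so: the step you flag --- passing from ``every geodesic bounds a flat half-plane'' to a thick spherical building structure on $\partial_T X$ so that Kleiner--Leeb rigidity can be invoked --- is precisely where the conjecture is open, and no amount of cocompactness or geodesic completeness currently substitutes for the missing infinitesimal (holonomy-type) machinery available in the Riemannian case. Even the first step needs more care than your sketch suggests: producing a flat half-plane along every geodesic from the mere absence of rank-1 elements in $\Gamma$ uses density of axes and a limiting argument on half-planes that requires properness and a careful treatment of $\Gamma$-periodic versus arbitrary geodesics. Your closing remark, that one should expect a proof only under a supplementary hypothesis such as a cubical structure, is essentially what the paper delivers; but note that the correct mechanism there is Theorem~\ref{main3} (every lattice in $\Isom(X)$ is virtually cubical unless a Euclidean factor splits off), rather than a direct upgrade of Caprace--Sageev via Theorem~\ref{main1} alone, since one must handle lattices that are not even virtually contained in $\Aut(X)$.
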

A \textit{rank-1 isometry} is a hyperbolic isometry none of whose axes bounds a Euclidean half-plane. In \cite{CaSa11}, Caprace--Sageev, among many other results, proved the rank-rigidity conjecture in the case that $X$ is a CAT(0) cube complex and $\Gamma\leq \Aut(X)$:
\begin{thm}[Caprace--Sageev \cite{CaSa11}] Let $X$ be a geodesically complete CAT(0) cube complex and $\Gamma\leq\Aut(X)$ a lattice. If $X$ is irreducible, then $\Gamma$ contains a rank-1 isometry.
\end{thm}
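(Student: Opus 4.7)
The plan is to exploit the combinatorics of hyperplanes in $X$, following the principle that rank-$1$ behavior is forced by the existence of \emph{strongly separated} hyperplanes, i.e., pairs of disjoint hyperplanes $h,h'$ such that no hyperplane of $X$ is transverse to both. First I would reduce to the case where $\Gamma$ acts \emph{essentially} on $X$: for every halfspace $\mathfrak{h}$, both $\Gamma\cdot\mathfrak{h}$ and $\Gamma\cdot\mathfrak{h}^c$ are unbounded. If this fails, passing to the essential core yields a $\Gamma$-invariant convex subcomplex on which the action is essential, while preserving geodesic completeness, cocompactness, and irreducibility.

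The second ingredient is a Double Skewering Lemma: for any nested pair of halfspaces $\mathfrak{h}_1\subsetneq \mathfrak{h}_2$, there exists $g\in\Gamma$ with $g\mathfrak{h}_2\subsetneq \mathfrak{h}_1$. Equivalently, any two disjoint hyperplanes admit a common skewering hyperbolic element of $\Gamma$. I would prove this by combining essentiality with a ping-pong argument on halfspaces: essentiality produces elements pushing deep into both sides of any hyperplane, and a product of two such elements delivers the required skewering.

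The third and most substantive step is to use irreducibility to locate a strongly separated pair. Suppose for contradiction that no such pair exists; then every pair of disjoint hyperplanes $h,h'$ admits a third hyperplane transverse to both. Combined with the Flipping Lemma (essentiality lets one flip any halfspace by a group element), this propagates transversality across the whole hyperplane poset and produces two $\Gamma$-invariant, mutually transverse families of hyperplanes partitioning the hyperplane set of $X$. Dualizing, this yields a nontrivial cubical product decomposition $X = X_1\times X_2$, contradicting irreducibility. I expect this step---upgrading the combinatorial failure of strong separation to a genuine $\Gamma$-invariant product splitting of $X$---to be the main obstacle, since one must control both the hyperplane interactions and the $\Gamma$-action simultaneously.

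Finally, given a strongly separated pair $h,h'$, apply Double Skewering to obtain $g\in\Gamma$ skewering both. The translates $\{g^n h\}_{n\in\Z}$ are then pairwise strongly separated, which forces any combinatorial axis of $g$ to be rank-$1$: if some axis bounded a flat Euclidean half-plane, the hyperplanes dual to translations parallel to the boundary would be transverse to infinitely many $g^n h$, contradicting strong separation. Hence $\Gamma$ contains a rank-$1$ element, completing the proof.
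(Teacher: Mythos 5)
This theorem is not proved in the paper at all: it is quoted verbatim from Caprace--Sageev \cite{CaSa11}, so there is no ``paper proof'' to compare against other than the original one. Your outline is, in fact, the skeleton of the actual Caprace--Sageev argument: reduce to an essential action, prove the Double Skewering Lemma, show that irreducibility (plus essentiality) forces the existence of a strongly separated pair of hyperplanes, and then verify that an element double-skewering a strongly separated pair is rank~1 because a flat half-plane bounded by an axis would produce a single hyperplane transverse to two distinct strongly separated translates $g^n\mathfrak{h}$, $g^m\mathfrak{h}$. One simplification you are entitled to: under the stated hypotheses (geodesically complete, cocompact lattice) the action is automatically essential, so you can avoid the essential-core reduction entirely --- which is just as well, since your claim that the core inherits geodesic completeness and irreducibility is not obvious and would itself need proof.

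The genuine gap is exactly where you suspect it, and it is larger than your sketch suggests. From ``every pair of disjoint hyperplanes is crossed by a third'' it does not follow formally that transversality ``propagates'' to a partition of the hyperplane set into two nonempty families with \emph{every} member of one transverse to \emph{every} member of the other, which is what the product criterion requires; the condition you start from only provides, for each disjoint pair, \emph{some} common transversal. Bridging that distance is the heart of \cite{CaSa11}: their proof of the strong-separation statement is an induction on dimension using the Flipping and Double Skewering Lemmas, facing triples of hyperplanes, and the hypothesis that the group has no fixed point at infinity, and it does not proceed by directly exhibiting the invariant transverse partition you describe. As written, then, your proposal correctly identifies the architecture of the known proof and correctly executes the first, second, and fourth steps, but the third step is a restatement of the difficulty rather than an argument, so the proposal is incomplete precisely at the theorem's core.
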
  
While $\Gamma\leq \Aut(X)$ is the main case of interest for CAT(0) cube complexes, it leaves open the question of rank rigidity when lattices in $\Isom(X)$ are not cubical or even virtually cubical.  For instance, many lattices in $\Isom(\E^n)$ are not virtually cubical, although they are always virtually isomorphic to $\Z^n$, which can be realized as a cubical lattice. Of course, this is not a counterexample, since $\E^n$ is not irreducible for $n>1$ and when $n=1$ every infinite isometry has rank 1. Our final result shows that the existence of Euclidean factors is the only way that a CAT(0) cube complex can have a non-cubical cocompact lattice.  
\begin{maintheorem}\label{main3}Let $X$ be a CAT(0) cube complex and $\Gamma\leq \Isom(X)$ a lattice.  Either there exists a finite index subgroup $\Gamma'\leq \Gamma$ such that $\Gamma'\leq \Aut(X)$, or $X=\E^n\times Y$ for some subcomplex $Y$.
\end{maintheorem}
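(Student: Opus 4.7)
The plan is to argue the contrapositive: assume $[\Gamma : \Gamma \cap \Aut(X)] = \infty$ and deduce that $X$ splits cubically as $\E^n \times Y$ with $Y$ a subcomplex. The assumption produces some $\gamma \in \Gamma \setminus \Aut(X)$, hence $\Aut(X) \subsetneq \Isom(X)$, so Theorem~\ref{main1} furnishes a full subcomplex $Z \subset X$ of the form $W \times \R^n$ with $n \geq 1$.

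The first step is to check that the $\R^n$-factor of $Z$ can be taken to have the standard cubing, so that it is genuinely isometric to $\E^n$. For $n \neq 2$ this is part of Theorem~\ref{main1}. For the delicate $n = 2$ case in which the cubing of $\R^2$ is singular, I expect that $\Isom(X)/\Aut(X)$ is then forced to be discrete (the hyperbolic surface examples of \S 6 suggest that singular Euclidean factors contribute only finitely many cosets of $\Aut(X)$ in $\Isom(X)$), which together with the lattice hypothesis should give $[\Gamma : \Gamma \cap \Aut(X)] < \infty$, contradicting the assumption. Thus one may assume the local Euclidean factor is $\E^n$ with its standard cubing.

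The main step is to globalize this local Euclidean factor using the cocompact action of $\Gamma$. The $\R^n$-factor of $Z$ determines a parallel class of $n$-flats in $X$, and since $\Gamma$ acts cocompactly its translates of $Z$ cover $X$ up to bounded error. Because $\Gamma$ contains infinitely many cosets of non-cubical elements, each of which contributes its own local Euclidean factor via Theorem~\ref{main1}, the various local Euclidean directions should align into a single $\Gamma$-invariant parallel family of $n$-flats. The product decomposition theorem for CAT(0) spaces then yields a metric splitting $X = \E^n \times X_0$.

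Finally, the metric splitting must be upgraded to a cubical one. The hyperplanes of $X$, preserved setwise by $\Gamma \cap \Aut(X)$, separate into those parallel to the $\E^n$-factor and those transverse to it, producing a cube complex decomposition $X = \R^n \times Y$ with $\R^n$ in its standard cubing and $Y$ a subcomplex carrying the CAT(0) structure of $X_0$. The hardest part of the argument will be the globalization: showing that the potentially disparate local Euclidean directions associated to different non-cubical lattice elements all align into a single parallel distribution. This reconciliation hinges on the cocompactness of $\Gamma$ together with the structural information from the proof of Theorem~\ref{main1} about how non-cubical isometries act on their associated local Euclidean factors.
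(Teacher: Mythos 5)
There is a genuine gap at the step you yourself flag as the hardest: the ``globalization'' of the local Euclidean factor. From one non-cubical $\gamma\in\Gamma$, Theorem~\ref{main1} (or its technical form, Theorem~\ref{FlatDecomp}) gives only a full subcomplex $Z=Y\times\R^{n_0}\times\cdots\times\R^{n_k}$, possibly branched along a locus $Y\times\Lambda$ of vertices and hypersurfaces. Coarse density of $\Gamma$-translates of $Z$ does \emph{not} force the various local Euclidean directions to align into a parallel family of flats: a space branched along isolated vertices (a ``tree of flats''-type picture) is coarsely covered by translates of such pieces and admits no splitting at all. In exactly that situation the correct conclusion is the \emph{other} horn of the dichotomy --- $\Gamma$ is virtually cubical --- and your contrapositive set-up provides no mechanism to reach it: assuming $[\Gamma:\Gamma\cap\Aut(X)]=\infty$ does not make the alignment true, it only means you must derive a contradiction in the branched case, which is precisely the content you have not supplied. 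The paper's proof handles this by induction on $\dim X$: it passes to the $\Gamma$-orbit of the branch locus, shows by cocompactness that the branch set stabilizes after finitely many enlargements, and then, when branching is along isolated vertices, uses a finite-configuration/pigeonhole lemma (affinely independent integral branch points admit only finitely many configurations) to produce a uniform $N$ with $f^N$ cubical for every $f\in\Gamma$, yielding the finite-index alternative; only when there is no branching, or branching along a single parallel class of hypersurfaces, does one get the global splitting $X=X_0\times Y\times\R^{n_i}$, and there the splitting is read off directly from the developed subcomplexes rather than by upgrading a metric splitting via hyperplanes.

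The $n=2$ singular case is also not dispatched by your remark that $\Isom(X)/\Aut(X)$ ``should be discrete.'' That quotient is not even a group in general, and discreteness would not by itself bound $[\Gamma:\Gamma\cap\Aut(X)]$. The paper needs an actual statement here, Lemma~\ref{FiniteIndexSing}: for a singular cone metric on $\R^2$ with at least two cone points, $\Aut$ has finite index in $\Isom$ (proved by a configuration argument on a geodesic triangle of cone points), while with a single cone point $\Isom\cong O(2)$ and the index is infinite, so one must also rule that case in or out using the lattice. This lemma, fed into the induction on dimension, is what replaces your heuristic appeal to the examples of \S 6; those examples illustrate the phenomenon but prove nothing about finiteness of the index for a given $\Gamma$.
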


Note that we do not assume geodesic completeness here.  As an immediate corollary, we extend the theorem of Caprace--Sageev to the full isometry group of a CAT(0) cube complex, settling the rank-rigidity conjecture for CAT(0) spaces isometric to a CAT(0) cube complex:
\begin{maincorollary}[Unrestricted Rank Rigidity] Let $X$ be an irreducible, geodesically complete CAT(0) space which is isometric to a CAT(0) cube complex and let $\Gamma\leq \Isom(X)$ be a lattice.  Then $\Gamma$ contains a rank-1 isometry.  
\end{maincorollary}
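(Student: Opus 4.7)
The plan is to combine Theorem \ref{main3} with the Caprace--Sageev rank-rigidity theorem quoted above. I would first apply Theorem \ref{main3} to the lattice $\Gamma\leq\Isom(X)$, which yields one of two alternatives: either some finite-index subgroup $\Gamma'\leq\Gamma$ is contained in $\Aut(X)$, or else $X$ splits as a product $X=\E^n\times Y$ for some $n\geq 1$ and some subcomplex $Y$. I would then treat the two cases separately.

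In the first case, $\Gamma'$ is finite index in the discrete cocompact group $\Gamma$, hence itself a discrete cocompact subgroup of $\Isom(X)$ acting by cubical automorphisms. Thus $\Gamma'$ is a cocompact lattice in $\Aut(X)$ in the sense of Caprace--Sageev. Since the hypotheses on $X$ (irreducible, geodesically complete, CAT(0) cube complex) depend only on $X$ and are preserved, their theorem produces a rank-1 isometry $\gamma\in\Gamma'\leq\Gamma$, finishing this case.

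In the second case, irreducibility of $X$ as a CAT(0) space forces the decomposition $X=\E^n\times Y$ to be degenerate: $Y$ must be a single point, so $X=\E^n$. Irreducibility then rules out $n\geq 2$, leaving $X=\E^1=\R$. Any cocompact lattice in $\Isom(\R)$ contains an infinite-order translation $\gamma$ with axis all of $\R$, and since $X$ is $1$-dimensional and therefore contains no Euclidean half-plane, $\gamma$ is automatically a rank-1 isometry.

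I do not expect any substantive obstacle here, as the corollary is essentially a bookkeeping reduction to the already-proven cubical case. The only point requiring care is confirming that the finite-index subgroup $\Gamma'$ inherits the lattice property from $\Gamma$ and that Caprace--Sageev's hypotheses remain valid; both are immediate.
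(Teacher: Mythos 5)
Your proposal is correct and follows exactly the route the paper intends: apply Theorem \ref{main3}, invoke Caprace--Sageev in the cubical case, and use irreducibility to reduce the Euclidean case to $X=\E^1$, where a translation is rank-1 (as the paper itself notes in the introduction). No substantive differences from the paper's (brief) argument.
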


\textbf{Outline:} In \S 2, we review some background on CAT(0) cube complexes, and in \S 3 we introduce singular cube complexes on $\R^2$ and characterize those which are quasi-isometric to $\Hy^2$. In \S4 we develop the main technical tool, and then in \S 5 we prove Theorem \ref{main1} and deduce Theorems \ref{main2} and \ref{main3} as corollaries. Finally in \S6 we give a family of compact examples of non-positively curved cube complexes with isometry group different from their automorphism group.

\textbf{Acknowledgements:} The idea for this paper came from conversations with Anne Thomas and Jason Manning during a summer school in geometric group theory at MSRI in 2015. I would like to thank them both for getting me started on it.  I am also grateful to Jason for suggesting the way to get the higher-genus surface examples in \S6 by taking branched covers. I would also like to thank Michah Sageev and Fr\'ed\'eric Haglund for suggesting some possible approaches to Theorem \ref{main1}, and Andy Putman for several useful discussions and comments.

%\begin{example} \label{sec:Intro:ex:Flat}$X=\R^n$.  In this case, $\Isom_{\text{CAT(0)}}(\R^n)=\Isom_{\text{Euc}}(\R^n)$.  The latter is isomorphic to the semidirect product $\R^n\rtimes O(n)$, where the $\R^n$ factor is the group of translations and $O(n)$ is the $n$-dimensional orthogonal group. Any isometry of the $\ell^1$-metric, on the other hand, must preserve the standard $\Z^n$-lattice.  Thus, $\Isom_{\ell^1}(\R^n)\cong \Z^n\rtimes O(n,\Z)$.  The latter orthogonal group is just the collection of signed permutation matrices, a finite group of order $2^n\cdot n!$.  Note that in this case, $X$ has many compact quotients.  
%\end{example}

\section{Background on CAT(0) cube complexes}
Euclidean $n$-space will be denoted by $\E^n$.  We will often refer to $\R^n$ when we do not specify a metric structure, or refer to the vector space structure only. A Euclidean $n$-cube is the space isometric to $[0,1]^n\subset \E^n$.  A \textit{cube complex} is a metric space obtained by gluing together Euclidean $n$-cubes along their faces by isometries. The metric is the path metric induced by the Euclidean metric on each cube.

Let $X$ be a cube complex, and $v\in X^{(0)}$ a vertex.  The \textit{link} $\lk(v)$ is the boundary of a sufficiently small ball centered at $v$.  Since $X$ is a cube complex, $\lk(v)$ is naturally built out of simplices.  The link $\lk(v)$ is called \textit{flag} if every $(k+1)$-complete subgraph in the 1-skeleton $\lk(v)^{(1)}$ spans a $k$-simplex.  We will be interested in when the path metric on $X$ is CAT(0).  A theorem of Gromov tells us that this is completely determined by the links of vertices.

\begin{thm}[Gromov \cite{Gro87}] A simply connected cube complex $X$ is CAT(0) if and only if the link of each vertex of $X$ is a flag simplicial complex. 
\end{thm}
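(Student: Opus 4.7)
The plan is to break the equivalence into two standard geometric reductions and one combinatorial lemma about spherical complexes. First, by the Cartan--Hadamard theorem, a simply connected complete geodesic space is CAT(0) if and only if it is locally CAT(0), so for the ``if'' direction it suffices to establish local CAT(0). Second, at each vertex $v$, a small metric ball is isometric to an open Euclidean cone on the link $\lk(v)$, where $\lk(v)$ carries the induced \emph{all-right spherical} structure (each simplex isometric to a sector of a unit sphere with edge lengths $\pi/2$). By Berestovskii's theorem, the Euclidean cone on a metric space $L$ is CAT(0) if and only if $L$ is CAT(1). Combining these two reductions, the whole problem is equivalent to the key lemma: an all-right spherical complex is CAT(1) if and only if it is flag.

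For the ``only if'' direction, suppose $X$ is CAT(0). Small balls around $v$ are CAT(0), so by Berestovskii $\lk(v)$ is CAT(1). Now suppose a subset $\{w_0,\ldots,w_k\}$ of vertices of $\lk(v)$ were pairwise joined by edges but failed to span a $k$-simplex; take $k$ minimal with this property. In the base case $k=2$, the three edges of length $\pi/2$ form a closed loop of length $3\pi/2 < 2\pi$, and in a CAT(1) space such a loop must bound a singular disc of diameter at most $3\pi/4$, forcing a 2-simplex that is absent by hypothesis. The inductive step is handled by passing to $\lk(w_0,\lk(v))$, which is again all-right spherical and CAT(1), and where the images of $\{w_1,\ldots,w_k\}$ violate flagness with $k$ reduced by one.

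For the ``if'' direction, suppose $X$ is simply connected and every link is flag. The interior of each cube is flat Euclidean, so local CAT(0) is automatic away from vertices; at a vertex $v$, the cone reduction reduces the problem to proving that the flag all-right spherical complex $L := \lk(v)$ is CAT(1). I would prove this by induction on $\dim L$. The dimension-zero case is a discrete set, trivially CAT(1). For the inductive step, observe that for each simplex $\sigma \subset L$ the link $\lk(\sigma,L)$ is again a flag all-right spherical complex of strictly smaller dimension, so by induction it is CAT(1). By a second application of Berestovskii (in spherical form: the spherical cone on a CAT(1) space of diameter $\leq\pi$ is CAT(1)), small neighborhoods of points of $L$ are CAT(1), i.e.\ $L$ is locally CAT(1). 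To globalize, I would invoke the Ballmann--Brin / Bowditch local-to-global criterion: a complete locally CAT(1) space is CAT(1) provided it admits no closed geodesic of length less than $2\pi$. A putative closed geodesic of length $<2\pi$ in $L$ projects to a short combinatorial cycle in $L^{(1)}$ not filled by any simplex, directly contradicting the flag condition. Cartan--Hadamard then lifts local to global CAT(0) on $X$.

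The main obstacle is the local-to-global step for CAT(1), which is strictly subtler than the CAT(0) analogue because the CAT(1) condition only controls triangles of perimeter less than $2\pi$. The work lies in translating ``flag'' into the statement ``no short closed geodesics,'' which requires showing that any candidate short loop in $L$ can be straightened into an actual geodesic supported on $L^{(1)}$ whose combinatorial image forces a missing simplex. This translation is the technical heart of the argument and where I would spend the bulk of the effort; the remaining pieces (Cartan--Hadamard and Berestovskii) are cited black boxes.
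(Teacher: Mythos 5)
This statement is quoted in the paper as background (attributed to Gromov) and is never proved there, so there is no internal argument to compare against; your outline follows the standard route in the literature (Cartan--Hadamard, Berestovski\u{\i}'s cone criterion, and Gromov's lemma that an all-right spherical complex is CAT(1) if and only if it is flag), which is the right architecture. However, the step you yourself identify as the technical heart is where your plan, as described, would fail. A closed geodesic of length $<2\pi$ in an all-right spherical complex does \emph{not} ``project to a short combinatorial cycle in $L^{(1)}$,'' nor can it in general be straightened into a geodesic supported on the $1$-skeleton: short closed geodesics in spherical complexes typically run through the interiors of top-dimensional simplices, and flagness does not directly forbid short edge-cycles (a flag complex can have plenty of combinatorial $4$-cycles; these are exactly the cycles that do \emph{not} contradict flagness, and the metric statement one needs is that such cycles have length $\geq 2\pi$, not that they are absent). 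The actual proof of the key lemma (Gromov; see Bridson--Haefliger II.5.18, or Bowditch's notes) is an induction on dimension that analyzes how a shortest locally geodesic loop meets the open star of a vertex $v$: one develops each arc of intersection onto the round sphere to see it has length exactly $\pi$ and endpoints in the link of $v$, then uses the inductive CAT(1) property of $\lk(v)$ (which is again flag and all-right) to replace or push the arc off the star, either shortening the loop or forcing total length $\geq 2\pi$. Without some version of this argument, ``flag $\Rightarrow$ no closed geodesic of length $<2\pi$'' is unsupported, and the ``if'' direction does not close.

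Two smaller points in the ``only if'' direction. First, the $3\pi/2$ edge-loop coming from an empty triangle must be shown to be a \emph{local} geodesic before it contradicts CAT(1): at each corner $w_i$ the incoming and outgoing directions are vertices of $\lk(w_i)$ that are non-adjacent precisely because the $2$-simplex is missing, and non-adjacent vertices of an all-right spherical complex are at distance $\geq\pi$, so the angles are $\geq\pi$. Second, the contradiction is cleaner than your ``singular disc'' claim: cut the loop at two points into arcs of length $3\pi/4<\pi$; each is a local geodesic of length $<\pi$, hence a genuine geodesic, and you get two distinct geodesics between points at distance $<\pi$, contradicting uniqueness of such geodesics in a CAT(1) space. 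With those repairs, and with the induction through links of simplices (which are again all-right and flag) as you indicate, the argument becomes the standard complete proof.
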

%A cube complex will be called \textit{nonpositively curved} (NPC) if its universal cover is CAT(0).  

We will also need a general definition of a link of $n$-cube in a CAT(0) cube complex $X$. Let $C\subseteq X$ be a cube of dimension $n$.  Note that $C$ is isometrically embedded in $X$.  Suppose $C$ is contained in cubes $D_1,\ldots, D_k$.  The cubes $\{D_i\}$ define a poset by inclusion, and we can consider the simplicial realization $L'$ of this poset. As constructed, since each of the $D_i$ contribute vertices to $L'$, we have that $L'$ is actually the barycentric subdivision of a complex $L$, whose vertices are the $D_i$ whose dimension is $n+1$. Since $X$ is CAT(0), by Gromov's link condition the complex $L$ is actually flag simplicial.  
\begin{definition}The simplicial complex $L=\lk(C)$ defined above is called the \textit{(ascending) link of $C$}. A cube $C$ is called \textit{locally maximal} if $\lk(C)=\emptyset$.
\end{definition}

If $C=v$ is a vertex, clearly the ascending link defined as above is just the usual link $\lk(v)$.  Given a point $p\in C$, the link $\lk(p)$ naturally comes equipped with a simplicial structure isomorphic to $\Sigma_{n-1}*L$, where $*$ indicates a join and $\Sigma_{k}$ is the simplicial structure on the $k$-sphere $S^k$ given as the $k$-fold join of $S^0$, a disjoint union of two points. By convention $\Sigma_0$ is the empty set. In particular, we observe that if $p,q$ lie in the interior of the same cube $C$, then $\lk(p)\cong\lk(q)$.  

\subsection{Generalities on isometry groups of CAT(0) spaces} General results of Caprace--Monod \cite{CaMo09} state that the a CAT(0) space $X$ decomposes as a product $X=X_1\times \cdots\times X_n\times \E^k$, where the $X_i$ are irreducible in the sense that they do not decompose as a product, and none of them are Euclidean.  We get a corresponding decomposition of isometry groups as follows. Define $\Isom_0(X)$ to be the subgroup of $\Isom(X)$ which preserves each irreducible factor. Then $[\Isom(X):\Isom_0(X)]<\infty$ and \[\Isom_0(X)=\Isom(X_1)\times\cdots\times \Isom(X_n)\times\Isom(\E^k).\]In fact, $\Isom_0(X)$ is normal in $\Isom(X)$ and the quotient is just the finite group of permutation of isomorphic $X_i$ factors.  

If $X$ is a finite-dimensional CAT(0) cube complex, then there are only finitely many isomorphism types of cubes appearing in $X$ hence $X$ is an $M_\kappa$-polyhedral complex in the sense of \cite{BH99}, with $\kappa=0$. In this case, geodesic completeness of $X$ is equivalent to not having free faces (\cite{BH99}, Proposition 5.1). If $X$ is geodesically complete and $\Gamma\leq \Aut(X)$ is a cocompact lattice, then the normalizer $N(\Gamma)$ is a Lie group with finitely many connected components and $\Isom(X/\Gamma)\cong N(\Gamma)/\Gamma$.  Moreover, the identity component is a torus of rank equal to the rank of the center $Z(\Gamma)$ (\cite{BH99}, Theorem 6.17). Thus, if $\Gamma$ is centerless, $\Isom(X/\Gamma)$ is finite and therefore $\Aut(X/\Gamma)$ automatically has finite index in $\Isom(X/\Gamma)$.  If $\Gamma\leq \Isom(X)$ is any lattice, however, one does not know in general whether $\Gamma$ contains a finite-index subgroup of automorphisms, especially when $X$ is not geodesically complete.  

Regarding the relationship between lattices in the automorphism group of a CAT(0) cube complex and its geometry, much more is known.  Many general results such as the Tits alternative and the existence of non-trivial quasimorphisms were proven by Caprace--Sageev \cite{CaSa11}. Infinite lattices in $\Aut(X)$ are also known not to have Property (T) \cite{NR97}, since from a cocompact action of $\Gamma$ on $X$ one can build an action of $\Gamma$ on a $\R$-Hilbert space without an unbounded orbit. Some results concerning the relation between the automorphism group and the isometry group are known in the case of abelian subgroups.  Haglund \cite{Ha07} has proven that any proper action of $\Z$ on a CAT(0) cube complex $X$ preserves a combinatorial geodesic, \textit{i.e.} $\Z$ acts as translation along a preserved axis in the 1-skeleton.  More generally, Woodhouse \cite{W17} shows that a proper action of $\Z^n$ on $X$ preserves a cocompact, convex subcomplex quasi-isometric to $\R^n$.

\section{Singular cone metrics on $\R^2$} As alluded to in the introduction, in addition to $\E^n$ there is another basic example of a CAT(0) cube complex which admits non-cubical isometries.  Topologically, this space is homeomorphic to $\R^2$, but it is not isometric to $\E^2$. Before giving the example, we need to define the notion of a cone point. 
\begin{definition} A vertex $v$ in an NPC cube complex $X$ is said to be a \textit{cone point of order $n\geq 4$} if its link $\lk(v)$ is a polygon with $n$ sides. We call $v$ \textit{singular} if $n\geq 5$.
\end{definition}

\begin{example} $X=\Cone(\R^2,n)$.  We define $\Cone(\R^2,n)$ to be the 2-dimensional cube complex constructed as follows. Take $n$ quarter planes and glue them together in pairs cyclically, so that any two adjacent quarter planes share an infinite ray. The resulting cube complex, $\Cone(\R^2,n)$ is homeomorphic to $\R^2$ and if $n\geq 5$ has a single singular vertex, while at every other point the metric is locally Euclidean. See Figure \ref{fig:Cone} for an illustration.    
\begin{figure}[h]
\includegraphics[width=4in]{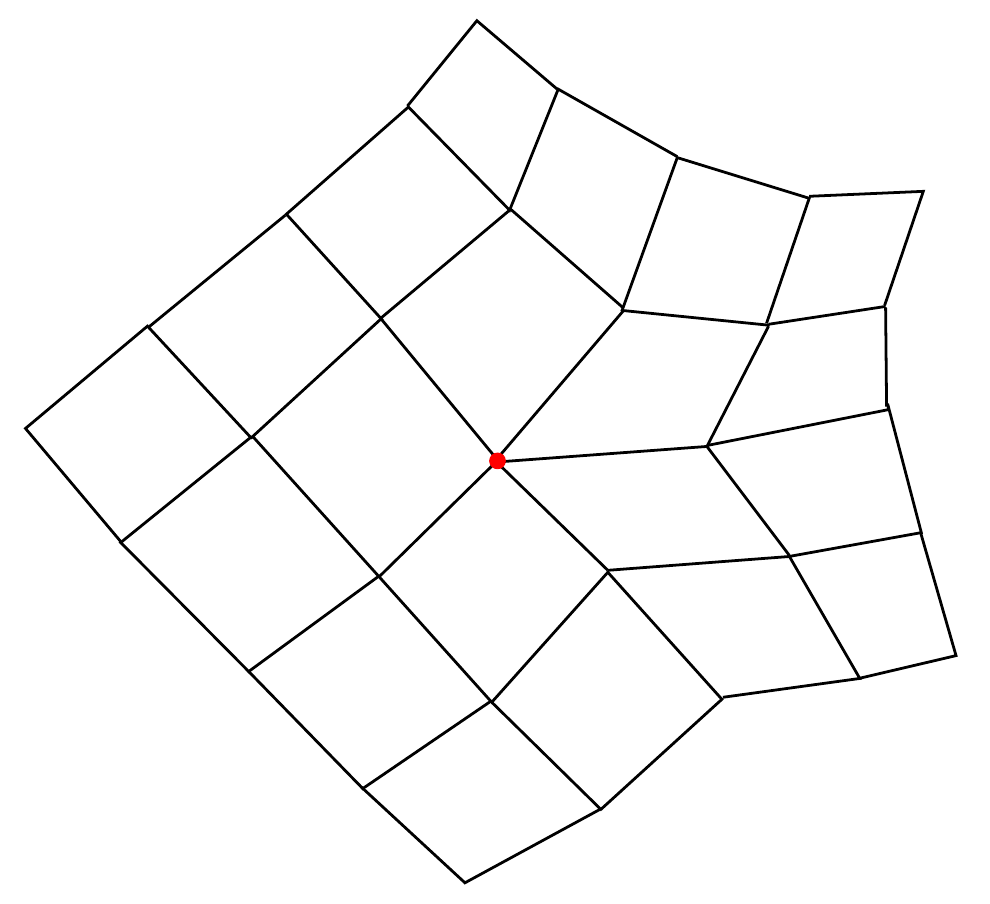}
\caption{A neighborhood of the cone point in $\Cone(\R^2,5)$.  All adjacent vertices have a locally Euclidean neighborhood.}
\label{fig:Cone}
\end{figure}

The isometry group of $X=\Cone(\R^2,n)$, must fix the unique singular vertex, but as every other point is locally Euclidean this is really the only restriction.  Therefore, $\Isom(X)\cong O(2)$. The cubical automorphism group must also preserve the singular vertex, and permute the squares incident to it. Hence, $\Aut(X)\cong D_{2n}$, the dihedral group of order $2n$. The link of the cone point in $\Cone(\R^2,n)$ is isometric to a circle with arc length $\frac{\pi n }{2}$. We denote this circle by $\Sa^1(n)$, so that the standard circle with arc length $2\pi$ is $\Sa^1=\Sa^1(4)$.     
%Observe, however, that $X$ has no compact quotients.
\end{example}

More generally, a \textit{singular cone metric} on $\R^2$ is any CAT(0) cube complex built out of squares $[0,1]^2$ formed from a disjoint union of squares, where all edges are identified in pairs.  The CAT(0) requirement means that each vertex will be a cone of order at least 4. We require that there be at least one singular vertex.  As we will see, at least locally, a subcomplex isometric to either $\E^n$ or a singular cone metric on $\R^2$ is always present whenever a CAT(0) cube complex $X$ has an isometry which is not cubical. 

To end the section we give a characterization singular cone metrics on $\R^2$ which are quasi-isometric to the hyperbolic plane $\Hy^2$. It is likely that this characterization is known to the experts, but we prove it here in the special case of CAT(0) cube complexes, since we will need it in the sequel.  Let $X$ be a singular cone metric on $\R^2$.  First we characterize exactly when a singular cone metric is $\delta$-hyperbolic.  

%The visual boundary at any point $x_0\in X$ is the space of geodesic rays based at $x_0$, where two rays are equivalent if they stay at bounded distance from one another.  The visual boundaries at any two points of $X$ are homeomorphic, but not canonically.  For a singular cone metric on $\R^2$ the visual boundary is homeomorphic to $S^1$.  

%Before moving on to the next section, we briefly review some background on CAT(0) cube complexes which we will use in the sequel. 

\begin{lemma}\label{cobounded}Let $X$ be a singular cone metric on $\R^2$. $X$ is $\delta$-hyperbolic if and only if the set of cone points is cobounded in $X$.
\end{lemma}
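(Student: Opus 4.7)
The plan is a singular Gauss--Bonnet argument combined with a diameter estimate on the interior of a geodesic triangle.

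For the forward direction, I would argue by contrapositive. If the singular cone points of $X$ are not cobounded, there exist $p_n\in X$ and $R_n\to\infty$ with $B(p_n,R_n)$ containing no singular cone point. Every vertex in such a ball has link $\Sa^1$, so the ball is locally Euclidean; by the CAT$(0)$ condition together with simple connectivity, $B(p_n,R_n)$ is then isometric to a Euclidean disk of radius $R_n$. Inscribing an equilateral geodesic triangle in this disk yields a triangle in $X$ with inradius $\sim R_n/2$, so $X$ cannot be $\delta$-hyperbolic.

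For the backward direction, suppose every point of $X$ lies within $R$ of a singular cone point, and let $T$ be a non-degenerate geodesic triangle in $X$ with corners $v_1,v_2,v_3$ and interior angles $\alpha_1,\alpha_2,\alpha_3$. Since CAT$(0)$ geodesics are unique, the three sides meet only at the corners and $T$ bounds an embedded disk $D\subset X\cong\R^2$. Each singular cone point $c$ has total angle $\theta_c\geq 5\pi/2$, contributing concentrated curvature $(2\pi-\theta_c)\leq-\pi/2$. For any singular cone point lying on the interior of a side, the CAT$(0)$ condition forces both lateral angles at the cone point of the passing geodesic to be at least $\pi$; hence the interior angle $\beta_c\geq\pi$ and $(\pi-\beta_c)\leq 0$. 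Applying singular Gauss--Bonnet to $D$ gives
\[
\alpha_1+\alpha_2+\alpha_3 \;=\; \pi + \sum_{c\in\Int D}(2\pi-\theta_c) + \sum_{c\in\partial D\setminus\{v_i\}}(\pi-\beta_c),
\]
and positivity of the left side forces $N := \#\{\text{singular cone points in }\Int D\}\leq 1$.

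Next I convert this into an inradius bound. Let $D_{2R}=\{x\in D:d(x,\partial D)\geq 2R\}$. Each point of $D_{2R}$ lies within $R$ of a singular cone point that is strictly interior to $D$, so $N\leq 1$ forces $D_{2R}\subseteq B(c_0,R)$ for a single interior cone point $c_0$, giving $\mathrm{diam}(D_{2R})\leq 2R$. If the inradius of $T$ equals $r$ and is realized at $p\in D$, then $B(p,r-2R)\subseteq D_{2R}$; this ball has diameter at least $r-2R$, forcing $r\leq 4R$. Finally, given any $q$ on a side of a geodesic triangle, the perpendicular geodesic from $q$ into $D$ together with convexity of the distance-to-side function produces an inscribed ball of radius $\tfrac{1}{2}d(q,\text{opposite sides})$ in $T$; combined with $r\leq 4R$ this yields $d(q,\text{opposite sides})\leq 8R$, so every geodesic triangle is $8R$-slim and $X$ is Gromov hyperbolic.

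The main obstacle is the bookkeeping in singular Gauss--Bonnet when a geodesic side of $T$ passes through a singular cone point: each such boundary cone point must contribute non-positively, which requires showing that the interior angle there is at least $\pi$. This reduces to the CAT$(0)$ statement that a geodesic through a cone point of total angle $>2\pi$ makes angle $\geq\pi$ on each side. The remaining steps are routine diameter and triangle-inequality comparisons.
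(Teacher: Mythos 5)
Your proof is correct, and it follows the same overall strategy as the paper: if the singular points are not cobounded you produce arbitrarily large flat balls, hence fat triangles; if they are cobounded you convert that into a uniform slimness constant by controlling the singular points that a geodesic triangle can enclose. The difference lies in the key step of the converse, and here your version is actually the more robust one. The paper asserts outright that a geodesic triangle contains no cone points in its interior and concludes that triangles are flat with inradius at most $D$; your Gauss--Bonnet computation only yields $N\le 1$, and that weaker bound is in fact the correct one: in $\Cone(\R^2,5)$, three points at distance $1$ from the apex with consecutive angular gaps $5\pi/6<\pi$ span a genuine geodesic triangle (each side is a developed chord missing the apex) whose interior contains the singular vertex, with angle sum $\pi/2$ exactly as your formula predicts. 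So the paper's observation fails for order-$5$ cone points, whereas your count $N\le 1$, combined with the $D_{2R}$ covering trick and the perpendicular-geodesic inradius estimate, still gives a uniform slimness constant ($8R$ in place of the paper's $2D$). The price is the extra bookkeeping you flag---the interior angle at a boundary cone point is at least $\pi$ (the local geodesic condition at a cone point, and local geodesics are global ones in CAT(0)), and the inward perpendicular from $q$ realizes the distance to the side (the angle criterion for nearest-point projection together with convexity of the distance to a convex set, which also shows the perpendicular cannot recross the side)---but these are routine in a CAT(0) cone surface, so your argument is complete and, on the delicate point, more careful than the one in the paper.
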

\begin{proof} If the cone points are not cobounded in $X$, then for every integer $N>0$, there exists a point $x_N$ such that the ball of radius $N$ centered at $x_N$ is isometric to a ball of radius $N$ in $\E^2$.  It follows that $X$ is not $\delta$-hyperbolic for any $\delta$.  Conversely, observe that if $T\subset X$ is a geodesic triangle, then $X$ cannot contain any cone points in its interior.  Let $T$ be any geodesic triangle, and let $D$ be the constant such that every point of $X$ is within distance $D$ of a cone point. It follows from the observation that the circumcenter of $T$ is at most distance $D$ from each of the vertices.  In particular, $X$ satisfies the Rips condition with constant $2D$. 
\end{proof}

We will also need a lemma relating the order of cone points to a property called bounded growth at scale $R$, which we now define.
\begin{definition}[Bonk--Schramm \cite{BS00}]A metric space $X$ is said to have \textit{bounded growth} at scale $R$ if there exists $r<R$ and $N>0$ such that every ball of radius $R$ in $X$ can be covered by $N$ balls of radius $r$.  
\end{definition}

The next lemma states that for a singular Euclidean metric, bounded growth at some scale is equivalent to having bounded cone angles at all singular vertices. 
\begin{lemma} \label{AngleBound} Let $X$ be a singular cone metric on $\R^2$.  $X$ has bounded growth at some scale if and only if the cone angles of singular points are bounded.  
\end{lemma}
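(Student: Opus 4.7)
The plan is to establish both directions using the local structure of $X$ near its vertices. A small neighborhood of any vertex $v$ of cone angle $\theta_v$ is isometric to a Euclidean cone of angle $\theta_v$, and since all squares of $X$ are unit, distinct vertices lie at CAT(0) distance at least $1$; in particular the ball $B(v,1/2)$ is always isometric to such a Euclidean cone of angle $\theta_v$ and radius $1/2$.

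For the forward direction I argue by contrapositive. Suppose cone angles are unbounded, and fix any candidate triple $R$, $r<R$, $N$ for bounded growth. Choose a singular vertex $v$ with cone angle $\theta$ very large, and set $\rho = \min(R,1/2)$. The ball $B(v,\rho)\subseteq B(v,R)$ is then a Euclidean cone of angle $\theta$ and radius $\rho$, whose metric circle $\{x : d(v,x) = \rho\}$ has length $\theta\rho$. A standard chord-to-arc estimate shows that any $r$-ball in $X$ intersects this circle in an arc of length at most $\pi r$, so covering $B(v,R)$ by $r$-balls requires at least $\theta\rho/(\pi r)$ of them. Taking $\theta$ large enough that this exceeds $N$ contradicts bounded growth at scale $R$.

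For the converse direction, let $\Theta$ be a uniform upper bound on cone angles, and take $R = 1/4$, $r = 1/8$. Any ball $B(x, R)$ contains at most one vertex, since distinct vertices are at distance at least $1$. If $B(x, R)$ contains no vertex, it is isometric to a Euclidean disk of radius $1/4$ and can be covered by a universal number $N_0$ of balls of radius $1/8$. Otherwise $B(x, R) \subseteq B(v, 1/2)$ for the unique enclosed vertex $v$, and $B(v, 1/2)$ is a Euclidean cone of angle $\theta_v \leq \Theta$ and radius $1/2$; such a cone is covered by at most $N_1(\Theta)$ balls of radius $1/8$, where $N_1$ depends only on $\Theta$. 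Setting $N = \max(N_0, N_1(\Theta))$ yields bounded growth at scale $R$.

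The main technical point is verifying the two local claims: that distinct vertices of $X$ are at CAT(0) distance at least $1$, and that $B(v, 1/2)$ is isometric to a Euclidean cone of radius $1/2$. Both rest on the observation that a CAT(0) geodesic leaving a vertex $v$ either enters the interior of some adjacent unit square $C$, exiting through a face at Euclidean distance at least $1$ from $v$, or travels along an edge at $v$, reaching the next vertex at distance exactly $1$. Once these are in hand, the covering estimates above are routine.
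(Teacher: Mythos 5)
Your converse direction (bounded angles $\Rightarrow$ bounded growth) is fine and is in the same spirit as the paper's one-line argument. The problem is in the forward direction, and it is a genuine gap. Bounded growth at scale $R$ only asks for the existence of \emph{some} $r<R$ and $N$, so to negate it you must defeat every pair $(r,N)$ with $r<R$, including $r$ arbitrarily close to $R$. Your obstruction, however, lives entirely inside $B(v,\rho)$ with $\rho=\min(R,1/2)$: you are forced to take $\rho\le 1/2$ because beyond that radius the ball around $v$ need not be a metric cone (other vertices, possibly singular, occur at distance $1$). Consequently, when $R>1/2$ and $r\in[1/2,R)$, the ``chord-to-arc'' estimate fails outright: a single ball of radius $r\ge\rho$ centered at $v$ contains the entire circle $\{x: d(v,x)=\rho\}$ (indeed all of $B(v,\rho)$), since any two points of that circle are at distance at most $2\rho\le 2r$. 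So for these admissible choices of $r$ your count $\theta\rho/(\pi r)$ gives no contradiction, and the contrapositive ``unbounded angles $\Rightarrow$ no bounded growth at any scale'' is not established. (For $r<\rho$ your estimate is correct: pairwise distances below $2\rho$ force angular separations below $\pi$, and $2\rho\arcsin(r/\rho)\le\pi r$.)

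The paper avoids this by producing uncovered points at distance nearly $R$ from the cone point rather than at distance at most $1/2$: if $x_1,\dots,x_M$ are the centers of the covering balls, only finitely many directions at $v$ (at most a bounded angular measure in the link circle, which has length $n\pi/2$) are within angle $\pi$ of the initial directions of the geodesics $[v,x_i]$; once the cone order $n$ is large compared to $M$, a pigeonhole argument yields a direction at angle at least $\pi$ from all of them, and for a point $z$ in that direction with $r<d(z,v)<R$ the concatenation of $[x_i,v]$ and $[v,z]$ is a geodesic, so $d(z,x_i)=d(z,v)+d(v,x_i)>r$ for every $i$. This only uses the geometry of $X$ at $v$ itself (the link of $v$), not a cone structure out to radius $R$, and so it works for every $r<R$. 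To repair your proof you would need to replace the sphere-of-radius-$\rho$ counting by an argument of this type.
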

\begin{proof} First suppose the cone angles are bounded and $\leq N$.  Then the ball of radius 1 about any point can be covered by at most $N$ balls of radius $3/4$. Now assume that the cone angles are not bounded.  We claim that $X$ does not have bounded growth at any scale.  Let $p$ be a singular vertex of order $N$ and let $B_R(p)$ be the ball of radius $R$ center at $p$. Near $p$, $X$ is divided up into $N$ quarter planes, and for any point $x\in B_R(p)$, the geodesic between $x$ and $p$ lies in one of these quarter planes.  

Suppose for contradiction that $X$ has bounded growth at scale $R$, so that there exists $r<R$ and $M>0$ so that any ball of radius $R$ can be covered by at most $M$ balls of radius $r$.  Note that $M$ is at least 2.  Since the cone angles are unbounded, there exists some point $p$ with cone angle $N>4M$. Let $x_1,\cdots, x_m$ be the centers of the balls of radius $r$ which cover $B_R(p)$. Each of the $x_i$ lies in at most 2 quarter planes, so by the pigeonhole principle, there are at least $N-2M>2M\geq 4$ quarter planes which do not contain one of the $x_i$. For any point $z$ in one of these quarter planes, the geodesic between $z$ and $x_i$ passes through $p$.  It follows that since $r<R$, there are points not covered by the balls centered at the $x_i$, a contradiction.      
\end{proof}

The visual boundary at any point $x_0\in X$ is the space of equivalence classes of geodesic rays based at $x_0$, where two rays are equivalent if they stay at bounded distance from one another.  If $X$ is $\delta$-hyperbolic, the visual boundaries at any two points of $X$ are canonically homeomorphic.  For a singular cone metric on $\R^2$ the visual boundary is homeomorphic to $S^1$.

%\begin{definition} A metric space $(S,d)$ is called a \textit{quasi-circle} if it is homeomorphic to $S^1=\{x\in \C:|x|=1\}$ and if there exists $\lambda\geq1$ such that for any two points $x,y\in S$, at least one of the components of $S\setminus\{x,y\}$ has length at most $\lambda |x-y|$.
%\end{definition}
%
%\begin{lemma}\label{Quasicircle}Let $X$ be a singular cone metric on $\R^2$ for which the cone points are cobounded and have bounded angles. Then $\partial X$ is a quasi-circle.
%\end{lemma}

\begin{proposition}\label{ClassifySingularCone} Let $X$ be a singular cone metric on $\R^2$.  The following are equivalent:
\begin{enumerate}
\item $X$ is quasi-isometric to $\Hy^2$.
%\item $X$ is $\delta$-hyperbolic.
\item The singular cone points have bounded angles and are cobounded in $X$.  
%\item The set of cone points have bounded angles are dense in the visual sphere at some point of $X$. 
\end{enumerate}
\end{proposition}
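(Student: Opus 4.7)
The plan is to derive each direction from the two preceding lemmas together with quasi-isometry invariance of Gromov hyperbolicity and of bounded growth at some scale.

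For $(1)\Rightarrow(2)$, note that $\Hy^2$ is $\delta$-hyperbolic and, being a Riemannian manifold of bounded local geometry, has bounded growth at every scale. Both properties are preserved under quasi-isometries of proper geodesic metric spaces (for bounded growth this is essentially Proposition 9.2 of \cite{BS00}), so $X$ inherits them. Lemma \ref{cobounded} then forces the set of cone points to be cobounded, and Lemma \ref{AngleBound} forces the cone angles to be bounded.

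For $(2)\Rightarrow(1)$, the two lemmas together show that $X$ is a proper geodesic $\delta$-hyperbolic metric space with bounded growth at some scale. The Bonk--Schramm embedding theorem \cite{BS00} then produces a quasi-isometric embedding $X\hookrightarrow \Hy^m$ for some $m$. To upgrade this embedding to a quasi-isometry onto $\Hy^2$, I would pass to boundaries. Since $X$ is a CAT(0) space homeomorphic to $\R^2$, its visual boundary is a topological circle, which under hyperbolicity of $X$ agrees with the Gromov boundary. The induced quasisymmetric embedding $S^1=\partial_\infty X\hookrightarrow S^{m-1}=\partial_\infty\Hy^m$ lands as a topological, hence quasisymmetric, circle in $S^{m-1}$. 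Quasisymmetric uniformization of topological circles (a doubling linearly locally connected metric circle is quasisymmetric to the round $S^1$) identifies $\partial_\infty X$ with $\partial_\infty \Hy^2$ up to quasisymmetry, and one recovers a quasi-isometry $X\to\Hy^2$ via the standard correspondence between quasisymmetric self-maps of the Gromov boundary of a visual proper hyperbolic space and quasi-isometries of the space.

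The main obstacle is this last boundary-to-interior step: promoting a Bonk--Schramm embedding into some $\Hy^m$ down to an honest quasi-isometry with $\Hy^2$ is really the assertion that, among proper Gromov-hyperbolic geodesic spaces with bounded growth and topologically circular boundary, $\Hy^2$ is the only quasi-isometry type. A more concrete alternative that sidesteps this uniformization step would be to build a bounded-valence graph on the cone points of $X$, using cobounded spacing to make it quasi-isometric to $X$ and bounded cone angles together with the CAT(0) hypothesis to verify a combinatorial negative curvature (link) condition at every vertex; comparison with a uniform polygonal tiling of $\Hy^2$ then yields the quasi-isometry directly. Either approach carries the burden of controlling the constants so that the $\delta$-hyperbolic geometry produced by cobounded cone points and the bounded-growth geometry produced by bounded cone angles combine compatibly.
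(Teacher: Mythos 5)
Your direction $(1)\Rightarrow(2)$ is fine and is essentially the paper's argument: quasi-isometry invariance of hyperbolicity and of bounded growth at some scale, followed by Lemma \ref{cobounded} and Lemma \ref{AngleBound}. The problem is the direction $(2)\Rightarrow(1)$. The step ``lands as a topological, hence quasisymmetric, circle in $S^{m-1}$'' is false as stated: a topological circle in a sphere, equipped with the restricted visual metric, need not be quasisymmetrically equivalent to the round circle. The Tukia--V\"ais\"al\"a uniformization you invoke requires the boundary circle to be doubling \emph{and} of bounded turning (linearly locally connected) in the visual metric, and you verify neither; bounded turning of $\partial_\infty X$ is a genuine geometric condition on $X$, not a consequence of the boundary being homeomorphic to $S^1$. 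Moreover, even granting a quasisymmetric identification $\partial_\infty X\cong \partial_\infty\Hy^2$, the ``standard correspondence'' producing a quasi-isometry $X\to\Hy^2$ from a boundary quasisymmetry only holds when the hyperbolic space is \emph{visual}, i.e.\ every point lies within uniformly bounded distance of a geodesic (ray) between boundary points. That visuality hypothesis is exactly the condition the paper checks before applying the Bonk--Schramm machinery: it passes to the bounded-valence $1$-skeleton $X^{(1)}$ (bounded valence coming from bounded cone angles), notes that every edge lies on a bi-infinite geodesic so the union of geodesics is cobounded, and then cites Theorem 11.3 and Corollary 11.6 of \cite{BS00} to conclude directly that $X^{(1)}$, hence $X$, is quasi-isometric to $\Hy^2$. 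Your route through the embedding theorem into some $\Hy^m$ plus boundary uniformization could in principle be made to work, but the two missing verifications (bounded turning of the boundary circle and visuality of $X$) are the substance of the statement, and you flag rather than close this gap.

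Your sketched alternative (a bounded-valence graph on the cone points with a combinatorial negative-curvature condition) also does not go through as described: coboundedness and bounded angles alone do not give a uniform combinatorial curvature bound at every vertex of such a graph, since between cone points the space is flat and the comparison with a uniform hyperbolic tiling is not controlled without the global input (hyperbolicity plus visuality) that the Bonk--Schramm results are designed to exploit. The cleanest repair is the paper's: work with $X^{(1)}$, record that it is hyperbolic with circle boundary, has bounded valence, and has cobounded geodesics, and quote the relevant results of \cite{BS00} which characterize spaces quasi-isometric to $\Hy^2$ under exactly these hypotheses.
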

\begin{proof}
The 1-skeleton $X^{(1)}$ is quasi-isometric $X$; hence is hyperbolic by Lemma \ref{cobounded} and has boundary a topological circle by the remarks preceding the proposition. If $X$ has bounded angles, then $X^{(1)}$ has bounded vertex degree. By Theorem 11.3 and Corollary 11.6 of \cite{BS00}, the 1-skeleton $X^{(1)}$ will therefore be quasi-isometric to $\Hy^2$ if the union of all geodesics is cobounded in $X^{(1)}$.  It is not hard to see that every edge of $X^{(1)}$ lies on at least one bi-infinite geodesic. For the other direction, if $X$ is quasi-isometric to $\Hy^2$, then $X$ is hyperbolic, hence the cone points of $X$ are cobounded.  Moreover, by Theorem 11.2 of \cite{BS00}, $X$ has bounded growth at some scale, which is equivalent to having bounded angles by Lemma \ref{AngleBound}.
\end{proof}

\section{The local structure of an isometry}\label{sec:trace}
\noindent
In this section we study isometries of CAT(0) cube complexes locally, and see how the local picture of an isometry which is not cubical determines the global geometry. The main idea is that if an isometry is not cubical, then some points in lower-dimensional cubes will map into the interiors of cubes of higher dimensions. This forces the metric to be locally Euclidean at these points. From this observation, we are able to develop an isometry locally to find large convex subcomplexes which decompose as products with $\R^n$.
\subsection{The trace of an isometry.}
Let $X$ be a CAT(0) cube complex.  If $C\subset X$ is a closed $n$-cube, then $C$ is isometrically embedded in $X$.  By a \textit{chart} we mean an isometry $\phi: C\rightarrow [0,1]^n\subset\R^n$.  Any two charts differ by an isometry of $[0,1]^n$, which we identify with $O(n,\Z)$ as in the introduction. 

Now let $f:X\rightarrow X$ be an isometry. Suppose there exist two cubes $C, D\subset X$ with charts $\phi:C\rightarrow \R^n$ and $\psi:D\rightarrow \R^m$ such that $f(\text{int}(C))\cap\text{int}(D)\neq \emptyset$. Since $C,D$ are convex subsets of $X$, $f(\text{int}(C))\cap\text{int}(D)$ is convex if it is non-empty, hence it is connected.  

\begin{definition} Set $V=\text{int}(C)\cap f^{-1}(\text{int}(D))$. The \textit{trace} of $f$ at $C,D$ is defined to be the orthogonal affine map $\tau_{C,D}(f)=\psi\circ f \circ \phi^{-1}:\phi(V)\rightarrow\R^m.$ If $V$ is an open subset of $C$, $\tau_{C,D}(f)$ naturally extends to an orthogonal affine map $\R^n\rightarrow \R^m$, which we also denote by $\tau_{C,D}(f)$.
\end{definition}

Note that the trace is only well-defined up to a choice of chart, \emph{i.e.} up to pre- and post-composition with elements of the signed permutation group.  In what follows, we suppose we have $C,D$ as above with $n=m$, and that $V$ is an open subset of $C$.  Consider the standard cubulation of $\R^n$. We may write $\tau_{C,D}(f):\R^n_C\rightarrow \R^n_D$ as a map $x\mapsto Ax+b$, where $A\in O(n)$ and $b=(b_i)\in \R^n$ satisfies $\|b_i\|<\sqrt{2}/2$. In particular, if $b_i$ is an integer, then $b_i=0$. We call $A$ the \textit{orthogonal part} of $\tau_{C,D}$ and $b$ the \textit{translation part}. 

The standard cubulation of $\R^n$ decomposes into affine integer translates of lower dimensional cubulated Euclidean spaces, which we will call \textit{hypersurfaces}. 
\begin{lemma} \label{sec:Trace:lem:Orthodecomp} An isometry $T:x\mapsto Ax+b$ of $\R^n$ takes a proper hypersurface to a proper hypersurface if and only if up to conjugation by a permutation matrix, $A$ decomposes an orthogonal direct sum $A=A_1\bigoplus A_2$, where neither $A_i$ are empty, $b$ decomposes as an orthogonal sum $b=b_1\oplus b_2$ and the components of $b_2$ are integral.
\end{lemma}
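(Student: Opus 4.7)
The plan is to analyze directly what it means for the affine isometry $T(x)=Ax+b$ to send an integer translate of a coordinate subspace onto another, separating the conditions imposed on the orthogonal part $A$ from those on the translation $b$.

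The $(\Leftarrow)$ direction is a routine verification. Suppose, after conjugating by a suitable permutation, $A=A_1\oplus A_2$ with $A_i$ orthogonal acting on coordinate subspaces $V_1,V_2$ of positive dimension, and $b=b_1\oplus b_2$ with $b_2$ integral; the constraint $|b_i|<\sqrt{2}/2$ forces $b_2=0$. Then $V_1$ is $A$-invariant, $A|_{V_1}=A_1$ is an orthogonal self-map of $V_1$, and $b\in V_1$, so $T(V_1)=V_1$ is a proper hypersurface.

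For $(\Rightarrow)$, suppose $T$ sends the proper hypersurface $H_I=\{x:x_i\in\Z \text{ for all } i\in I\}$ (for some nonempty proper $I\subset\{1,\dots,n\}$) onto a proper hypersurface $H_J$. Each integer-indexed slice $v+W_I$ of $H_I$, where $W_I=\text{span}(e_j:j\notin I)$ and $v\in\text{span}(e_i:i\in I)\cap\Z^n$, must map onto a slice of $H_J$. Writing $AW_I+Av+b=v'+W_J$ yields first $AW_I=W_J$, and by orthogonality of $A$ also $A(W_I^\perp)=W_J^\perp$. Thus $A$ respects the splittings $\R^n=W_I\oplus W_I^\perp=W_J\oplus W_J^\perp$, and after permuting source and target coordinates to move $I$ and $J$ respectively into matching positions, $A$ becomes block diagonal $A_1\oplus A_2$ with $A_2$ an orthogonal map of size $|I|=|J|$ between $W_I^\perp$ and $W_J^\perp$.

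To upgrade $A_2$ to a signed permutation matrix and to force the corresponding block $b_2$ of $b$ to be integral, I would exploit the full condition $T(H_I)=H_J$: the map $T$ carries the integer lattice $\Z^I\subset W_I^\perp$ onto an integer translate of $\Z^J\subset W_J^\perp$. Since $A_2$ is orthogonal and sends the orthonormal basis $\{e_i:i\in I\}$ to an orthonormal basis of $W_J^\perp$, and such a basis generates a translate of $\Z^J$ only when it agrees with $\{\pm e_j:j\in J\}$ up to permutation, $A_2$ must itself be a signed permutation. Setting $v=0$ in the slice equation then shows $b_2\in\Z^{|J|}$, which together with $|b_i|<\sqrt{2}/2$ gives $b_2=0$. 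The main technical obstacle I anticipate is reconciling the "up to conjugation by a permutation matrix" clause with the case $I\ne J$, where one genuinely needs independent permutations on source and target coordinates; this is natural in the trace setting, since the charts $\phi$ on $C$ and $\psi$ on $D$ may each be adjusted independently by an element of $O(n,\Z)$, affecting $\tau_{C,D}$ on one side only.
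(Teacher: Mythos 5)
Your backward direction and the first half of your forward direction (extracting $AW_I=W_J$ and hence a block form for $A$ after adjusting coordinates) are fine, but the way you obtain integrality of $b_2$ rests on a misreading of the hypothesis, and that is a genuine gap. In the paper a proper hypersurface is a \emph{single} integral translate $v+W_I$ of a proper coordinate subspace, not the full grid $H_I=\{x: x_i\in\Z \text{ for all } i\in I\}$. Your forward argument assumes $T(H_I)=H_J$, i.e.\ that \emph{every} slice maps onto a slice; that stronger hypothesis is exactly what lets you say $T$ carries $\Z^I$ onto a translate of $\Z^J$, conclude that $A_2$ is a signed permutation (a claim the lemma does not make, and which is false under the correct hypothesis), and ``set $v=0$'' to get $b_2$ integral. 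Under the actual hypothesis you only know that one particular slice $v+W_I$ maps onto one slice $v'+W_J$, which gives $b_2=v'-A_2v$; since $A_2v$ need not be integral, integrality of $b_2$ for your slice-aligned splitting simply fails. Concretely, in $\R^3$ take $A=R_\theta\oplus 1$ with $\cos\theta=3/5$, $\sin\theta=4/5$, and $b=(-3/5,-4/5,0)$: this maps the line $(1,0,0)+\langle e_3\rangle$ onto the $z$-axis, yet the block of $A$ on the constrained directions is $R_\theta$ (not a signed permutation) and the corresponding part of $b$ is not integral. The lemma's conclusion holds for this $T$ only via the \emph{other} invariant splitting ($xy$-plane against $z$-axis, where the relevant part of $b$ is $0$), which your argument never produces. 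This is where the paper's proof does its real work: it fixes the splitting by taking a maximal coordinate subspace complementary to the directions in which $b$ is integral, translates so that $T(V)=V$, and then uses orthogonality of $A$ to kill the off-diagonal block $B$. Your proof needs an analogous step that selects the right invariant splitting rather than the one aligned with the given hypersurface.

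Two smaller points. The bound $\|b_i\|<\sqrt{2}/2$ belongs to the trace normalization preceding the lemma, not to the lemma itself, so it should not be invoked to force $b_2=0$ (harmless in the backward direction, but it signals conflating the lemma with its application). Also, your two directions use two different readings of ``hypersurface'' (a single affine subspace going backward, the grid going forward), so even on its own terms the proposal does not establish a biconditional for one fixed notion; under the grid reading your backward direction would in fact need $A_2\in O(n_2,\Z)$, which is not among the stated conditions. Your closing observation that one genuinely needs independent permutations on source and target (e.g.\ the coordinate swap in $\R^2$ sends the $x$-axis to the $y$-axis but is not permutation-conjugate to a nontrivial block sum) is fair, matches the fact that the trace is only defined up to independent changes of chart, and is glossed over in the paper's own wording—but it does not repair the main gap above.
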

\begin{proof} The second half is clearly sufficient, since if $T$ can be conjugated into this form, the subspace corresponding to the first $\text{rk}(A_1)$ coordinate vectors gets mapped to an integer translate of itself.  For the forward direction, if $b$ does not have any integer components, then clearly $T$ does not take hypersurfaces to hypersurfaces.  Hence consider some maximal collection of basis vectors $\{e_1,\ldots, e_k\}$ which span a coordinate $k$-plane $V$, and such that $b$ has integral coordinates $b_{k+1},\ldots, b_{n}$.  Each hypersurface is an integral affine translate of a subspace of this form, after permuting basis vectors.  After composing $T$ with an integer translation, we have that $T(0)=(x_1,\ldots,x_k,0\ldots, 0)$. It follows that $T(V)=V$ if and only if $A$ has the block form \[\left(\begin{array}{c|c}
A_1 & B\\ \hline
0 & A_2
\end{array}\right).\]
Hence we must have that $A_1\in O(k)$. We claim that $B$ must be 0.  Indeed, if $T'=T-T(0)$, then $T'$ is orthogonal linear and $T'(V)=V$.  Now post-compose $T'$ with multiplication by the matrix $A_1^t\bigoplus I_{n-k}$, where $I_{r}$ is the $(r\times r)$-identity matrix.  The resulting linear map $T''$ is represented by the matrix in block form \[\left(\begin{array}{c|c}
I_k & A_1^t\cdot B\\ \hline
0 & A_2
\end{array}\right).\] Since $T''$ is an orthogonal transformation, we must have that $A_1^t\cdot B=0$.  Hence, $B=0$ and $A_2\in O(n-k)$, as required. 
\end{proof}

\begin{definition}\label{ProperDecomp}
A Euclidean isometry $T:x\mapsto Ax+b$ \textit{preserves a proper hypersurface} if up to conjugation by a permutation matrix we can write $\R^n=\R^{n_1}\oplus \R^{n_2}$, $A=A_1\oplus A_2$ and $b=b_1\oplus b_2$ such that for $i=1,2$
\begin{itemize}
\item $n_i=\text{rk}(A_i)$,
\item $b_i\in \R^{n_i}$,
\item $b_2$ is integral. 
 \end{itemize} In this case, $T$ preserves the hypersurface $\R^{n_1}$.
 
\end{definition}

As in the situation above, we consider the trace of the map $\tau=\tau_{C,D}(f)$ at the corner $v_0\in C$. By changing the chart we can assume $\phi$ maps $v_0$ to $0\in \R^n$ and that $\tau_{C,D}$ has the form $x\mapsto Ax+b$ with $\psi\circ f(v_0)=\tau(0)=b\in [0,1]^n$. In terms of traces of isometries, the previous lemma characterizes when an isometry preserves some cubical subcomplexes, at least locally.  In other words, if $\tau_{C,D}$ preserves some hypersurfaces, by changing chart we can assume $b_2$ is 0 and the components of $b_1$ lie in $[0,1].$ When an isometry does not preserve any proper hypersurfaces, it is transverse in the following sense
%For any trace of an isometry as above, we have the following.
%Now we make the following assumption.  Let $v_0$ be a vertex of $C$ such that $f(v_0)\in D$ and $f(\text{int}(C))\cap\text{int}(D)\neq \emptyset$. For any cube $C$ and any vertex $v_0\in C$, there exists such a $D$.  

\begin{lemma} \label{sec:Trace:lem:Transverse}Suppose an isometry $T:x\mapsto Ax+b$ does not preserve any proper hypersurfaces. Then each cube $K\subset \R^n$ of dimension $d$ with $1\leq d\leq n-1$ contains points which are mapped into the interior of higher-dimensional cubes.
\end{lemma}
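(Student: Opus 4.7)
The plan is to argue by contrapositive: assuming some $d$-cube $K$ (with $1\leq d\leq n-1$) has the property that every point of $K$ is mapped by $T$ into a cube of dimension at most $d$, I will show that $T$ takes some proper hypersurface to a proper hypersurface. By Lemma \ref{sec:Trace:lem:Orthodecomp}, this yields a decomposition of $T$ of the form required by Definition \ref{ProperDecomp}, contradicting the hypothesis that $T$ preserves no proper hypersurface.

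First I would translate the combinatorial hypothesis into a condition about integer coordinate hyperplanes. A point of $\R^n$ lies in the interior of a cube of dimension at most $d$ if and only if it lies on at least $n-d$ such hyperplanes (of the form $\{x_j=k\}$ for $j\in\{1,\ldots,n\}$, $k\in\Z$). Let $H_1,\ldots, H_m$ denote the finitely many integer coordinate hyperplanes meeting the compact set $T(K)$. Then our hypothesis becomes: for every $p\in K$, the image $T(p)$ lies on at least $n-d$ of the $H_i$.

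The key step is a dimension/pigeonhole argument. For each $S\subseteq\{1,\ldots,m\}$ of size $n-d$, set $E_S=K\cap T^{-1}\bigl(\bigcap_{i\in S}H_i\bigr)$, so that $K=\bigcup_{|S|=n-d}E_S$. Let $V$ denote the $d$-dimensional affine span of $K$; note $V$ is itself an integer translate of a coordinate $d$-subspace. If $V\not\subseteq T^{-1}\bigl(\bigcap_{i\in S}H_i\bigr)$, then $E_S$ lies in a proper affine subspace of $V$ and hence has $d$-dimensional measure zero inside $K$. Since the union is finite and $K$ has positive $d$-dimensional measure, some $S_0$ must satisfy $V\subseteq T^{-1}\bigl(\bigcap_{i\in S_0}H_i\bigr)$. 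Two distinct $H_i$ sharing a coordinate direction would be parallel and disjoint, forcing the intersection to be empty; hence the $n-d$ hyperplanes in $S_0$ involve $n-d$ distinct coordinate directions and $A_{S_0}:=\bigcap_{i\in S_0}H_i$ is an integer translate of a coordinate $d$-subspace. By dimension count, $T(V)=A_{S_0}$. Thus $T$ takes the proper hypersurface $V$ to the proper hypersurface $A_{S_0}$, and Lemma \ref{sec:Trace:lem:Orthodecomp} produces the contradiction.

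I expect the main obstacle to lie in justifying the measure/pigeonhole step carefully: one must rule out the possibility that $K$ is covered entirely by lower-dimensional $E_S$'s without any single $S$ giving containment $V\subseteq T^{-1}(A_S)$. The argument above resolves this by exploiting that $K$ has positive $d$-dimensional measure and that the union is finite, but one has to be attentive to degenerate cases where multiple hyperplanes in $S$ are parallel, so that $A_S$ is empty, or where the hyperplanes in $S$ do not span enough distinct directions to cut out a $d$-dimensional subspace.
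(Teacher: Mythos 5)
Your proof is correct, and it rests on the same key ingredient as the paper, namely Lemma \ref{sec:Trace:lem:Orthodecomp}, but it is organized along a genuinely different route. The paper argues directly with the entries of $A$: if the $j$-th coordinate of the image is integral on the whole cube, then the corresponding row of $A$ vanishes in the cube directions and the $j$-th component of the translation is integral, and having at least $n-d$ such coordinates produces exactly the block decomposition of Definition \ref{ProperDecomp}, contradicting the hypothesis (so only the easy direction of Lemma \ref{sec:Trace:lem:Orthodecomp} is needed). You instead take the contrapositive at the level of affine subspaces: the finite covering of $K$ by the sets $E_S$, together with the positive $d$-dimensional measure of $K$, forces the affine span $V$ to be carried into -- hence, by a dimension count, onto -- a single coordinate $d$-plane $A_{S_0}$, and then the forward (harder) direction of Lemma \ref{sec:Trace:lem:Orthodecomp} yields the forbidden decomposition. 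What your version buys: the pigeonhole step explicitly handles the possibility that the set of $n-d$ integer coordinates varies from point to point of $K$, a simultaneity issue the paper's proof passes over when it concludes that ``at least $r+1$ coordinates are non-integral at infinitely many points,'' and your argument applies verbatim to cubes not touching the origin; what the paper's version buys is a measure-free, purely coordinate computation that exhibits the decomposition directly. Your treatment of the degenerate cases is also sound: since $V\neq\emptyset$ and $V\subseteq T^{-1}\bigl(A_{S_0}\bigr)$, the intersection $A_{S_0}$ cannot be empty, so the hyperplanes in $S_0$ automatically involve $n-d$ distinct coordinate directions.
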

\begin{proof} 

The vector $b$ may have some integer coordinates and some non-integer coordinates.  After a permutation, assume that $b_1,\ldots, b_k$ are all integers and $b_{k+1},\ldots, b_{n}$ are not. For any subcollection $e_{i_1},\ldots, e_{i_r}$, consider the $r$-cube $t_1\cdot e_{i_1}+\cdots+ t_r\cdot e_{i_r}$, with the $t_j\in [0,1]$.  We will show that this contains infinitely many points which lie in the interior of an $r+1$-cube.  Under $\tau$, the $j$th coordinate of a point $t_1\cdot e_{i_1}+\cdots+ t_r\cdot e_{i_r}$ is \[c_j=b_j+\sum_{k=1}^r t_k\cdot a_{ji_k}.\]
If $c_j$ is an integer on the entire $r$-cube then we must have $b_j$ is an integer and each $a_{ji_k}=0$. But if this occurs for at least $n-r$ different $j$, then $n-r\leq k$ and $A$ decomposes into an orthogonal direct sum $A_1\bigoplus A_2$ where $A_2$ corresponds to $e_{i_1},\ldots, e_{i_r}$ and $A_1$ is corresponds to a subset of the directions $e_1,\ldots, e_k$.  This contradicts the assumption about proper hypersurfaces by Lemma \ref{sec:Trace:lem:Orthodecomp}. It follows that for $1\leq r\leq n-1$, the image of each $r$-cube must have at least $r+1$ coordinates which are non-integral at infinitely many points, as desired.
\end{proof}

%It follows from the lemma that in the latter case, points of $\R^n$ away from vertices have neighborhoods which are isometric to $n$-dimensional Euclidean space.

Consider now the general case of an isometry $T:x\mapsto Ax+b$ which preserves a proper hypersurface. After conjugation by a permutation matrix and an integer translation we may assume $T$ has the following form, $A=\Lambda\oplus B_0\oplus B_1  \cdots \oplus B_k$, and  $b=0\oplus b_0\oplus0\oplus \cdots \oplus 0$. Here, $\Lambda$ is a diagonal matrix with $\pm1$'s on the diagonal. In terms of Definition \ref{ProperDecomp}, $B_0$ corresponds to the preserved proper hypersurface, but $A_2$ has been decomposed further into blocks $\Lambda\oplus B_1\oplus\cdots\oplus B_k$. $\Lambda$ corresponds to the directions in which the derivative of $T$ is cubical.  For convenience, the first $l=\text{rk}(\Lambda)$ directions correspond to $\Lambda$. 

Let $L$ be the cube spanned by the first $l$ coordinate vectors.  We can post compose $T$ with $\Lambda\oplus I_{n-l}$ to make the derivative of $T$ the identity on $L$.  Note that $T(L)$ is parallel to $L$, and if $b=b_0=0$, then $T(L)= L$. Applying the previous lemma repeatedly we obtain
\begin{lemma}\label{GeneralTransverse}Suppose an isometry $T:x\mapsto Ax+b$ preserves a proper hypersurface.  Then 
%\begin{enumerate}
%\item If $b=b_1\neq 0$, then each cube $K\subset \R^n$ of dimension $d$ with $1\leq d\leq n-1$ contains points which are mapped into the interior of higher-dimensional cubes.
each cube $K\subset \R^n$ of dimension $d$ with $1\leq d\leq n-1$ contains points which are mapped into the interior of higher-dimensional cubes, unless $K\subset L\times p$ where $p$ is an integer vector and $T(0\times p)$ is also an integer vector. 
%\end{enumerate}
\end{lemma}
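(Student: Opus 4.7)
The plan is to reduce to Lemma \ref{sec:Trace:lem:Transverse} via the block decomposition, applied iteratively if needed. Write $T(x_L, x_{A'}) = (\Lambda x_L, T'(x_{A'}))$ where $T'(y) = A'y + b'$ acts on $\R^{n-l}$, and decompose the given cube as $K = K_L \times K_{A'}$ accordingly.

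The first observation I would make is that for a point $x = (x_L, x_{A'})$ in the interior of $K$, the number of non-integer coordinates of $T(x)$ equals $\dim K_L + N(T'(x_{A'}))$, where $N$ counts non-integer coordinates in $\R^{n-l}$. This is because $\Lambda$ is diagonal with entries $\pm 1$, so $\Lambda x_L$ has non-integer coordinates precisely in the positions where $x_L$ does, contributing exactly $\dim K_L$ of them. Consequently, producing an interior point of $K$ mapped into a strictly higher-dimensional cube is equivalent to producing $x_{A'} \in K_{A'}$ with $N(T'(x_{A'})) > \dim K_{A'}$.

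When $\dim K_{A'} = 0$, the cube $K_{A'}$ is a single integer vector $p$, and the desired condition becomes $T'(p) \notin \Z^{n-l}$; its negation---equivalently $T(0\times p) \in \Z^n$---is precisely the exceptional hypothesis of the statement (with $K$ lying in an integer translate of $L \times p$). When $\dim K_{A'} \geq 1$, I would apply Lemma \ref{sec:Trace:lem:Transverse} to the isometry $T'$ and the cube $K_{A'}$ in $\R^{n-l}$ directly if $T'$ does not preserve a proper hypersurface there, or invoke Lemma \ref{GeneralTransverse} inductively on the ambient dimension otherwise.

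The main technical obstacle is verifying that the exceptional case cannot arise in this inductive step. The key point is that $\Lambda$ was chosen maximally, absorbing every coordinate direction on which $T$ acts by $\pm 1$, so the analogous cubical block for $T'$ is zero-dimensional. Hence the exceptional subcomplex for $T'$ in $\R^{n-l}$ is forced to be a single point, contradicting $\dim K_{A'} \geq 1$. The induction therefore always yields $x_{A'} \in \mathrm{int}(K_{A'})$ with $T'(x_{A'})$ in the interior of a higher-dimensional cube of $\R^{n-l}$; combining this with the $\dim K_L$ non-integer coordinates contributed by $\Lambda x_L$ produces the required point in $K$ mapped into the interior of a cube of dimension strictly greater than $d$.
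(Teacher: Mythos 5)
Your reduction via the coordinate count (image dimension $=\dim K_L + N(T'(x_{A'}))$ for interior points, since $\Lambda$ is $\pm1$-diagonal with zero translation) is sound, and your treatment of the case $\dim K_{A'}=0$ matches the paper's "remaining case" exactly. The problem is the inductive step. You invoke the lemma "inductively on the ambient dimension" for $T'$ on $\R^{n-l}$, but induction on dimension only makes progress if a nontrivial cubical block gets split off at each stage, i.e.\ if $l\geq 1$. By your own maximality observation, $T'$ has \emph{no} cubical directions; so if $T'$ still preserves a proper hypersurface of $\R^{n-l}$ -- which is the typical situation, e.g.\ $A=B_0\oplus B_1$ with two non-cubical blocks and integral translation on the $B_1$ part (here one can even have $l=0$, so $T'=T$) -- your scheme has no applicable branch: Lemma \ref{sec:Trace:lem:Transverse} is unavailable because a hypersurface is preserved, and the "inductive" call is to the same dimension, so the argument is circular. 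The maximality of $\Lambda$ does rule out the exceptional case in the inductive conclusion, as you say, but it simultaneously guarantees that the recursion cannot reduce the dimension; you have conflated these two issues.

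The paper avoids this by never lumping the non-cubical part into a single map: it splits $\R^n=\R^l\oplus\R^{n_0}\oplus\R^{n_2}$ according to the blocks ($B_0$ carrying the translation $b_0$, and $B_1\oplus\cdots\oplus B_k$ with integral translation), decomposes $K=L'\times K'\times M'$, and applies Lemma \ref{sec:Trace:lem:Transverse} \emph{within} whichever factor is not a vertex -- each factor map preserves no proper hypersurface of its own factor by maximality of the block decomposition -- and then adds up non-integral coordinates across the product, with the vertex-only case $K=L'\times p$ handled by hand. No induction on dimension is needed. To repair your write-up you would essentially have to do the same factorwise argument (or induct on the number of blocks rather than the dimension). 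A secondary point, which the paper's proof also leaves implicit, is that both Lemma \ref{sec:Trace:lem:Transverse} and any inductive call require $1\leq\dim K_{A'}\leq n-l-1$, so the case where $K_{A'}$ is top-dimensional in a non-cubical factor needs separate comment; but the dimension-reduction failure above is the gap that actually breaks your argument.
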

\begin{proof}
The cubical structure on $\R^n$ is just the product of the cubical structures on the factors. Write $\R^n$ as $\R^l\oplus \R^{n_0} \oplus \R^{n_2}$ where $l=\text{rk}(\Lambda)$, $n_0=\text{rk}(B_0)$ and $n_2=n-l-n_0$ is the complementary dimension. Accordingly we can decompose $K=L'\times K'\times M'$. If $K'$ or $M'$ is not a vertex, then the conclusion above follows from Lemma \ref{sec:Trace:lem:Transverse}. 

%Now assume $K'$ is a vertex, so that $K=L'\times p \times M'$ for some integral vector $p$. For part (1), if $b=b_1\neq 0$, then $T(K)= L'\times (p+b_1)\times A|_{\R^{n_2}}M'$. As $b_1$ is not integral, $p+b_1$ is not integral, so some points of $K$ must map to a higher dimensional cube, proving part (1).
%
%Finally, for part (2), observe that each of the non-trivial blocks $A_1,B_1,\cdots, B_k$ corresponds to a preserved hypersurface which cannot be decomposed further.  Thus if either $K'$ or $L'$ is non-trivial, we apply Lemma \ref{sec:Trace:lem:Transverse} to obtain the desired conclusion. 

 The remaining case is that $K=L'\times p$, where $p$ is an integral vector and $1\leq \dim L'\leq n-1$. $T$ restricts to the identity on $L'$ but may not be the identity on $p$. If $T(0\times p)$ is not an integral vector, then $T(K)$ meets some higher dimensional cubes. If $T(0\times p)=q$ is integral, then $T(K)=L'\times q$ is an integer translate of $K$.
\end{proof}
\begin{rmk} A slightly stronger statement fell out of the proof. If $p$ is an integral vector, $K\subset L\times p$ and $T(0\times p)$ is integral, then $T(K)$ is actually an integral translate of $K$.
\end{rmk}

\subsection{Finding flats in CAT(0) cube complexes.}\label{Flats} The goal now is to apply the lemmas of the previous section to the trace of an isometry $f:X\rightarrow X$ in order to find cubical subspaces which are locally Euclidean, except possibly at vertices.  Let $C$ be a locally maximal cube so that $f(C)$ is not cubical, and let $D$ be a locally maximal cube whose interior meets the interior of $f(C)$.  We let $\tau=\tau_{C,D}$ be the trace of $f$ at $C,D$.  After changing the chart, we assume $\tau:x\mapsto Ax+b$ is in the same form as in Lemma \ref{GeneralTransverse}. 

 As in the previous section, it is easier first to start with the case where no proper hypersurfaces are preserved, and then extend it to the general case.

\begin{proposition} \label{FindingFlats} Given $f$, $C,$ and $D$ as above, suppose that $\tau_{C,D}$ does not preserve any proper hypersurface. Then there exist convex subcomplexes $K_C \supset C$ and $K_D\supset D$ of $X$ such that 
\begin{enumerate}
\item Except at a set of vertices $\Lambda_C\subset K_C$ and $\Lambda_D\subset K_D$, the complexes $K_C$ and $K_D$ are locally Euclidean of dimension $n$.
\item All points of $K_C\setminus \Lambda_C$ and $K_D\setminus \Lambda_D$ have isomorphic links.
\end{enumerate}
\end{proposition}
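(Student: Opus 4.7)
Strategy: I will use Lemma \ref{sec:Trace:lem:Transverse} to show that $X$ is locally Euclidean at every non-vertex point of $C$, then develop this flat neighborhood outward into convex subcomplexes $K_C, K_D$ whose only possible singularities lie at vertices.

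\emph{Step 1 (local Euclidean structure at $C$).} Let $F \subset C$ be a proper face of dimension $d$ with $1 \le d \le n-1$. Since $\tau_{C,D}$ preserves no proper hypersurface, Lemma \ref{sec:Trace:lem:Transverse} supplies a point $p \in \text{int}(F)$ whose image $\tau(\phi(p))$ lies in the interior of an $n$-cube of the standard cubulation of $\R^n$; equivalently $f(p)$ is interior to some $n$-cube $E \subset X$. Hence $\lk(f(p)) \cong \Sigma_{n-1}$, and since $f$ is an isometry, $\lk(p) \cong \Sigma_{n-1}$ as well. The join decomposition $\lk(p) \cong \Sigma_{d-1} * \lk(F)$ then forces $\lk(F) \cong \Sigma_{n-d-1}$, the standard sphere. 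Geometrically, $F$ is contained in exactly $2^{n-d}$ top-dimensional cubes of $X$, arranged as in the standard cubulation of $\R^n$. Applying the same argument with $f^{-1}$ in place of $f$ gives the corresponding statement at $D$.

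\emph{Step 2 (constructing $K_C, K_D$).} Build $K_C$ by starting with $\{C\}$ and iteratively adjoining all $n$-cubes of $X$ containing any positive-dimensional face $F'$ of the current collection for which $\lk(F')$ is a standard sphere. The resulting subcomplex $K_C \supset C$ has the property that every positive-dimensional face of each constituent $n$-cube with standard $X$-link is surrounded in $K_C$ by its full flat arrangement. The key claim, propagated by the isometry $f$, is that every such positive-dimensional face in fact has standard $X$-link: applying Step 1 to an arbitrary newly adjoined $n$-cube $C'$ (using $f$ and a target cube $D'$ meeting $f(C')$ in place of $C, D$), the transversality of $\tau_{C',D'}$ is inherited from that of $\tau_{C,D}$, and shows that all positive-dimensional faces of $C'$ have standard links in $X$. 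Setting $\Lambda_C$ to be the vertices of $K_C$ with non-spherical link (and $\Lambda_D$ symmetrically), $K_C \setminus \Lambda_C$ and $K_D \setminus \Lambda_D$ are locally Euclidean of dimension $n$ with all links isomorphic to $\Sigma_{n-1}$, verifying (1) and (2).

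\emph{Step 3 (convexity).} The main obstacle. A union of $n$-cubes is not a priori geodesically convex. To show $K_C$ is convex, take $x, y \in K_C$ and argue the CAT(0) geodesic $[x,y]$ stays in $K_C$: approximating by points in $K_C \setminus \Lambda_C$ where $K_C$ is a flat Euclidean manifold, the Euclidean straight line between them lies in a flat neighborhood chart-identified with an open set of $\R^n$ by Step 2. This straight line is a CAT(0) geodesic in $X$ by uniqueness, so it lies in $K_C$. The delicate point is ensuring the geodesic cannot escape through a singular vertex $v \in \Lambda_C$: although $\lk(v)$ may contain directions outside $K_C$, CAT(0) angle comparison prevents a geodesic through $v$ from bending into a non-$K_C$ direction unless forced to by its endpoints, which they are not because $x, y \in K_C$. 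Establishing this carefully is where the proof must do its real work.
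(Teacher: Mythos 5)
Your overall strategy (use Lemma \ref{sec:Trace:lem:Transverse} to force flatness away from vertices, develop outward, then prove convexity) is the same as the paper's, but two steps have genuine gaps. First, in Step 1 the deduction ``$f(p)$ is interior to some $n$-cube $E$, hence $\lk(f(p))\cong\Sigma_{n-1}$'' is unjustified: that identification requires $E$ to be locally maximal, which you have not shown for the cubes you encounter, and in the form the proposition is actually used later (see the remark following it and the proof of Theorem \ref{GeneralFindingFlats}) $C$ and $D$ are \emph{not} assumed locally maximal, so the links are joins $\Sigma_{n-1}*\lk(C)$ rather than standard spheres. The correct (and weaker) statement to propagate is only that all links of points of $K_C\setminus\Lambda_C$ are isomorphic to one another, and that $K_C$ is \emph{intrinsically} locally Euclidean. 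Relatedly, Lemma \ref{sec:Trace:lem:Transverse} is a statement about the standard cubulation of $\R^n$; to read it off in $X$ at a face $F$ you must already know that $X$ near $f(F)$ is tiled standardly, so $K_C$ and $K_D$ have to be built by a simultaneous induction in which each side bootstraps the other (the paper's $K_C^i,K_D^i$), and the claim in Step 2 that ``the transversality of $\tau_{C',D'}$ is inherited'' needs the explicit observation that developing changes the trace only by integer translations and signed permutations, so non-preservation of hypersurfaces persists; you assert this inheritance without proof.

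Second, and most seriously, Step 3 is not an argument: ``CAT(0) angle comparison prevents a geodesic through $v$ from bending into a non-$K_C$ direction'' is exactly the assertion of local convexity at the branch vertices, which is the whole content of the convexity claim, and you explicitly leave it as ``where the proof must do its real work.'' A geodesic between two points of a union of cubes can in general cut through a vertex and leave the union; what rules this out is a link condition, not an endpoint condition. The paper closes this by showing that for every vertex $v$ the subcomplex $\lk(v)\cap K_C$ is a full (flag) subcomplex of $\lk(v)$ --- using the join structure $\lk(p)\cong\Sigma_{n-1}*L$ to see that any edge of $\lk(v)$ joining two vertices of $\lk(v)\cap K_C$ already lies in $\lk(v)\cap K_C$ --- so the inclusion $K_C\hookrightarrow X$ is a local isometry and hence $K_C$ is convex; convexity of $K_D$ then follows since $f$ carries $K_C$ isometrically onto $K_D$. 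Without an argument of this kind your proof establishes (at best) the local statements (1) and (2) but not convexity, so the proposal as written is incomplete.
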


\begin{rmk}We do not assume $C,D$ are locally maximal in this lemma. Thus, part (2) says that the link of any point in $K_C$ or $K_D$ is isomorphic to the join of $\Sigma_{n-1}$ and $\lk(C)=\lk(D)$. 
\end{rmk}

\begin{proof} Let $C$ and $D$ be $n$-cubes such that $f(\text{int}( C))$ intersects $\text{int} (D)$ in an open set, and let $\tau=\tau_{C,D}:\R^n\rightarrow \R^n$ be the associated trace map. We will show that locally, the map $f$ is determined by $\tau$, and implies the existence of subcomplexes $K_C$ and $K_D$ as claimed.

We will build the complexes $K_C$ and $K_D$ as follows. First set $K_C^0=\Int(C)$ and $K_D^0=\Int(D)$. By assumption, an open subset $U\subset \Int(C)$ maps isometrically onto an open subset of $\Int(D)$. It follows that all points in $K_C^0$ and $K_D^0$ have isomorphic links.  If $F$ is a face of $C$ which maps under $f$ to an interior point of $D$, then all points in $\Int(F)$ and $\Int(D)$ also have isomorphic links.  In particular, there exist a collection of $n$-cubes $E_1,\ldots, E_r$ containing $F$ whose interiors map into the interior of $D$, and thus make up a Euclidean neighborhood of $F$. Moreover, the isometry $f$ restricted to these cubes is just $\tau$, restricted to the corresponding cubes in $\R^n$ by continuity.  

With this inductive argument in mind, we proceed as follows.  Suppose we have already determined $K_C^i$ and $K_D^i$.  Let $F$ be a face in the closure $\overl{K_C^i}$ of $K_C^i$, and suppose that $f(F)$ meets the interior of some $n$-cube $E'$ in $K_D^i$.  Then at all points in $F$, the link is the same as in $E'$ and we can find cubes $E_1,\ldots, E_r$ in $\lk(F)$ which map to a locally $n$-dimensional euclidean neighborhood of $f(F)\cap\Int(E')$.  In particular, for each $j$ we have $f(\Int(E_j))\cap\Int(E')\neq \emptyset$. Set $N(F)=\Int(F)\cup\Int(E_1)\cup\cdots\cup\Int(E_r)$.  If $F_1,\ldots F_k$ are all the cubes in $\overl{K_C^i}$ whose image meets the interior of an $n$-cube of $K_D^i$, define \[K_C^{i+1}=K_C^i\cup\bigcup_{i=1}^kN(F_i),\] where we identify points in $K_C^{i+1}$ if they map to the same point under $f$. We define $K_{D}^{i+1}$ similarly, using $f^{-1}$.  

Observe that if $p\in K_C^i$ or $K_D^i$ and $q\in \Int(C)$, then $\lk(p)\cong \lk(q)$. To see this, as noted above it holds for all points in $K_C^0$ and $K_D^0$. For the inductive step, assume it holds for $K_C^{i}$ and $K_D^i$. If $p\in K_C^{i+1}$ or $K_D^{i+1}$, then $p$ lies in the interior of some cube $E$ which meets the interior of an $n$-cube of $K_C^i$ or $K_D^i$.  All points in $\Int(E)$ have the same link, and by induction all points in $K_C^i$ and $K_D^i$ have the same link as points in $\Int(C)$, which proves the assertion. 

We further remark that $K_C^i$ and $K_D^i$ are both connected open subsets of $X$, and that if $E$ is an $n$-cube such that $\Int(E)\subset K_C^i$ then there exists a chart on $E$ so that the orthogonal part of the trace of $f$ at $E$ is just $\tau_{C,D}$. Set $K_C^\infty=\cup_{i=1}^\infty K_C^i$ and define and $K_C$ to be the closure $K_C=\overl{K_C^{\infty}}$ (we define $K_D^\infty$ and $K_D$ similarly).  In order to prove the lemma, it suffices to show that $K_C$, $K_D$ satisfy both parts of the proposition. By the previous paragraph, we know that it holds for every point in $K_C^\infty$ and $K_D^\infty$.

\begin{lemma} \label{FaceMeet}Let $E$ be $n$-cube such that $\Int(E)\subset K_C^{i}$, and $E'$ an $n$-cube such that $\Int(E')\subset K_D^{i}$. 
\begin{enumerate}
\item If $f(\Int(E))\cap \Int(E')\neq \emptyset$, then there exists a face of $E$ which meets $\Int(E')$ and vice versa.   
\item There exists an integer $R=R(n)>0$ such that if $F$ is a codimension one face of $E$, then $\Int(F)$ is contained in $K_C^{i+R(n)}$.  
\end{enumerate}
\end{lemma}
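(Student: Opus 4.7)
My plan handles the two parts of the lemma separately. The starting observation is the one already made in the discussion preceding the lemma: the orthogonal part of $\tau = \tau_{E,E'}$ agrees with that of $\tau_{C,D}$ up to chart, and since $f$ restricts to an affine map with constant derivative $A$ on the developed region, one can show that $\tau$ also preserves no proper hypersurface---either because $A$ is not a signed permutation (so $\tau$ cannot even be a cubical isomorphism), or because $A$ is a signed permutation, in which case $A$ carries $\Z^n$ to $\Z^n$ and the translation parts of $\tau_{E,E'}$ and $\tau_{C,D}$ agree modulo $\Z^n$, so both have non-integer components.

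For part (1), I first argue $\tau(E) \neq E'$: equality would force $\tau$ to be a cubical isomorphism, hence $A$ signed permutation with integer translation, contradicting the above. Now $U = \tau(E) \cap \Int(E')$ is a non-empty, proper open convex subset of $\Int(E')$, whose boundary in $\R^n$ lies in $\tau(\partial E) \cup \partial E'$. This boundary cannot lie entirely in $\partial E'$, else $U = \Int(E')$, forcing $\tau(E) \supseteq E'$ and thus equality by volume. So some $q \in \partial U$ lies in $\tau(\partial E) \setminus \partial E'$; since $q \in \overline{U} \subseteq \overline{E'}$, we have $q \in \Int(E')$. Picking a proper face $F$ of $E$ with $q \in \tau(F)$ yields the desired face, and the ``vice versa'' assertion follows by symmetry, applying the same argument to $f^{-1}$.

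For part (2), fix a codim-one face $F$ of $E$. By Lemma~\ref{sec:Trace:lem:Transverse} applied to $\tau$, the image $\tau(F)$ meets the interior of some $n$-cube $E''_F \subset \R^n$. Once $E''_F \in K_D^j$, processing $F$ at the next step places $\Int(F) \subset K_C^{j+1}$, so I need to bound how many development steps it takes $E''_F$ to enter $K_D$. Both $E'$ and $E''_F$ lie among the finitely many $n$-cubes of $\R^n$ whose interiors meet $\tau(E)$ (a bounded number depending only on $n$), and any two such cubes are joined by a chain of codim-one face-adjacencies whose length is bounded by a function $\ell(n)$ of $n$. I then show each link in this chain costs one step of $K_D$-development: if $E_* \in K_D^j$ shares a codim-one face $G$ with $E_{**}$, and both meet $\tau(E)$, then $G$ crosses $\Int(\tau(E))$ (the non-preservation of hypersurfaces rules out $G \subseteq \partial \tau(E)$); hence $f^{-1}(G) \cap \Int(E) \neq \emptyset$ with $G \subset \overline{K_D^j}$, so processing $G$ adds $\Int(E_{**})$ to $K_D^{j+1}$. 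Iterating along the chain gives $E''_F \in K_D^{i + \ell(n)}$, hence $\Int(F) \subset K_C^{i + \ell(n) + 1}$; set $R(n) = \ell(n) + 1$.

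The main obstacle is the propagation claim in part (2)---verifying that each codim-one adjacency between cubes both meeting $\tau(E)$ yields exactly one step of $K_D$-development. In particular, I must rule out the degenerate scenario where a shared face $G$ lies on $\partial \tau(E)$, which would fail to give $f^{-1}(G) \cap \Int(E) \neq \emptyset$. This is precisely where the translation-part control from the opening paragraph is essential: if a face of $\tau(E)$ coincided with a coordinate hyperplane, $\tau$ would preserve that hypersurface, contradicting the hypothesis of Proposition~\ref{FindingFlats}.
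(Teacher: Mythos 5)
Your architecture matches the paper's proof almost exactly. For (1) the paper makes the same move---if no face of $E$ meets $\Int(E')$ then $f(\Int(E))=\Int(E')$, forcing $f|_{\Int(E)}$ to be cubical---and your boundary/convexity argument merely fills in why that dichotomy holds. For (2) the paper likewise applies Lemma \ref{sec:Trace:lem:Transverse} to the trace at $E$, covers $f(E)$ by the boundedly many $n$-cubes whose interiors meet $f(\Int(E))$, and takes $R(n)$ to be (a bound on) their number; your chain of codimension-one adjacencies, each costing one development step, is precisely the propagation mechanism the paper compresses into ``it suffices to give a bound on $k$,'' so there you are more explicit than the source. Two small remarks: the segment joining a point of $\Int(E_*)\cap\tau(\Int(E))$ to a point of $\Int(E_{**})\cap\tau(\Int(E))$ crosses the separating coordinate hyperplane at a point lying in the relative interior of the shared face $G$ \emph{and} in the open convex set $\tau(\Int(E))$, so $G\cap\tau(\Int(E))\neq\emptyset$ follows from convexity alone and the worry about $G\subseteq\partial\tau(E)$ never arises; and in (2) you invoke an $E'$ whose interior meets $\tau(E)$, which part (2) does not hypothesize---you should note that every $n$-cube whose interior enters $K_C^i$ is added, by construction, together with such a partner in $K_D^{i-1}$ (with $E=C$, $E'=D$ at step $0$).

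The one step that does not hold up as written is the opening dichotomy, which is supposed to establish that the trace at $E$ preserves no proper hypersurface. The first branch (``$A$ is not a signed permutation, so $\tau$ is not a cubical isomorphism'') proves the wrong thing: failing to be cubical does not preclude preserving a proper hypersurface---for instance $A=R_\theta\oplus I_1$ on $\R^3$ with translation $(0.3,0.4,0)$ is not cubical but preserves the coordinate plane. The second branch is also insufficient, because ``$b$ has non-integer components'' is not the criterion of Lemma \ref{sec:Trace:lem:Orthodecomp}: a signed permutation swapping the first two coordinates with $b=(0,0,1/2)$ has non-integral $b$ yet preserves the third coordinate line. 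The fact you need is true, and the ingredients you list already give it, but by a different inference: on the developed region $f$ is obtained by developing the single affine map $\tau_{C,D}$, so the trace at $E$ equals $\tau_{C,D}$ up to pre- and post-composition with chart changes (signed permutations and integer translations); such compositions carry hypersurfaces to hypersurfaces, so the property ``preserves a proper hypersurface'' is invariant under them, and the trace at $E$ preserves none because $\tau_{C,D}$ does not by the hypothesis of Proposition \ref{FindingFlats}. With that substitution, both your part (1) and your applications of Lemma \ref{sec:Trace:lem:Transverse} in part (2) go through.
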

\begin{proof}For (1), since $E$ and $E'$ are both $n$-cubes and $f$ is an isometry, the only way $f(\Int(E))\cap \Int(E')\neq \emptyset$ but that no face of $E$ meets interior of $E'$ is if $f(\Int(E))=\Int(E')$. But in this case, by continuity, $f$ must be cubical, contradicting our assumption on $f|_{\Int(E)}$. 

For (2), if $\Int(E)\subset K_C^{i}$, then $X$ is locally Euclidean on the image of $E$, and at $E$, the isometry $f$ is given by a trace $\tau$ which does not take proper hypersurfaces to proper hypersurfaces. Since $X$ is locally Euclidean at the image of $\Int(E)$, there must be a cubical Euclidean neighborhood of $f(\Int(E))$, say $D_1,\ldots, D_k$, such that $f(E)\subset \cup_iD_i$, and $\Int(D_i)\cap f(\Int(E))\neq \emptyset$ for each $i$. If $F\subset E$ is a codimension 1 face of $E$, then if $F$ meets the interior of an $n$-cube $D$, so does $E$. By Lemma \ref{sec:Trace:lem:Transverse}, we conclude that points of $F$ meet the interior of at least one of the $D_i$.  To prove the claim, it suffices to give a bound on $k$. By considering the distance to the central point of $E$, we see that the interior of $E$ meets at most $2^n$ cubes. Take $R(n)=2^n$.
%
%by definition there exists a face $F_0\subset E$ such that $f(\Int(F_0))$ meets an $n$-cube $E'$ of $K_D^{i-1}$.  It follows that in fact $f(\Int(E))\cap\Int(E')\neq \emptyset$.  By part (1), there exists a face $F'\subset E'$ such that $f(\Int(E))\cap \Int(F')\neq \emptyset$, hence, $\Int(F')\subset K_D^{i+1}$.  
%Choose $F'$ so that $\dim(F')$ is minimal.  We claim that $N(F')$ defined as above, meets points in the interiors of all codimension one faces of $f(E)$. 
%
%Indeed, suppose $F'\cong [0,1]^k$ is a $k$-cube in $E'$.  By choosing a chart $\psi:E'\rightarrow \R^n$, we can identify $F'$ with $[0,1]^k\times\{0\}^{n-k}\subset[0,1]^n$. There exist $2^{n-k}$ $n$-cubes $E'=E_1',\ldots, E'_{2^{n-k}}$ forming an $n$-dimensional Euclidean neighborhood around $F'$, together with a larger chart $\Psi$ extending $\psi$, and identifying these cubes with $[0,1]^k\times[-1,1]^{n-k}\subset \R^n$. 
%
%Consider the trace map $\sigma:\R^n\rightarrow \R^n$ extending $\phi\circ f\circ \Psi$. The image of $\Psi(F')=[ 0,1]^k$ meets the interior of $[0,1]^n\subset \R^n$. First suppose $k=0$.  Then $N(F')$ contains the ball of radius 2 around $F'$, hence the image of $N(F)$ under $\sigma$ clearly contains $\phi(E)$, since $\sigma(F')\in \Int(E)$.  Now suppose $k\geq 1$.  Since $k$ was assumed to be minimal, we must have that $f(\partial F')$ does not meet $\Int(E)$.  Note that because opposite faces of $E$ are parallel and distance one apart, and since $\tau$ does not take hyperplanes to hyperplanes, opposite faces of $F'$ cannot map to opposite faces of $E$ under $f$.  
\end{proof}
\begin{rmk} The constant $R(n)$ here is probably far from optimal, but for the purposes of the proposition we just need a bound to get that every codimension one face appears in $K_C^{i}$ eventually.
\end{rmk}

Following Lemma \ref{FaceMeet}, if the interior of some $n$-cube $E$ is contained in $K_C^{\infty}$, then so is the interior of each codimension one face of $E$. Moreover, restricted to each cube of $K_C^\infty$, the isometry $f$ looks like $\tau_{C,D}$, for a suitable choice of chart. Now for any cube $F$ in the closure of $K_C^\infty$ with $1\leq \dim(F)\leq n-2$, we know that by Lemma \ref{sec:Trace:lem:Transverse}, there exist points in $\Int(F)$ which map to higher dimensional cubes.  It follows that if $1\leq\dim(F)\leq n-2$, the link of points in $\Int(F)$ is the same as at all other points in $K_C^\infty$. 

The remaining cubes in the closure of $K_C$ are $0$-cubes, whose links may or may not be isomorphic to the link of each point in $K_C^\infty$. Denote the set of vertices which do not have the same link by $\Lambda_C$.  Similarly, $K_D$ is locally Euclidean except perhaps at a collection of vertices $\Lambda_D$. This proves that $K_C$ and $K_D$ satisfy claims (1) and (2) of the proposition.

Observe that $f$ takes $K_C$ isometrically onto $K_D$, mapping $\Lambda_C$ onto $\Lambda_D$. Thus, to prove convexity, it suffices to show that $K_C$ is a convex subcomplex of $X$.  Consider the inclusion map $\iota:K_C\rightarrow X$; we claim that $\iota$ is a local isometry. It is clearly injective, so we just need to show links of vertices embed as full subcomplexes.  

We know that at any point $p \in K_C\setminus \Lambda_C$, the link decomposes as $\lk(p)\cong \Sigma_n*L$ where $\Sigma_n$ is the standard simplicial $n$-sphere and $L\cong\lk(C)$. Now Let $v\in K_C^{0}$ be a vertex. Note that if $E$ is any cube containing $v$, then $\lk(E)\subset \lk(v)$ in a natural way: since $X$ is CAT(0), we have that $\lk(v)$ is a simplicial complex, and $E$ represents a simplex $\Delta$ in $\lk(v)$. Then $\lk(E)$ is isomorphic to $\lk(\Delta)$ as a subcomplex of $\lk(v)$.  Suppose $e_1,e_2\in \lk(v)\cap K_C$ are two vertices corresponding to edges which meet $v$. By the above observation, the link of $e_i$ in $\lk(v)$ is just $\Sigma_{n-1}*L$, where $L$ consists of cubes which are not in $K_C$. Note that any 1-cube in a join $A*B$ either connected two vertices in $A$, two vertices in $B$ or a vertex in $A$ and vertex in $B$.  Then if there is an edge $s\subset \lk(v)$ connecting $e_1$ and $e_2$, we must have that $s\subset \lk(v)\cap K_C$. Since $\lk(v)\cap K_C$ is clearly flag, this implies it is a full subcomplex of $\lk(v)$.  Hence $\iota$ is a local isometry.  
\end{proof}

\subsection{Local structure at singular points}\label{Link}
Let $K_C$ and $K_D$ be subcomplexes as in the proof of the proposition.  As we saw, except at a collection of vertices $\Lambda_C$, the points of $K_C$ all have isomorphic links.  Define $\lk(K_C)$ (resp. $\lk(K_D)$) to be the common link of all points in $K_C$ (resp. $K_D$). We will refer to the vertices in $\Lambda_C$ and $\Lambda_D$ as \textit{branch points} if $\lk(p)\setminus \lk(p)\cap K_C\neq \emptyset$. We call a branch point $p$ a \textit{singular} if $\lk(p)\cap K_C\not\cong\Sigma_{n-1}$. Let $p\in K_C$ be such a branch point and $q=f(p)$ its image in $K_D$.  Consider the link $\lk(p)$ of $p$ in $X$ and define $M_p=\lk(p)\cap K_C$.  The next proposition describes the structure of $M_p$.  
%We summarize the preceding discussion in the following

\begin{proposition} \label{LinkStructure}As above suppose that $p\in K_C$ is a branch point and that away from branch points, $K_C$ is $n$-dimensional with $n\geq 2$. If $n\neq2$, then $M_p\cong \Sa^{n-1}$, the $(n-1)$-sphere.  If $n=2$, then $M_p\cong \Sa^1(k)$ for some $k\geq 4.$ 
\end{proposition}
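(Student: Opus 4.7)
The plan is to use the local Euclidean structure of $K_C$ off $\Lambda_C$ to pin down the links of every positive-dimensional cube of $K_C$ meeting $p$, deduce that $M_p$ is a closed $(n-1)$-manifold with very rigid local combinatorics, and then classify such manifolds. The central observation is that $\Lambda_C$ consists only of vertices, so the interior of every positive-dimensional cube $\Sigma\subset K_C$ lies in $K_C\setminus \Lambda_C$, where $K_C$ is locally Euclidean of dimension $n$ with link $\Sigma_{n-1}$. Picking $q\in\Int(\Sigma)$ and using the join decomposition $\lk_{K_C}(q)=\Sigma_{\dim\Sigma-1}*\lk_{K_C}(\Sigma)$, we obtain $\lk_{K_C}(\Sigma)\cong \Sigma_{n-1-\dim\Sigma}$. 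Since simplices $\sigma\in M_p$ correspond bijectively to cubes $\Sigma\ni p$ in $K_C$ with $\dim\Sigma=\dim\sigma+1$, this gives $\lk_{M_p}(\sigma)\cong \Sigma_{n-2-\dim\sigma}$; in particular every vertex link of $M_p$ is the standard $(n-2)$-sphere $\Sigma_{n-2}$, so $M_p$ is a closed PL $(n-1)$-manifold. Connectedness of $M_p$ follows from the inductive construction in Proposition \ref{FindingFlats}, since each new batch of $n$-cubes at $p$ shares a face through $p$ with a previously added one.

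For $n=2$, $M_p$ is a connected closed $1$-manifold, hence a cycle each of whose edges has length $\pi/2$ in the all-right spherical metric on $\lk_X(p)$. Convexity of $K_C$ in $X$ from Proposition \ref{FindingFlats} makes $M_p$ a full convex subcomplex of the CAT(1) link $\lk_X(p)$, so any closed geodesic in $M_p$ has length $\geq 2\pi$; this forces $k\geq 4$ edges and $M_p\cong \Sa^1(k)$.

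For $n\geq 3$, the vertex-link condition from step one endows $M_p$ with the all-right spherical metric, making it locally isometric to the round sphere $S^{n-1}$. Thus $M_p$ is a closed Riemannian $(n-1)$-manifold of constant curvature $+1$, and its universal cover $\til M_p$ is $S^{n-1}$ with a lifted simplicial structure in which each vertex has $2(n-1)$ neighbors. Vertex-to-vertex distances in an all-right CAT(1) complex are realized along edge paths and hence are multiples of $\pi/2$; since $S^{n-1}$ has diameter $\pi$, every vertex $w\neq v$ of $\til M_p$ satisfies $d(v,w)\in\{\pi/2,\pi\}$. Extending an edge from $v$ past a neighbor $u$ lands at the unique antipode of $v$ in $\lk(u)\cong \Sigma_{n-2}$, which is a vertex of $\til M_p$, so the point of $S^{n-1}$ antipodal to $v$ is itself a vertex. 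This yields exactly $|V(\til M_p)|=1+2(n-1)+1=2n$, and flagness together with the adjacency pattern (each vertex adjacent to every other except its antipode) forces $\til M_p\cong \Sigma_{n-1}$. If the deck group $\Gamma=\pi_1(M_p)$ were nontrivial, $M_p=\Sigma_{n-1}/\Gamma$ would have at most $n$ vertices, contradicting the need for $2(n-1)$ distinct neighbors of any vertex in a simplicial complex without multi-edges; so $\Gamma$ is trivial and $M_p\cong \Sigma_{n-1}\cong \Sa^{n-1}$. The main technical obstacle will be the verification that vertex-to-vertex distances in $\til M_p$ are realized along edge paths rather than via shortcuts through simplex interiors; this requires a careful CAT(1) analysis of geodesics in all-right spherical complexes.
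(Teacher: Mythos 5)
The first half of your argument (showing $M_p$ is a closed $(n-1)$-manifold locally isometric to the round sphere, and the $n=2$ case via fullness of $M_p$ in the CAT(1) link) is sound and runs parallel to the paper. But for $n\geq 3$ there are two genuine gaps. First, your count of $2(n-1)$ neighbours per vertex of $\til M_p$ rests on the claim that $\lk_{K_C}(\Sigma)\cong\Sigma_{n-1-\dim\Sigma}$ \emph{as a simplicial complex}. The join decomposition together with local Euclideanness only gives that this link is \emph{isometric} to a round sphere carrying some all-right spherical triangulation; that such a triangulation must be the hyperoctahedral one (hence has exactly $2(n-1)$ vertices when $\dim\Sigma=1$) is essentially the statement you are proving, one dimension down, and needs its own argument (an induction on dimension, or a developing argument: the simplex adjacent across a facet of an all-right simplex inscribed in the round sphere is its reflection, so the vertex set is forced to be $\{\pm u_i\}$). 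Second---and you flag this yourself---the step ``every vertex $w\neq v$ of $\til M_p$ lies at distance $\pi/2$ or $\pi$ from $v$'' is justified by ``vertex-to-vertex distances are realized along edge paths,'' which is false as stated: geodesics in all-right complexes routinely cross simplex interiors. What you actually need is the lemma that two non-adjacent vertices of an all-right spherical complex are at distance at least $\pi$ (so that in the round $S^{n-1}$ they are antipodal, and there is at most one such vertex); this is a real lemma requiring proof (the first exit from the closed star of $v$ already costs $\pi/2$, and the last entry into the closed star of $w$ costs another $\pi/2$), not a remark. Until both points are supplied, the count $\abs{V(\til M_p)}=2n$ and the identification $\til M_p\cong\Sigma_{n-1}$ are unsupported; the deck-group step itself (a nontrivial quotient would have at most $n$ vertices, too few to carry $2(n-1)$ distinct neighbours of a vertex) is fine once the link count is in place.

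It is worth noting that the paper's proof sidesteps both difficulties: it never needs combinatorial standardness of links nor any control on geodesics. Instead it shows that a closed $m$-manifold ($m\geq 2$) built from regular all-right spherical simplices has at least $2^{m+1}$ top-dimensional simplices, using the incidence identities $2^{m-k}F_k={{m+1}\choose{k+1}}F_m$ together with the no-double-edge inequality ${{F_0}\choose{2}}\geq F_1$, and then compares volumes with $\mathrm{vol}(\Sa^m)=2^{m+1}\mathrm{vol}(\Delta)$ to force the covering degree from $\Sa^m$ to be $1$. Your antipode/vertex-counting strategy can be repaired along the lines indicated above, but as written it replaces the paper's self-contained counting lemma with two unproven assertions.
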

\begin{proof}
Because $X$ is locally finite,  we know that $M_p$ is compact. The star $\st(p)\cap K_C$ is a cone over $M_p$, and admits the structure of a simplicial complex $L$ so that $M_p$ is a subcomplex. We may subdivide so that $M_p$ lies in the interior of $L$ As $K_C$ is locally Euclidean away from $p$, we have $L$ is locally Euclidean away from $p$. By the simplicial neighborhood theorem \cite{RS72}, a regular neighborhood of $M_p$ in $L$ is a closed $n$-manifold with boundary two disjoint copies of $M_p$. Hence, $M_p$ is a compact $(n-1)$-manifold. Moreover, the CAT(0) cube complex structure on $X$ induces a metric simplicial structure on $M_p$ so that $M_p$ is a simplicial manifold built out of regular spherical $(n-1)$-simplices. The isometry $f$ maps $M_p$ isometrically onto $M_q$, which similarly has a spherical simplicial structure.  

The fact that $f$ does not preserve any proper hypersurfaces implies that each simplex in $M_p$ has a locally spherical metric, \emph{i.e.} has a neighborhood isometric to the standard round sphere $\Sa^{n-1}$. More precisely, let $C$ be a cube incident at $p$ with $1\leq \dim(C)\leq n-1$. Then $C\cap\lk_\epsilon(p)$ is a simplex $\sigma$ of dimension $\dim(C)-1$. By Lemma \ref{sec:Trace:lem:Transverse}, for infinitely many $\epsilon$, there exist points in the interior of $\sigma$ which map into the interior of a simplex in $\lk_\epsilon(f(p))$ of dimension 1 higher.  Since $\lk(p)$ has dimension $n-1$, this holds for all $\sigma$ in $\lk(p)$, every point in $\lk(p)$ must have a neighborhood isometric to the interior of an $n-1$-simplex.  The latter is isometric to $\Sa^{n-1}$.

To complete the proof of the proposition, it suffices to prove the next lemma, which implies that for $n\geq 3$, for any singular point $p$, there is only one possibility for $M_p$.

%We will call any vertices in $X$ which are not locally Euclidean \emph{branch} points. We will now investigate the geometry of $X$ at such points. Let $p$ be a branch point and consider the link of $p$ in $X$. In the link there is a subcomplex which is locally Euclidean and hence a manifold.  

\begin{lemma} Suppose $m\geq2$ and let $M$ be a spherical $m$-manifold which has a simplicial structure made up of regular spherical $m$-simplices. Then $M$ is isometric to $\Sa^m$.  
\end{lemma}
\begin{proof} Let $\Delta$ be the regular spherical $m$-simplex. The fact that $M$ is built out of regular spherical $m$-simplices means that the combinatorics of the simplicial structure on $M$ must be locally the same as that of the standard tiling of $\Sa^m$ by regular $m$-simplices. For $0\leq i\leq m$, let $F_k$ be the number of spherical $k$-simplices.  The standard simplicial structure on $\Sa^m$ gives equations for the number of $m$-simplices incident on a $k$-simplex:\[2^{m-k}\cdot F_k= {{m+1}\choose{k+1}}\cdot F_m.\] Since $M$ is a simplicial complex, each edge is determined by its endpoints.  In particular, by the pigeonhole principle we must have that \[{{F_0}\choose{2}}\geq F_1.\] Substituting the equations for $F_0,F_1$ in terms of $F_m$ we obtain:\[\frac{(m+1)^2}{2^{2m+1}}F_m^2-\frac{(m+1)}{2^{m+1}}F_m\geq\frac{m(m+1)}{2^m}F_m.\] Dividing by $F_m>0$, clearing denominators and rearranging we get:\begin{align*}(m+1)^2F_m&\geq 2^m(m+1)+2^{m+1}(m^2+m)\\
&=2^m(2m^2+3m+1)\end{align*}
From this we conclude that $F_m\geq 2^{m+1}$.  If $M$ has $N\geq2^{m+1}$ distinct $m$-simplices and is a degree $d\geq1$ quotient of $\Sa^m$, then by passing to the universal cover, we get a simplicial complex structure on $\Sa^m$ consisting of $Nd$ regular $m$-simplices.  Hence we must have \begin{align*}\text{vol}(\Sa^m)&=Nd\cdot\text{vol}(\Delta)\\
&\geq 2^{m+1}d\cdot \text{vol}(\Delta).
\end{align*}
On the other hand, a simple computation shows that $\text{vol}(\Sa^m)=2^{m+1}\cdot\text{vol}(\Delta)$, hence we conclude that $d=1$ and $M\cong \Sa^m$.  
\end{proof}
By the lemma, when $n\geq 3$, then $n-1\geq 2$ and $M_p\cong \Sa^{n-1}$.  When $n=2$, we still have that $M_p$ must be homeomorphic to $\Sa^1$, since this is the only compact 1-manifold. In fact $M_p$ will be isometric to $\Sa^1(k)$, the circle of total angle $k\cdot \pi/2$.  Since $M$ is a simplicial complex and $X$ is NPC, $k\geq 4$. Thus, in dimension 1, there are countably many possibilities, depending only on the total angle. 
\end{proof}
%\begin{rmk}
%\end{rmk}

%\begin{corollary} \label{SphericalLinks}As above suppose that $p\in K_C$ is a branch point and away from branch points $K_C$ is $n$-dimensional with $n\geq 2$. If $n\neq2$, then $M_p\cong \Sa^{n-1}$.  If $n=2$, then $M_p\cong \Sa^1(k)$ for some $k\geq 4.$ 
%\end{corollary}
\subsection{The general product structure}
Combining the results of sections \ref{Flats} and \ref{Link}, up to this point we have proven the following
\begin{proposition} \label{SpecialRn} Given $f$, $C,$ and $D$ as above, suppose that $\tau_{C,D}$ does not preserve any proper hypersurface. Then there exist convex subcomplexes $K_C \supset C$ and $K_D\supset D$ of $X$ such that 
\begin{enumerate}
\item $K_C$ and $K_D$ are homeomorphic to $\R^n$ and except at a set of vertices $\Lambda_C\subset K_C$ and $\Lambda_D\subset K_D$, the complexes $K_C$ and $K_D$ are locally Euclidean of dimension $n$.
\item All points of $K_C\setminus \Lambda_C$ and $K_D\setminus \Lambda_D$ have isomorphic links.
\end{enumerate}
\end{proposition}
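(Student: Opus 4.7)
The plan is to deduce Proposition \ref{SpecialRn} from Propositions \ref{FindingFlats} and \ref{LinkStructure}, which together supply convex subcomplexes $K_C\supset C$ and $K_D\supset D$ with the claimed local Euclidean structure and identify the link at each singular branch point as a topological $(n-1)$-sphere. Because $f\colon K_C\to K_D$ is an isometry by construction, it suffices to establish the topological identification $K_C\cong \R^n$.

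First, I would observe that $K_C$ is a topological $n$-manifold without boundary: at non-branch points the link inside $K_C$ is the standard triangulated sphere $\Sigma_{n-1}$, and at each singular branch point Proposition \ref{LinkStructure} identifies the link as a topological $(n-1)$-sphere (metrically $\Sa^{n-1}$ when $n\neq 2$ and $\Sa^1(k)$ when $n=2$). As a convex subcomplex of the CAT(0) cube complex $X$, $K_C$ is a proper, complete CAT(0) space, and hence contractible; in particular it is simply connected, and since no compact manifold of positive dimension without boundary is contractible, $K_C$ is non-compact.

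For $n\leq 2$, the classification of manifolds of low dimension finishes the argument, as the unique non-compact simply connected $n$-manifold without boundary is $\R^n$. For $n\geq 3$, Proposition \ref{LinkStructure} moreover gives an isometric identification of every singular link with the round $\Sa^{n-1}$, so the tangent cone at every point of $K_C$ is Euclidean $\R^n$ and $K_C$ is a simply connected, complete, locally Euclidean length space. The exponential map $\exp_p\colon \R^n\to K_C$ from any chosen vertex $p$ is then surjective by geodesic completeness---each round-sphere link has a unique antipode to every direction, so rays extend indefinitely---and injective because distinct CAT(0) rays from $p$ never reconverge; being a continuous bijection between $n$-manifolds, it is a homeomorphism by invariance of domain.

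The main obstacle is establishing geodesic completeness in the case $n\geq 3$, that is, showing that every unit-speed geodesic in $K_C$ starting at $p$ admits a unique extension to a ray. At non-vertex points extension is automatic from local Euclideanness, while at singular branch vertices extension reduces to finding a direction antipodal to the incoming direction in the link---and uniqueness of such an antipode in $\Sa^{n-1}$ is precisely what the link identification from Proposition \ref{LinkStructure} provides. Completeness of the ambient CAT(0) space then lets one push these local extensions out to arbitrarily large time, yielding the required geodesic rays.
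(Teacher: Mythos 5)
Your proposal is correct and follows essentially the paper's own route: the paper states Proposition \ref{SpecialRn} as nothing more than the combination of Proposition \ref{FindingFlats} (convexity, local Euclidean structure away from $\Lambda_C$, $\Lambda_D$, isomorphic links) with Proposition \ref{LinkStructure} (links at branch points are spheres). Your extra work identifying $K_C$ with $\R^n$ --- surface classification for $n\leq 2$, and for $n\geq 3$ the observation that round-sphere links make $K_C$ a complete, simply connected, locally Euclidean CAT(0) space so that the exponential map gives a homeomorphism --- is sound and in fact supplies a justification that the paper leaves implicit (it later invokes only contractibility plus sphere links), so it strengthens rather than deviates from the argument.
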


We can now state the most general form in the case where $C$ and $D$ are locally maximal and $\tau=\tau_{C,D}$ may preserve a proper hypersurface.  If $H$ is a preserved hypersurface, we denote by $\Lambda_H$ the collection of branch points given by the previous proposition.

\begin{thm} \label{GeneralFindingFlats} Given $f$, $C,$ and $D$ as above, there exist convex subcomplexes $K_C \supset C$ and $K_D\supset D$ of $X$ such that 
\begin{enumerate}
%\item If $l=0$ then $K_C$ and $K_D$ are homemorphic to $\R^n$ and locally Euclidean except at a collection of proper hypersurfaces and vertices.
\item $K_C=[0,1]^l\times K_C'$, $K_D=[0,1]^l\times K_D'$. Both $K_C'$, $K_D'$ are  homeomorphic to $\R^{n-l}$ and locally Euclidean except at a collection of proper hypersurfaces and vertices.

\item If $H$ is a proper hypersurface preserved by $\tau$,  then all points of $H\setminus \Lambda_H$ have isomorphic links.
\end{enumerate}
\end{thm}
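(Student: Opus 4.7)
The plan is to reduce the general case to Proposition \ref{SpecialRn} by pulling the cubical direction of $\tau = \tau_{C,D}$ off as a product factor. Using the normal form preceding Lemma \ref{GeneralTransverse}, I would first write $\tau(x) = Ax + b$ with $A = \Lambda \oplus B_0 \oplus B_1 \oplus \cdots \oplus B_k$ and $b = 0 \oplus b_0 \oplus 0 \oplus \cdots \oplus 0$, where $\Lambda$ is the rank-$l$ diagonal $\pm 1$ block, $B_0$ is the block along which $\tau$ translates by the non-integral vector $b_0$, and the remaining $B_i$ are orthogonal blocks which individually do not preserve any proper hypersurface. This gives a splitting $\R^n = \R^l \oplus \R^{n_0} \oplus \cdots \oplus \R^{n_k}$ of the ambient cube, and the $[0,1]^l$ factor in the statement is forced by $\R^l$.

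I would then imitate the iterative construction of Proposition \ref{FindingFlats}, beginning with $K_C^0 = \Int(C)$, $K_D^0 = \Int(D)$ and enlarging at each step by adjoining every $n$-cube whose interior meets the image of a boundary face of the current piece under $f$ or $f^{-1}$. The Remark after Lemma \ref{GeneralTransverse} is the crucial input: the faces which fail to force an enlargement are precisely those of the form $L' \times p$ with $p$ an integral vector and $\tau(0 \times p)$ also integral, i.e.\ faces contained in a $\Z^l$-coset of the $\Lambda$-direction; every other face meets the interior of some higher-dimensional cube and must be thickened. This asymmetry between the $\Lambda$-direction (where cubes are translated to cubes) and its orthogonal complement (where cubes are forcibly widened) produces the desired splitting $K_C = [0,1]^l \times K_C'$ and $K_D = [0,1]^l \times K_D'$, and a slice-by-slice argument as in Proposition \ref{SpecialRn} shows each $K_C'$ is homeomorphic to $\R^{n-l}$.

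To describe the singular locus of $K_C'$ and establish condition (2), I would handle the orthogonal blocks separately. For each $B_i$ with $i \geq 1$ the action on that block is transverse, so Proposition \ref{SpecialRn} applies directly and its contribution to the singular locus is a discrete set of vertices. For the $B_0$ block, the subcubes parallel to $\R^{n_0}$ at which $\tau$ fails to be fully transverse assemble into a lower-dimensional subcomplex $H \subset K_C'$; the trace $\tau|_H$ is again an orthogonal affine map and running the same argument on $H$ (strict dimension drop allows an induction on $n$, with Proposition \ref{SpecialRn} as base case) equips $H$ with its own product decomposition and identifies its singular points. The preserved hypersurfaces in the statement are precisely the resulting $H$'s, and condition (2) — that points of $H \setminus \Lambda_H$ have isomorphic links — follows from the same link-preservation bookkeeping carried out in the proof of Proposition \ref{FindingFlats}, now applied to $\tau|_H$.

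The main obstacle is the convexity of $K_C$ as a subcomplex of $X$ in the presence of the new singular hypersurfaces. I would extend the local-isometry argument from the end of the proof of Proposition \ref{FindingFlats}: at each vertex $v \in K_C$ the link $\lk(v)$ decomposes as a join reflecting the product structure, and one must verify that $\lk(v) \cap K_C$ is a full subcomplex join-factor by join-factor. The delicate vertices are those lying on a preserved hypersurface $H$, where two slabs of the construction meet and one must check that the inductive description of $H$ matches the local description of $K_C$ on either side — so no edge of $\lk(v)$ sneaks into $K_C$ without being accounted for. Lemma \ref{GeneralTransverse} and its Remark control exactly this gluing, ensuring that the construction yields a single convex subcomplex rather than a branched union.
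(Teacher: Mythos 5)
Your set-up (the block normal form for $\tau$, the use of Lemma \ref{GeneralTransverse} and its Remark to split off the $[0,1]^l$ factor, and the re-running of the enlargement construction of Proposition \ref{FindingFlats}) does track the paper's strategy, which indeed begins by observing that the proof of Proposition \ref{FindingFlats} goes through with Lemma \ref{GeneralTransverse} in place of Lemma \ref{sec:Trace:lem:Transverse}. The gap is in your third paragraph, where you locate the hypersurface part of the singular locus inside the $B_0$ block and propose a dimension-dropping induction on ``subcubes parallel to $\R^{n_0}$ at which $\tau$ fails to be fully transverse.'' Once the block decomposition is taken maximal, $\tau_0:x\mapsto B_0x+b_0$ preserves no proper hypersurface within its own block, so there is no such failure locus: Proposition \ref{SpecialRn} applies to $C_0$ directly (this is in fact the first step of the paper's argument), and its singular contribution within that factor is again only a set of vertices $\Lambda_{C_0}$. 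The induction you describe has nothing to recurse on, and, more importantly, it misidentifies the mechanism that produces hypersurface branching.

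In the paper the hypersurface branch locus is a cross-factor phenomenon: $K_C=[0,1]^l\times K_{C_0}\times\cdots\times K_{C_k}$ is built factor by factor (apply \ref{SpecialRn} to $C_0$, then handle each orthogonal block via a projection and Lemma \ref{sec:Trace:lem:Transverse}, using the link-constancy statement \ref{SpecialRn}(2) to propagate the product through every slice and through the branch vertices, and Proposition \ref{LinkStructure} to identify the result with $[0,1]^l\times\R^{n-l}$), and branching can occur only along sets of the form $L\times\Lambda_{C_0}\times\cdots\times K_{C_i}\times\cdots\times\Lambda_{C_k}$, i.e.\ whole factors $K_{C_i}$ --- including the purely orthogonal ones with $i\geq1$ --- sitting over branch vertices of all the complementary factors. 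So your claims that the transverse blocks $B_i$, $i\geq1$, contribute only a discrete set of vertices and that all preserved hypersurfaces arise from the $B_0$ analysis are not correct: for instance $\Lambda_{C_0}\times K_{C_1}$ is a potential branch hypersurface parallel to a transverse factor. As a result your verification of condition (2) is aimed at the wrong family of hypersurfaces; in the paper, (2) is obtained by applying \ref{SpecialRn}(2) to the factor $K_{C_i}$ parallel to the given hypersurface $H$, with $\Lambda_H=\Lambda_{C_i}$, rather than by restricting $\tau$ to $H$ and inducting on dimension.
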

\begin{proof}
The same proof as in Proposition \ref{FindingFlats} goes through by using Lemma \ref{GeneralTransverse} in place of Lemma \ref{sec:Trace:lem:Transverse}. In this proof, we will not explicitly mention $K_D$, but its existence will follow from the existence of $K_C$ together with $f$ and our repeated application of Proposition \ref{SpecialRn}. As in the set up to Lemma \ref{GeneralTransverse} we write $\tau:x\mapsto Ax+b$ where $A=I_l\oplus A_1\oplus B_1\oplus \cdots \oplus B_k$ and $b=b_1$. We can also decompose $C$ as $L\times C_0\times \cdots \times C_k$, where $L$ corresponds to $I_l$, $A_1$ corresponds to $C_0$ and $C_i$ corresponds to $B_i$ for $1\leq i\leq k$. Let $n_i$ be the dimension of $C_i$  for $0\leq i\leq k$. Define maps $\tau_i:\R^{n_i}\rightarrow \R^{n_i}$ by $\tau_0:x\mapsto A_0x+b_0$ and $\tau_i:x\mapsto A_ix$ for $1\leq i\leq k$.

We claim that there exists a full subcomplex $K_C$ containing $C$ which decomposes as a product $[0,1]^l\times\R^{n_0}\times\cdots\times \R^{n_k}$.  Moreover, we claim that along an open set which is the complement of sets of the form $L' \times \Lambda_{C_0}\times \cdots \times \R^{n_i}\times \cdots\times \Lambda_{C_k}$ with $L'\subset L$, the link of all points is the same. Here $\Lambda_{C_i}$ is a collection of vertices in $\R^{n_i}$.

The idea is to apply Proposition \ref{SpecialRn} repeatedly.  In the first step, we apply Proposition \ref{SpecialRn} to $C_0$ to obtain a full subcomplex $K_{C_0}$. The second part of Proposition \ref{SpecialRn} implies that away from a set of vertices $\Lambda_{C_0}$, the link of all points is the same.  Hence each point in $K_{C_0}\setminus \Lambda_{C_0}$ has link isomorphic to $\Sigma_{n_0-1}*\lk(C_0)$. Moreover, every point in $K_{C_0}$ contains a flag subcomplex in its link corresponding to $L$, and we obtain a full subcomplex $L\times K_{C_0}$.

Consider a point $p\in \Int(C_0)$. There is a cube isomorphic to $C_1$ passing through $p$.  Choose a chart $\phi$ for $C$ and $\psi$ for a cube of dimension $n$ containing $f(p)$, and which contains a neighborhood of $p$ in $\{p\}\times C_1$. Locally $\tau$ preserves the decomposition $C_0\times C_1$, so that $\phi(p)\times\R^{n_1}\subset\R^n$ maps to $\psi(f(p))\times\R^{n_1}\subset \R^n$. We can project $\pi_1:\R^n\rightarrow \R^{n_1}$ so that the composition \[\R^{n_1}\hookrightarrow \phi(p)\times \R^{n_1}\rightarrow \R^n\twoheadrightarrow \R^{n_1},\] is just the map $\tau_1$ up to elements of $O(n_1,\Z)$. Now we apply Lemma \ref{sec:Trace:lem:Transverse} to this composition, and the same proof as in Proposition \ref{FindingFlats} shows that we are able to find a subcomplex $K_{C_k}$ containing $p\times C_k$, which is locally Euclidean except potentially where vertices map to vertices under $\tau_1$.  Denote this set of vertices by $\Lambda_{C_1}$.  Since the link at all points in $K_{C_0}\setminus\Lambda_{C_0}$ is the same, we find subcomplexes through every point of $K_{C_0}$ isometric to $K_{C_1}$.  At points in $\Lambda_{C_0}$, the link contains a flag subcomplex corresponding to $\lk(C_0)$, so we are also able to repeat the argument here. Note that the link at all points of $K_{C_0}\times K_{C_1}\setminus (\Lambda_{C_0}\times K_{C_1}\cup K_{C_0}\times \Lambda_{C_1})$ is the same, and is isomorphic to $\Sigma_{n_0}*\Sigma_{n_1}*\lk(C_0\times C_1)$. In particular, we are again able to extend the product to $L\times K_{C_0}\times K_{C_1}$.

Repeating this argument, we find a full subcomplex $K_C=[0,1]^l\times K_{C_0}\times \cdots \times K_{C_k}$, as well as branch vertices $\Lambda_{C_i}\subset K_{C_i}^{(0)}$.  Away from the collection of hypersurfaces $ \Lambda_{C_0}\times\cdots\times K_{C_i}\times\cdots \times\Lambda_{C_k}$, the link of each point intersected with $K_C$ is the same and equals $\Sigma_{n_0-1}\times \cdots \times\Sigma_{n_k-1}$. At other points in $K_{C_0}\times \cdots \times K_{C_k}$, the link of a point is still a join of simplicial spheres by Proposition \ref{LinkStructure}, hence it is still a simplicial sphere.  $K_C$ is a connected full subcomplex, hence CAT(0) and contractible. Since the link of each point of $K_{C_0}\times \cdots \times K_{C_k}$ is a simplicial sphere, $K_C=[0,1]^l\times K_{C_0}\times \cdots \times K_{C_k}\cong [0,1]^l\times \R^n$.

%Assume that we have found a full subcomplex $K_{C'}$ containing $C'=L\times C_0\times\cdots\times C_{k-1}$, and which is isometric to $[0,1]^l\times \R^{n_0}\times\cdots\times \R^{n_{k-1}}$.  Choose a point $p\in \Int(C')$. In $\lk(p)$ we can find a subcomplex corresponding to $C_k$.   Observe that locally $f$ preserves the decomposition given by $C_0\times\cdots\times C_k$, so that in a chart $\phi$ for $C$ and the cube containing $f(p\times 0)$, we see that $\phi(p\times C_k)$ maps to a translate of $0\times \R^{n_k}$. Now apply Lemma \ref{sec:Trace:lem:Transverse} to the composition \[\R^{n_k}\hookrightarrow \phi(p)\times \R^{n_k}\rightarrow \R^n\twoheadrightarrow \R^{n_k},\] which is just the map $\tau_k$.    As in the proof of Proposition \ref{FindingFlats}, we are able to find a subcomplex $K_{C_k}$ containing $p\times C_k$, which is locally Euclidean except potentially where vertices map to vertices under $\tau_k$ (More precisely, we can subdivide $X$ at $p$ and $f(p)$. This makes it so that at $f(p)$ a hyperplane corresponding to $E$ is preserved, but we do not change the size of cubes in the hyperplane containing $E$. Then apply Proposition \ref{FindingFlats}). The construction of $K_{C_k}$ in Lemma \ref{FindingFlats} now extends to the rest of $K_C'$, since all points in $K_{C'}$ contain subcomplexes in their link isomorphic to $\lk(C')$. We find therefore find a full subcomplex $K_C={K_C'}\times K_{C_k}$ as desired.

Branching may occur along cubes in $L\times \Lambda_{C_0}\times\cdots \times K_{C_i}\times \cdots\times \Lambda_{C_k}$.  Let $H$ be a preserved hypersurface $H=v\times \lambda_0\times K_{C_i}\times \cdots\times \Lambda_{C_k}$, where $v\in L^{(0)}$ is a vertex, and $\lambda_j\in \Lambda_{C_j}$. The second part of Proposition \ref{SpecialRn}, ensures that all points in $H\setminus \Lambda_{C_i}$ have isomorphic links.
\end{proof}
As we saw above, the proof in fact determines the structure of $K_C$, $K_D$ more precisely as a product.  When combined with Proposition \ref{LinkStructure} we obtain
\begin{corollary}\label{ProductDecomp}With the same notation as in the previous theorem, let $C=L\times C_0\times C_1\times \cdots\times C_k$.
Then $K_C=[0,1]^l\times K_{C_0}\times K_{C_1}\times \cdots \times K_{C_k}\cong [0,1]^l\times \R^{n_0}\times\cdots \R^{n_k}$, where $n_i=\dim(C_i)$. Moreover, $\R^{n_i}$ is standard if $n_i\neq 2$, and possibly singular if $n_i=2$. 
\end{corollary}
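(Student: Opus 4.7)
The plan is to combine the product structure already produced inside the proof of Theorem \ref{GeneralFindingFlats} with the classification of links at branch points supplied by Proposition \ref{LinkStructure}. The decomposition $K_C=[0,1]^l\times K_{C_0}\times\cdots\times K_{C_k}$, together with the homeomorphism $K_{C_i}\cong\R^{n_i}$, is essentially bookkeeping from that proof: one iteratively applies Proposition \ref{SpecialRn} to each block $C_i$ of $C$, and the resulting $K_{C_i}$ is exactly the subcomplex given by that proposition (simply connected, locally Euclidean away from a set of vertices, and of dimension $n_i$). So the only real content left is to pin down the cube complex structure on each $K_{C_i}$.

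First I would record that each block $B_i$ (and $A_1$) in the decomposition $A=\Lambda\oplus A_1\oplus B_1\oplus\cdots\oplus B_k$ produced by Lemma \ref{GeneralTransverse} has rank at least $2$: a rank-one orthogonal block is just $\pm 1$, which is cubical and therefore would already have been absorbed into $\Lambda$. In particular every $n_i\geq 2$, so the trichotomy in the corollary reduces to the two cases $n_i=2$ and $n_i\geq 3$.

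Next, suppose $n_i\geq 3$. Away from branch points, $K_{C_i}$ is locally Euclidean of dimension $n_i$, so the link of any such point is the standard sphere $\Sigma_{n_i-1}$. At a branch vertex $v\in\Lambda_{C_i}$, Proposition \ref{LinkStructure} applies inside the factor $K_{C_i}$ and, since $n_i-1\geq 2$, forces $M_v\cong \Sa^{n_i-1}$ with its standard simplicial structure (this is precisely the content of the rigidity lemma embedded in the proof of Proposition \ref{LinkStructure}, whose counting argument shows that a spherical $m$-manifold assembled from regular spherical $m$-simplices with $m\geq 2$ must be the standard $\Sa^m$). Hence every vertex of $K_{C_i}$ has link isomorphic to the link of $0$ in the standard cubulation of $\E^{n_i}$, namely $2n_i$ maximal cubes arranged in the standard pattern. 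Because $K_{C_i}$ is a simply connected, locally Euclidean, flag cube complex in which every vertex has the standard link, a routine developing argument identifies $K_{C_i}$ isometrically with the standard cubulation of $\E^{n_i}$.

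For the remaining case $n_i=2$, Proposition \ref{LinkStructure} only asserts $M_v\cong \Sa^1(k)$ for some $k\geq 4$; this is exactly the local model of a cone point in a singular cone metric on $\R^2$ as defined in \S 3, so $K_{C_i}$ is homeomorphic to $\R^2$ built out of unit squares with possibly singular vertices, which is the content of the corollary in that case. The only step requiring any genuine work is the $n_i\geq 3$ case, where one must upgrade pointwise standardness of links to a global identification with $\E^{n_i}$; this is carried out via the developing map, using that all vertex links are the standard $(n_i-1)$-sphere and that $K_{C_i}$ is simply connected.
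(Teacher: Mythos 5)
Your overall route is the same as the paper's: the corollary is obtained by reading off the product structure $K_C=[0,1]^l\times K_{C_0}\times\cdots\times K_{C_k}$ from the proof of Theorem \ref{GeneralFindingFlats} (iterated applications of Proposition \ref{SpecialRn}) and then invoking Proposition \ref{LinkStructure} to control the links at branch vertices; the paper leaves the final "standard links plus simple connectivity implies the standard cubulation of $\E^{n_i}$" step implicit, and your explicit developing-map argument is exactly the intended justification, so that part is fine.

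One assertion in your write-up is wrong, though it does not sink the proof. You claim every block has rank at least $2$ because a rank-one orthogonal block is $\pm 1$, hence cubical, hence absorbed into $\Lambda$. This is correct for the blocks $B_1,\ldots,B_k$, which carry zero translation part, but it fails for the block $A_1$ corresponding to $C_0$: that block carries the non-integral translation $b_0$, and a rank-one map such as $x\mapsto x+\tfrac12$ has cubical derivative yet is not a cubical isometry and cannot be moved into $\Lambda$ (the $\Lambda$-directions have zero translation part by construction). Indeed the paper explicitly allows $n_0=1$ --- this is precisely the case treated in Theorem \ref{FlatDecomp}(3), where $X$ splits off an $\R$-factor. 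So your reduction of the trichotomy to "$n_i=2$ or $n_i\geq 3$" rests on a false premise and omits the case $n_i=1$. Fortunately that case is trivial: a one-dimensional factor $K_{C_0}$ homeomorphic to $\R$ with no branching is automatically the standard cubulation of $\R$, so the statement of the corollary still holds; you should simply add this case rather than excluding it by the rank argument.
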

%\begin{proof}
%For part (1), the link at all points in $H_{C_1}$ contains a flag subcomplex isometric to $\lk(C_1)*$
%The proof is by induction on the number of nontrivial blocks appearing in $A$. The case where $n=1$ is just 
%\end{proof}

\section{The isometry group of a CAT(0) cube complex}

\noindent
In this section we apply the results of the previous section to prove Theorem \ref{main1}. Given a non-cubical isometry $f$, the key idea is to find a subcomplex $K_C$ as above, and observe that we can extend $K_C$ to contain the entire hyperplane dual $C_0\times\cdots\times C_k$. We state the main theorem here in a slightly more technical form, which better describes the structure of $X$ locally near the $\R^n$ subcomplex.

\begin{thm} \label{FlatDecomp} Let $X$ be a CAT(0) cube complex, and suppose $f:X\rightarrow X$ is an isometry which does not take cubes to cubes.  Then there exists a full subcomplex $Z\subset X$ satisfying the following properties:
\begin{enumerate}
\item $Z$ decomposes as $Z=Y\times \R^{n_0}\times\cdots\times \R^{n_k}$ for some $n_i>0$ and some full subcomplex $Y$ of $X$.  
\item If $n_i\neq 2$, the $\R^{n_i}$ factor is standard, while if $n_i=2$, $\R^{n_i}$ may have a singular cone metric.  
\item If $n_0=1$, then $X=X_0\times\R$ for some subcomplex $X_0$.  If $n_0>1$, $Z$ may be branched along a subcomplex $Y\times\Lambda$, where $\Lambda$ is a subset of vertices and proper hypersurfaces.    
\item $Z$ is the closure of a connected component of $X\setminus (Y\times \Lambda)$. 
\end{enumerate}
\end{thm}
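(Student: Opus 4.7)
The plan is to build on the setup of Section \ref{sec:trace}: given the non-cubical isometry $f$, choose locally maximal cubes $C,D\subset X$ with $f(\Int(C))\cap\Int(D)\neq\emptyset$ such that the trace $\tau=\tau_{C,D}$ is not cubical. Corollary \ref{ProductDecomp} supplies a full convex subcomplex $K_C=[0,1]^l\times K_{C_0}\times\cdots\times K_{C_k}$, where each $K_{C_i}$ is homeomorphic to $\R^{n_i}$, standard when $n_i\neq 2$ and possibly a singular cone metric when $n_i=2$. Set $F:=K_{C_0}\times\cdots\times K_{C_k}$, the ``flat'' factor. Properties (1) and (2) of the theorem then hold locally with the choice $Y_0:=[0,1]^l$; the remaining task is to enlarge $Y_0$ to a maximal full subcomplex $Y$ so that $Z:=Y\times F$ embeds in $X$ and satisfies (3) and (4).

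To build $Y$, I would iterate as follows. At a vertex $v\in Y_i$ the link $\lk(v)$ in $X$ contains the join $\Sigma_v*S_0*\cdots*S_k$, where $\Sigma_v$ is the link of $v$ inside $Y_i$ and $S_j$ is the link of $v$ inside $K_{C_j}$. Any cube $Q$ at $v$ whose associated simplex $\sigma_Q\subset\lk(v)$ is disjoint from each $S_j$ extends this join, because the flag condition forces $\sigma_Q*S_0*\cdots*S_k$ to be a simplex of $\lk(v)$; this exhibits a product neighborhood of $v$ isomorphic to $Q\times F_v$. Adjoining every such $Q$ (at every $v\in Y_i$) produces $Y_{i+1}$, and $Y:=\bigcup_i Y_i$. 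Fullness of each $Y_i$ together with the CAT(0) condition on links ensures that perpendicular cubes added at different vertices of $Y_i$ glue consistently, so $Y\times F$ embeds as a full convex subcomplex of $X$. Properties (1) and (2) now follow for $Z=Y\times F$, and the extension terminates precisely at the branching locus $Y\times\Lambda$, with $\Lambda$ the collection of singular vertices and preserved proper hypersurfaces supplied by Theorem \ref{GeneralFindingFlats}. Hence (4).

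For (3), in the case $n_0=1$ the $\R$-factor contributes a $\Sigma_0=\{p^+,p^-\}$ join factor to the link of every vertex of $Z$. Proposition \ref{LinkStructure} only permits singular branching in factors of dimension at least $2$, so no branching can occur transverse to this $\R$-direction, and the two $\R$-direction edges at each vertex of $Z$ are parallel across all of $Z$. In the absence of branching they remain parallel throughout $X$; the resulting single parallel class of edges is dual to a two-sided, non-self-intersecting hyperplane $H$, and the standard splitting criterion for CAT(0) cube complexes then yields $X=X_0\times\R$ with $X_0=H$, which is (3). The main technical obstacle is verifying the inductive construction of $Y$ globally: one must show that perpendicular cubes at different vertices never force inconsistent identifications along the $F$-direction, and that the resulting $Y\times F$ is full in $X$. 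This is handled by iterating the flag-and-join arguments already developed in the proofs of Proposition \ref{FindingFlats} and Theorem \ref{GeneralFindingFlats}.
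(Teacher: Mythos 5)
Your setup (locally maximal $C$, trace $\tau_{C,D}$, Corollary \ref{ProductDecomp} giving $K_C=[0,1]^l\times K_{C_0}\times\cdots\times K_{C_k}$) matches the paper, but the central step of your construction of $Y$ has a genuine gap. You claim that at a vertex $v$ of $Y_i$, any cube $Q$ whose simplex $\sigma_Q\subset\lk(v)$ is disjoint from the flat directions $S_0,\ldots,S_k$ must extend the join, ``because the flag condition forces $\sigma_Q*S_0*\cdots*S_k$ to be a simplex of $\lk(v)$.'' Flagness only fills in simplices whose $1$-skeleton is already present; disjointness of $\sigma_Q$ from the $S_j$ gives you none of the edges between vertices of $\sigma_Q$ and vertices of $S_j$. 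Concretely, attach a free edge $e$ to a vertex $v$ of a cubulated plane: $\sigma_e$ is disjoint from the circle $\lk(v)\cap\R^2$, yet $e$ spans no square with the plane, so no product neighborhood $e\times F_v$ exists. So your inductive enlargement would adjoin cubes that do not in fact span a product with $F$, and the subsequent claims (fullness, convexity of $Y\times F$, and the identification of the branch locus) inherit this failure. The point you defer as ``the main technical obstacle'' is exactly the content that cannot be obtained from flag-and-join combinatorics alone.

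The paper's proof closes this gap by using the isometry $f$ itself, not link combinatorics, to propagate the flat: it takes $Y$ to be the (combinatorial) hyperplane dual to $C'=C_0\times\cdots\times C_k$, so convexity of $Y$ is automatic (an intersection of hyperplanes), and for any maximal cube $E\supset L'\times C'$ it observes that $f|_E$ preserves the decomposition $E=E'\times L'\times C'$ and restricts to $\tau$ on each slice $\{p\}\times L'\times C'$, so Theorem \ref{GeneralFindingFlats} applies again and the product structure $\{p\}\times L'\times K_{C'}$ develops through $E$; iterating over the whole dual hyperplane yields $Z=Y\times K_{C'}$. Your argument for (3) is likewise not the paper's and is under-justified: asserting that a single two-sided hyperplane yields $X=X_0\times\R$ is not a standard splitting criterion. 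The paper instead notes that when $n_0=1$ and $f$ is not cubical at $C'$ the translation part satisfies $b_0\neq0$, so $\Lambda=\emptyset$; then $Z$ is open and closed, hence equals the connected space $X$, giving $X=Y\times\R$ directly. You would need to replace your link-extension step with an argument that actually invokes $f$ (or the duality with hyperplanes) before the rest of your outline can stand.
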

%
%\begin{rmk} By lattice we mean a union of finitely many orbits of vertices under the cocompact quotient of $X$. If no branching occurs, then $X=Y\times \R^n$. 
%\end{rmk}
%Before proving the theorem, we use it to prove Theorem \ref{main2} in the introduction, concerning isometries of CAT(0) cube complexes which are cocompact hyperbolic.
%
%%In the hyperbolic case, the fact that there cannot be flats of dimension at least two means we can give a more refined version of Theorem \ref{FlatDecomp}.
%
%%\begin{thm}\label{FlatDecompHyp} Suppose $X$ is 
%%\end{thm}
%
%\noindent
%We now proceed with the proof of Theorem \ref{FlatDecomp}.
\begin{proof}
Suppose $f:X\rightarrow X$ is not cubical.  Choose $C\subset X$ a locally maximal cube of dimension $n$ such that $f(C)$ is not a cube.  Such a cube always exists since every cube is contained in at least one locally maximal cube.  Let $\tau:x\mapsto Ax+b$ be the trace of $f$ for some choice of chart at $C$.  As in the previous section, decompose $C$ as $C=L\times C_0\times \cdots C_k$ and $\tau$ as $A=I_l\oplus A_0\oplus\cdots\oplus A_k$ with translation part $b=0\oplus b_0\oplus\cdots\oplus 0$.  By Theorem \ref{GeneralFindingFlats}, we can find a subcomplex $K_C=[0,1]^l\times K_{C'}\cong [0,1]^l\times \R^{n_0}\times\cdots\times \R^{n_k}$. 

The cube $[0,1]^l$ corresponds to the directions in which $f$ is cubical. Consider the hyperplane dual to $C'=C_0\times \cdots C_k$, which meets $C$ in a cube parallel to $L$. Let $L'\subset L$ be a subcube. Along each subcomplex of the form $L'\times K_{C'}$ all points in dense open set have the same link, and moreover, $f$ is given by developing $\tau$, hence is still cubical in the directions given by $L'$. 

If $E$ is a maximal cube containing $L'\times C'$, then $E$ extends over all of $L'\times K_{C'}$.  In a chart for $E$, $f$ preserves the hypersurface coming from $ L'\times C'$.  If we write $E=E'\times L'\times C'$ then $f|_E$ preserves this decomposition.  Therefore, for each subcube of the form $\{p\}\times L'\times C'$ where $p\in E'^{(0)}$, $f|_E'$ is just given by $\tau$. This implies that there is a subcomplex $\{p\}\times L'\times K_{C'}$ passing through $\{p\}\times L'\times C'$. Note that if $f|_E$ is not cubical along $E'$, it may be that $E$ itself is contained in a larger subcomplex which splits off more Euclidean factors. 

We can extend the product decomposition to $E$ and keep going, since all points in a dense open subset of $\{p\}\times L'\times K_{C'}$ have the same link by Theorem \ref{GeneralFindingFlats}. In this way, the entire hyperplane $Y$ dual to $C'$ is contained in a subcomplex $Z$ which decomposes as $Z=Y\times K_{C'}$.  Observe that $Y$ is a convex subcomplex since it is an intersection of hyperplanes.  The fact that $Z=Y\times K_C'$ is a full subcomplex now follows from the fact that $K_{C'}$ is. This proves part (1) of the theorem and (2) follows from Corollary \ref{ProductDecomp}.

In the proof of Theorem \ref{GeneralFindingFlats} we found a collection of hypersurfaces along which branching may occur.  For simplicity of notation we have denoted the union of these sets by $\Lambda$.  If $C_0$ is 1-dimensional, and $f$ is not cubical at $C'$, then $b=b_0\neq 0$, and hence $\Lambda=\emptyset$. Otherwise, $n\geq2$ and the branch locus is codimension at least 2 in $Z$. This proves the second half of $(3)$.  Hence, $Z\setminus Y\times \Lambda$ is connected, and since $Z\setminus Y\times \Lambda$ is open, $Z$ is the closure of a connected component of $X\setminus Y\times \Lambda$, proving $(4)$. Finally, for the first half of (3), since $\Lambda=\emptyset$, $Z$ is a connected component of $X$.  As $X$ is connected, $X=Z=Y\times \R$.
\end{proof}

\subsection{Applications to rigidity}
To end the section, we deduce theorems \ref{main2} and \ref{main3} as corollaries of Theorem \ref{FlatDecomp}.  First, we prove Theorem \ref{main2}, concerning $\delta$-hyperbolic CAT(0) cube complexes. 
\begin{thm} Suppose $X$ is $\delta$-hyperbolic, cocompact and 1-ended.  Then $\Isom(X)=\Aut(X)$ unless $X$ is quasi-isometric to $\Hy^2$.  

\end{thm}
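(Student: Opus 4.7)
The plan is to apply Theorem \ref{FlatDecomp} to a non-cubical isometry of $X$ and then use the three hypotheses, $\delta$-hyperbolicity, cocompactness, and 1-endedness, to pin down the structure.

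Assume $\Aut(X) \subsetneq \Isom(X)$ and pick $f \in \Isom(X) \setminus \Aut(X)$. Applying Theorem \ref{FlatDecomp} yields a full convex subcomplex $Z = Y \times \R^{n_0} \times \cdots \times \R^{n_k} \subseteq X$. Because $Z$ is convex in $X$, it inherits the $\delta$-hyperbolic metric, so it contains no isometrically embedded flat $\E^2$. I would use this to rule out cases: (i) any standard $\R^{n_i}$ with $n_i \geq 2$ contains a flat $\E^2$; (ii) if $k \geq 1$ then each of the two factors admits a bi-infinite geodesic---even a singular $\R^2$ does, being a geodesically complete CAT(0) plane---and the product metric turns these into a flat $\E^2 \subseteq Z$; (iii) if $n_0 = 1$, then part (3) of Theorem \ref{FlatDecomp} gives the global splitting $X = X_0 \times \R$, producing at least two ends and contradicting 1-endedness. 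The only surviving case is $k = 0$ and $n_0 = 2$ with a singular cone metric, so $Z = Y \times \R^2_{\text{sing}}$. A parallel argument forces $Y$ to be bounded: otherwise $Y$ would contain a geodesic ray by an Arzel\`a--Ascoli argument for proper complete CAT(0) spaces, and its product with a bi-infinite geodesic in $\R^2_{\text{sing}}$ would embed an isometric flat half-plane into $Z$, violating hyperbolicity.

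With $Y$ bounded, $Z$ is quasi-isometric to its $\R^2_{\text{sing}}$ factor, and I would apply Proposition \ref{ClassifySingularCone} to that factor. Both hypotheses of that proposition are verified from the setup: cocompactness of $X$ forces only finitely many isomorphism types of vertex links, so the cone angles at singular vertices of the $\R^2$ factor are bounded; and the $\delta$-hyperbolicity inherited by $Z$ (hence by the $\R^2$ factor, via coarse projection with bounded fibres) implies via Lemma \ref{cobounded} that the cone points are cobounded. Hence $Z$ is quasi-isometric to $\Hy^2$.

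The remaining, and in my view principal, step is to promote the quasi-isometry type of $Z$ to that of $X$ itself. I would pursue this through the visual boundary: convexity of $Z$ in $X$ induces an embedding $\partial Z \cong S^1 \hookrightarrow \partial X$; cocompactness should imply that the $\Isom(X)$-translates of $Z$ are quasi-dense in $X$, so their $S^1$-boundaries cover $\partial X$; combined with 1-endedness, which forces $\partial X$ to be connected, this should yield $\partial X \cong S^1$. Applying the Milnor--Schwarz lemma to a cocompact lattice $\Gamma \leq \Isom(X)$ then presents $X$ as quasi-isometric to a hyperbolic group with circle boundary, and the Casson--Jungreis/Gabai theorem gives that $\Gamma$ is virtually a surface group, forcing $X$ to be quasi-isometric to $\Hy^2$. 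Establishing the quasi-density of the $\Isom(X)$-translates, and the resulting identification $\partial X \cong S^1$, is where I expect the real work to lie.
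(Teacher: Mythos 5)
Your argument tracks the paper's proof up to and including the identification of the only surviving case: a convex subcomplex $Z=Y\times\R^2_{\mathrm{sing}}$ with $Y$ bounded, whose $\R^2$-factor is quasi-isometric to $\Hy^2$ via Lemma \ref{cobounded}, Lemma \ref{AngleBound} and Proposition \ref{ClassifySingularCone} (your elimination of $n_0=1$ via ``at least two ends'' tacitly needs $X_0$ bounded, but hyperbolicity supplies that, so this is only a cosmetic gap). The genuine gap is exactly the step you flag yourself: promoting ``$Z$ is quasi-isometric to $\Hy^2$'' to ``$X$ is quasi-isometric to $\Hy^2$.'' As written this step is not a proof but a plan, and the plan has a faulty inference: quasi-density of the $\Isom(X)$-translates of $Z$ (which is automatic, since any orbit is quasi-dense) does \emph{not} imply that the circles $\partial(\gamma Z)$ cover $\partial X$ --- a geodesic ray can drift from one translate to another while fellow-travelling none of them --- and even a covering of $\partial X$ by embedded circles would not by itself identify $\partial X$ with $S^1$. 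The subsequent appeal to Milnor--Schwarz and Casson--Jungreis/Gabai also presupposes a proper cocompact action of a discrete group, which is an extra reduction you would have to justify from the bare cocompactness hypothesis.

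The paper closes this step much more directly, using 1-endedness rather than boundary dynamics. By Theorem \ref{FlatDecomp}(4), $Z$ is the closure of a component of $X\setminus(Y\times\Lambda)$, so if $X$ is not contained in a bounded neighborhood of $Z$, then removing a compact neighborhood $K$ of some branch component $Y\times\{\lambda\}$ (compact because $Y$ is) leaves at least two unbounded complementary components; transporting this picture around by the cocompact group action replicates the branching and forces infinitely many ends, contradicting 1-endedness. (If no branching occurs at all, then $X=Y\times\R^2_{\mathrm{sing}}$ outright.) Hence $X$ lies in a bounded neighborhood of $Z$, so $X$ is quasi-isometric to $Z$, and the conclusion follows from the quasi-isometry $Z\sim\Hy^2$ you already established. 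If you want to salvage your boundary route instead, you would need to prove that a single translate of $Z$ is cobounded in $X$ --- which is precisely the content of the paper's ends argument --- so you cannot avoid an argument of this kind.
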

\begin{proof}Suppose $X$ is hyperbolic and that there exists an isometry $f:X\rightarrow X$ which does not take cubes to cubes. By Theorem \ref{FlatDecomp}, we find a subcomplex $Z=Y\times \R^n$ for some $n\geq 1$.  Since $X$ is hyperbolic, it cannot contain any flats of dimension $\geq 2$. It follows that $Y$ must be compact, and that $n=$ 1 or $n=2$, in which case $\R^2$ has a singular Euclidean metric.  If $n=1$, then by part (3) of the theorem, $X=Y\times \R$, contradicting our hypothesis that $X$ be 1-ended.  

Thus, we may assume $n=2$, and that $\R^2$ carries a singular Euclidean metric. If no branching occurs then $X=Y\times \R^2$. By Proposition \ref{ClassifySingularCone} and the cocompactness assumption, we conclude that $X$ is quasi-isometric to $\Hy^2$. Suppose therefore that branching occurs along a cobounded subset of vertices $\Lambda \subset \R^2$.  Since $Y$ is compact, we can consider a compact set $K$ containing a neighborhood of $Y\times \{\lambda\}$ for some $\lambda \in \Lambda$.  The complement of $K$ in $\widetilde{X}$ has at least one unbounded component, coming from $Z$.  If there is more than one unbounded component, then $X$ must have infinitely many ends, contradicting our assumption.  Therefore, $X$ is contained in bounded neighborhood of $Z$, and the action of $G$ preserves the $\R^2$ factor with the singular Euclidean metric.  As above, we conclude that $X$ is quasi-isometric to $\Hy^2$.  
\end{proof}
\begin{rmk} In fact, since by Theorem \ref{FlatDecomp}(1), $Y$ is a full-subcomplex, it is also CAT(0), and hence contractible. Since each component of $X\setminus(Y\times \Lambda)$ is bounded it is possible to collapse $Y$ and each component of $Y\times \Lambda$ \cite{Br16} so that $X\simeq Y\times\R^2_{\text{sing}}$.  If we allow $X$ to have more than one end, we get that $G$ is either virtually free or decomposes as a free product with a surface group.  
\end{rmk}

When $X$ is a singular cone metric on $\R^2$, it is still the case that the automorphism group is a lattice in the full isometry group, as the next proposition shows.
\begin{lemma}\label{FiniteIndexSing} Let $X$ be a CAT(0) cube complex which is a singular cone metric on $\R^2$. 
\begin{enumerate}
\item If $X$ has a single cone point, then $\Isom(X)\cong O(2)$. 
\item If $X$ has at least 2 cone points, then $\Aut(X)$ has finite index in $\Isom(X)$. 
\end{enumerate}
\end{lemma}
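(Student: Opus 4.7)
For part (1), I would show $X$ is globally isometric to its tangent cone via the exponential map. Since $X$ is CAT(0), geodesics are unique so $\exp_p : T_pX \to X$ is a bijection, and it is radially isometric by construction. Because $X$ is locally Euclidean away from $p$, $\exp_p$ is a local isometry on $T_pX \setminus \{0\}$, and combined with the bijectivity and radial isometry this makes it a global isometry. Hence $X \cong \Cone(\R^2, n_p)$, and $\Isom(X) \cong O(2)$ by the example above.

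For part (2), fix two distinct cone points $p, q \in X$. First I would show that $\Isom(X)$ is a discrete topological group. Any isometry preserves cone orders, so $\Isom(X)_p$ permutes the finite set $\{r \in X : d(p, r) = d(p, q),\ n_r = n_q\}$, and its kernel consists of isometries fixing both $p$ and $q$, which fix the geodesic $[p, q]$ pointwise and hence equal the identity or the reflection across it. Thus $\Isom(X)_p$ is finite, and since any isometry of a CAT(0) space is determined by its $1$-jet at a point, $\Isom(X)$ is discrete.

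Next I would introduce the rotation homomorphism $\theta : \Isom^+(X) \to \R/(\pi/2)\Z$ assigning to each orientation-preserving isometry its global rotation angle modulo $\pi/2$. This is well-defined because the holonomy of the flat structure on $X$ minus its cone points is generated by rotations by $(n_r - 4)\pi/2$ around each cone point $r$, all integer multiples of $\pi/2$; and it is a homomorphism because rotational parts of rigid motions add under composition. Its kernel equals $\Aut^+(X)$: if $df_p$ is a rotation by a multiple of $\pi/2$, then $f$ sends the edges at $p$ to edges at $f(p)$, and this cube-preservation propagates through the connected locally Euclidean region and extends across each remaining cone point by continuity of $f$.

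The main obstacle is to show the image of $\theta$ is finite, which I would handle via the crystallographic restriction. Lifting $\Isom^+(X)$ to act on the universal cover of $X$ minus its cone points, which is isometric to $\E^2$, yields a discrete subgroup $\widetilde\Isom \subset \Isom^+(\E^2) = \R^2 \rtimes SO(2)$ sitting in an extension $1 \to \pi_1(X \setminus \Sigma) \to \widetilde\Isom \to \Isom^+(X) \to 1$, where $\Sigma$ denotes the set of cone points. The classical theorem on discrete subgroups of $\Isom^+(\E^2)$ gives that the rotational quotient $\widetilde\Isom / (\widetilde\Isom \cap \R^2)$ is a finite cyclic group of order in $\{1, 2, 3, 4, 6\}$. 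Since $\theta$ factors through this rotational part, its image is finite, so $[\Isom^+(X) : \Aut^+(X)] < \infty$; combined with the index-at-most-$2$ inclusions $\Isom^+(X) \subset \Isom(X)$ and $\Aut^+(X) \subset \Aut(X)$, this yields $[\Isom(X) : \Aut(X)] < \infty$.
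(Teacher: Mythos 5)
Your part (1) is fine and reaches the same conclusion as the paper by a slightly different route (identifying $X$ with $\Cone(\R^2,n)$ via the exponential map rather than arguing directly with rotations and reflections about the cone point), and your idea for (2) — a rotation homomorphism $\theta:\Isom^+(X)\to\R/(\pi/2)\Z$ whose kernel is $\Aut^+(X)$, the angle being well defined because the holonomy of the cubical flat structure lies in $(\pi/2)\Z$ — is a genuinely different and attractive strategy. However, the crucial step, finiteness of the image of $\theta$, has a real gap. The universal cover of $X\setminus\Sigma$ is \emph{not} isometric to $\E^2$: it is an incomplete flat surface whose completion has cone points of infinite angle (already for $\Cone(\R^2,5)$ minus its cone point the universal cover is the infinite cyclic cover, nothing like $\E^2$). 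What you actually have is the developing map $D:\widetilde{X\setminus\Sigma}\to\E^2$, and each lifted isometry $\tilde f$ satisfies $D\circ\tilde f=A_f\circ D$ for some $A_f\in\Isom^+(\E^2)$; but the resulting homomorphism to $\Isom^+(\E^2)$ need not have discrete image even though $\Isom(X)$ acts discretely on $X$, so discreteness does not transfer the way your extension suggests. Moreover, the crystallographic restriction you invoke (rotational part of order $1,2,3,4,6$) holds for groups containing a rank-two lattice of translations (wallpaper groups), not for arbitrary discrete subgroups of $\Isom^+(\E^2)$ — finite rotation groups of any order are discrete — and you have not produced such a translation lattice (e.g.\ if all cone angles are multiples of $2\pi$ the holonomy can even be trivial). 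So finiteness of $\mathrm{im}(\theta)$, hence finiteness of $[\Isom^+(X):\Aut^+(X)]$, is not established.

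For comparison, the paper avoids all of this with a more elementary counting argument: with exactly two cone points, $\Isom(X)$ embeds in the Klein four group and is already finite; with at least three cone points, one chooses cone points $x_0,x_1,x_2$ spanning a geodesic triangle (no singular vertices in its interior), develops its convex hull into the standard cubulation so that the $x_i$ become integer points, and observes that the three pairwise distances allow only finitely many integral configurations. By the pigeonhole principle a uniform power $f^N$ of every isometry restores the cubical configuration around the triangle, and an isometry that is cubical on the triangle is cubical everywhere; this bounds the index of $\Aut(X)$ in $\Isom(X)$. If you want to salvage your holonomy approach, you would need to prove directly that only finitely many rotation angles occur — for instance by pinning $\theta(f)$ down through the angles that geodesics between cone points make with edge directions, which in effect re-introduces a finite-configuration argument of the paper's type.
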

\begin{proof}
If $X$ has a single cone point $x_0$ of order $d$, then any isometry must preserve this cone point and the structure of $X$ is locally Euclidean everywhere else. It follows that any reflection in a geodesic through $x_0$ with equal angles on each side induces an isometry, and therefore any ray emanating from the cone point defines a unique isometry.  Moreover, we can rotate about $x_0$ through any angle $\theta\in [0,\frac{\pi d}{2})$. Conversely, if $f$ is any isometry such that fixes some point $p\neq x_0$, then $f$ fixes the ray from $x_0$ to $p$. Hence $f$ is either the identity or the reflection corresponding to that ray. If $f$ does not fix any points other than $x_0$, we can rotate so that it does.  This concludes the proof of $(1)$.

If $X$ has exactly 2 cone points, then $\Isom(X)$ fixes these two cone points and the geodesic connecting them setwise.  It follows that $\Isom(X)$ is a subgroup of the Klein 4 group $\Z/2\oplus \Z/2$. Finally suppose $X$ has at least three cone points. We will show that there exists $N>0$ such that for any isometry $f\in \Isom(X)$, $f^N$ is cubical. Let $x_0, x_1,$ and $x_2$ be three cone points which span a geodesic triangle $T$. Recall this is equivalent to the fact that there are no singular vertices in $\Int(T)$. Thus the convex hull of $x_0,x_1,x_2$ determines a Euclidean triangle $T'\subset \R^2$. By identifying $x_0$ with $(0,0)\in \R^2$, we see that it is possible to represent $x_1$, and $x_2$ as points in $\R^2$ with integer coordinates. The three distances $d_X(x_0,x_1)$, $d_X(x_0,x_2)$ and $d_X(x_1,x_2)$ therefore determine only finitely many possible configurations. Let $M$ be the number of these configurations.  

Consider an isometry $f\in \Isom(X)$ restricted to $T$. By the pigeonhole principle, one of the powers $f, f^2,\ldots, f^N$ must return the configuration of cubes around $T$ back to itself.  Taking $N=M!$, we see that for any isometry, $f^N$ is cubical on $T$. But then $f^N$ is cubical everywhere.    
\end{proof}

We will see in the next section that there may exist cocompact lattices of CAT(0) cube complexes which are not entirely comprised of cubical isometries. Moreover, there exist cocompact lattices of Euclidean space which do not contain any non-trivial automorphisms of the standard cubical structure. The next theorem says that unless $X$ has a Euclidean factor, every cocompact lattice will have a finite index subgroup consisting only of automorphisms.  

\begin{thm}Let $X$ be a CAT(0) cube complex, and $\Gamma\leq \Isom(X)$ a proper and cocompact subgroup.  Then either there exists a finite-index subgroup $\Gamma'\leq \Gamma$ such that $\Gamma'\leq \Aut(X)$, or $X\cong\E^n\times Y$ for some subcomplex $Y$.
\end{thm}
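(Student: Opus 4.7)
The plan is to reduce to the irreducible case via the Caprace--Monod decomposition and then invoke Theorem \ref{FlatDecomp}. Suppose $\Gamma\cap\Aut(X)$ has infinite index in $\Gamma$; the goal is to exhibit a Euclidean factor of $X$. Write $X=X_1\times\cdots\times X_k\times\E^m$ in Caprace--Monod form, with each $X_i$ irreducible and non-Euclidean. If $m\geq 1$ we are already done with $n=m$ and $Y=X_1\times\cdots\times X_k$, so assume $m=0$. Replace $\Gamma$ by its finite-index subgroup $\Gamma\cap\prod_i\Isom(X_i)$; this preserves the hypothesis that $\Gamma\cap\Aut(X)$ has infinite index, since only finite-index groups are being intersected.

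Since the cube structure on $X$ is the product of those on the $X_i$, an element $(f_1,\ldots,f_k)\in\prod_i\Isom(X_i)$ is cubical if and only if each $f_j$ is cubical on the corresponding factor. Hence some $f\in\Gamma$ has a non-cubical component $f_j$ on some $X_j$. Applying Theorem \ref{FlatDecomp} to $f_j$ inside $X_j$ produces a full subcomplex $Z_j=Y_j\times\R^{n_0}\times\cdots\times\R^{n_l}\subset X_j$. Part (3) of that theorem rules out $n_0=1$, since that would force $X_j=X_j'\times\R$, contradicting irreducibility. Thus $n_0\geq 2$, so $Z_j$ is a nontrivial Euclidean slab sitting inside the irreducible non-Euclidean factor $X_j$, with any branching occurring along a locus $Y_j\times\Lambda_j$ of codimension at least two in $Z_j$ by Theorem \ref{FlatDecomp}(4).

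The main obstacle will be to derive a contradiction from this last configuration. The approach I would take is to use the cocompactness of $\Gamma$ on $X$ to propagate the local product structure of $Z_j$ across the branch locus and then globally over $X_j$, forcing $X_j$ to split as $X_j'\times\R^{n_0}$ and contradicting irreducibility. A technical subtlety is that the projection of $\Gamma$ to $\Isom(X_j)$ need not be discrete or cocompact, so the cocompactness hypothesis must be leveraged on the full product $X$ rather than factor by factor: the key is to show that the hyperplanes of $X_j$ dual to the $\R^{n_0}$ slab of $Z_j$ extend across $X_j$ as parallel families via $\Gamma$-translates, and that the branching (if any) is compatible with these families, so they assemble into a coherent global product decomposition. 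I expect analyzing how distinct translates of $Z_j$ meet along $Y_j\times\Lambda_j$, and showing that the codimension-two branch locus cannot obstruct the global splitting, to be the crux of the argument.
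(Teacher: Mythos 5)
There is a genuine gap, and it lies in your endgame. Your plan sets up the dichotomy ``Euclidean factor of $X$, or contradiction'': assuming no Euclidean de Rham factor, you hope to propagate the slab $Z_j=Y_j\times\R^{n_0}\times\cdots\times\R^{n_l}$ to a global splitting of the irreducible factor $X_j$ and contradict irreducibility. But that is not the dichotomy of the theorem, and no such contradiction is available: the factor $\R^{n_0}$ produced by Theorem \ref{FlatDecomp} may be a \emph{singular} cone metric on $\R^2$ (Theorem \ref{FlatDecomp}(2)), and the examples in \S 6 (branched covers of square tori) give cocompact groups $\Gamma\leq\Isom(X)$ containing non-cubical isometries where $X$ is a singular cone metric on $\R^2$ --- irreducible, with no Euclidean factor, and admitting no splitting whatsoever. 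In that situation the correct conclusion is the \emph{first} alternative of the theorem, and your proposal contains no mechanism for producing a finite-index subgroup of $\Gamma$ consisting of automorphisms. The paper supplies exactly this: Lemma \ref{FiniteIndexSing} shows that for a singular cone metric with at least two cone points, $\Aut$ has finite index in $\Isom$, and a configuration-counting (pigeonhole) lemma --- affinely independent branch vertices with integral coordinates admit only finitely many configurations --- yields a uniform $N$ with $f^N$ cubical on every piece for all $f\in\Gamma$, whence the finite-index cubical subgroup. Even in the non-singular case, deriving a contradiction from every non-cubical element would prove $\Gamma\leq\Aut(X)$ outright, which the examples show is false; so the contradiction you are aiming for simply cannot be reached.

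A secondary issue is the Caprace--Monod reduction itself: you assume the cubical structure on $X$ is the product of cubical structures on the de Rham factors (so that ``cubical'' can be checked factorwise) and that a metric factor $\E^m$ gives a cubical splitting $X\cong\E^m\times Y$ with $Y$ a subcomplex; neither is immediate, and the projection of $\Gamma$ to a factor's isometry group losing discreteness/cocompactness (which you flag) is a real obstacle rather than a technicality. The paper's proof avoids all of this by working directly on $X$, by induction on $\dim X$: apply Theorem \ref{FlatDecomp} to a non-cubical $f\in\Gamma$, take the $\Gamma$-orbit of the branch locus $Y\times\Lambda$ and enlarge it (cocompactness guarantees this stabilizes after finitely many steps), and then split into the case of branching along parallel translates of a single hypersurface --- giving $X=X_0\times H_0$ and induction, with Lemma \ref{FiniteIndexSing} handling singular $\R^2$ factors --- versus isolated branch vertices, handled by the counting lemma above. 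If you want to salvage your approach, you would need to add precisely these two ingredients: the treatment of singular planes via Lemma \ref{FiniteIndexSing}, and the finiteness-of-configurations argument that converts a preserved discrete branch set into finite-index cubicality.
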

\begin{proof} The proof is by induction on the dimension of $X$.  If $\dim(X)=1$, then $X$ is a tree and hence either $X$ has a branch point in which case $\Isom(X)=\Aut(X)$, or $X=\R=\E^1$.  This proves the base case.  

Now assume $\dim(X)>1$ and suppose $\Gamma$ contains a least one non-trivial isometry $f$ that is not cubical. Let $C$ be a maximal cube at which $f$ is not cubical. By Theorem \ref{FlatDecomp}(3), if $f$ preserves a 1-dimensional hyperplane, then $X=Y\times \R$ and we are done. We therefore assume that $f$ does not preserve a 1-dimensional hyperplane, and find a convex subcomplex $Z\cong Y\times K_C$, where $K_C=\R^{n_0}\times \cdots \times\R^{n_k}$.

Note that if the restriction of $\Gamma$ to $Y$ is not cubical, by \ref{FindingFlats}(4) we would be able to decompose $Z$ further. Since $X$, and hence $Z$, is finite-dimensional, we can assume that the decomposition above is maximal and that $\Gamma$ always takes cubes in $Y$ to cubes. If $\Lambda=\emptyset$, then $Z=X$. By part (2) of \ref{FlatDecomp}, if some $n_i\neq 2$, then $\R^{n_i}=\E^{n_i}$ and we have shown $X$ has a Euclidean factor. Otherwise $n_i=2$ for every $i$ and every factor is $\R^2$ with a singular cone metric. Passing to a finite index subgroup, we can assume $\Gamma$ preserves some $\R^2$-factor. Now apply Lemma \ref{FiniteIndexSing}, and by induction, the theorem follows.

Otherwise, $Z$ is branched along $Y\times\Lambda$, where $\Lambda$ is a union of hypersurfaces of the form $\{v_0\}\times\cdots\times\R^{n_i}\times \cdots\times\{v_k\}$ and vertices, where the $v_j\in K_C^{(0)}$. Consider now the orbit $\Gamma.(Y\times\Lambda)$ and the complement $X'=X\setminus \Gamma.(Y\times\Lambda)$.  $\Gamma$ must permute the connected components of $X'$. Let $W$ be the closure of a connected component of $X'$. If $H_0=Y\times \{v_0\}\times\cdots\times\R^{n_i}\times \cdots\times\{v_k\}$ and $H_1=g.H_0\subset W$ is some translate of $H_0$, observe that since $f|_{H_0}$ is not cubical, either $g$ is not cubical or $fg$ is not cubical.  In each case, $H_1$ is a preserved hyperplane, and by applying Theorem \ref{FlatDecomp} to $W$, we see that $W$ decomposes as a product.  If some component $W$ is branched only along a subcomplex of $Y$, then we look at the restriction of $\Gamma$ to this component.  Either the restriction $\Gamma|_W$ is cubical, or $W$ decomposes as in Theorem \ref{FlatDecomp}. 

If the restriction of $\Gamma$ to $W$ is not cubical, we may need to make the branch set larger.  However, by the cocompactness of $\Gamma$, this process can happen only finitely many times.  Indeed take a large compact subset $K\subset X$ which surjects onto $X/\Gamma$, then $K$ meets a translate of each component of the branch set.  In the end, we obtain a $\Gamma$-invariant branch set $\Omega$, such that on each component $W$ of $X''=X\setminus \Omega$, the restriction of $\Gamma$ is either entirely cubical, or it isn't, in which case $W$ decomposes as in \ref{FlatDecomp}.

There are now two cases to consider.  In the first case, suppose branching only occurs along translates of some hypersurface $H_0=\{v_0\}\times\cdots\times\R^{n_i}\times \cdots\times\{v_k\}$, so that any two translates are either disjoint or are equal.  Thus, every connected component of $X''$ is non-cubical, and decomposes as a product with $H_0$.  Then $X=X_0\times H_0=X_0\times Y\times \R^{n_i}$. If $n_i\neq 2$, we are done, otherwise the desired conclusion follows by induction and Lemma \ref{FiniteIndexSing}.

For the second case, branching occurs along isolated vertices or on multiple hypersurfaces.  If the latter occurs, however, we can reduce to the case of isolated vertices by observing that any two hypersurfaces in $\R^{m_0}\times\cdots \times \R^{m_l}$ intersect in a unique point.  By considering these isolated branch points, we see that there are infinitely many in each component of $X''$. Choose representatives $X_1,\cdots, X_s, Z_1,\cdots, Z_r$ for the orbits of the connected components of $X''$, where the $\Gamma|_{X_i}$ is cubical and $Z_j=Y_i\times K_j$, and $K_j$ is a product of (possibly singular) Euclidean spaces. If $\Gamma|_{X_i}$ is cubical, then this does not change after we pass to finite index subgroups of $\Gamma$.  Hence, it suffices to show that there exists $N>0$ such that for any $f\in \Gamma$, $f^N|_{Z_j}$ is cubical. This fact will follow from the next lemma.

\begin{lemma} Consider a collection of points $\{v_0,\cdots, v_M\}\subset\E^n$, where $M\geq n$.  If the $\{v_i\}$ are affine independent, there are only finitely many configurations such that $\{v_1,\ldots,v_M\}$ are vertices of the standard cube complex structure on $\E^n$.
\end{lemma}
\begin{proof} Without loss of generality, we identify $v_0$ with the origin in $\E^n$. The other points $v_1,\ldots,v_M$ are vertices for the standard cube complex structure on $\E^n$, hence have integral coordinates $v_i=(v_{i1},\ldots,v_{in})$. Let $D$ be the maximum distance from $v_0$ to any of the $v_i$. Then there are only finitely many integral points in the ball of radius $D$ about the origin. Hence only finitely many configurations.  Since $M\geq n$, once the positions of the $\{v_i\}$ have been fixed their convex hull contains a simplex in $\E^n$, and the hence the cube complex structure on the rest of $\R^n$ is determined.  
\end{proof}

To finish proof, consider one of the $Z_i$, and suppose $Z_i$ decomposes as $Y_i\times \R^{m_0}\times\cdots \times \R^{m_l}$. Given any $f\in \Gamma$, we can regard $f|_{Z_i}$ as an isometry from $Z_i$ to itself, which must preserve the isolated branch points.  We claim that there exists $N_i$ such that $f^{N_i}$ is a cubical automorphism. By Lemma \ref{FiniteIndexSing}, we may assume that all of the $\R^{m_i}-$factors above are standard. Choose $K_i\subset Z_i$ large enough that it contains an affine independent subset of isolated branch points. This is possible since $Z_i$ is non-compact and contains an open set, hence its stabilizer in $\Gamma$ must act cocompactly.  By the previous lemma, if $N_i$ is the number of possible configurations of branch vertices in $K_i$, we see that as in the proof of lemma \ref{FiniteIndexSing}, $f^{N_i!}|_{Z_i}$ has to preserve the cubical structure, no matter what it is. Setting $N=N_1!\cdots N_r!$, we see that for any $f\in \Gamma$, $f^N$ is a cubical automorphism when restricted to the $X_i$ and all of the $Z_j$, hence is cubical everywhere.  
\end{proof}
\section{Non-cubical Cocompact Examples}

In this final section, we find examples of NPC cube complexes possessing non-cubical isometries.  The examples are all manifolds which are quotients of $\R^n$, which will be standard when $n\neq 2$ and possibly singular when $n=2$. We first construct the Euclidean examples.  Note that any translation of $\E^n$ descends to a non-cubical isometry of a torus quotient.  However, any translation is isotopic to the identity.  The examples below will act non-trivially on the fundamental group, hence will not be isotopic to the identity.

\begin{example}Let $\mathbf{a}=(a_1,a_2)$ and $\mathbf{b}=(b_1,b_2)$ be integral vectors in $\R^2$, and suppose that $\mathbf{a}, \mathbf{b}$ are distinct in the sense that their coordinates do not differ by a signed permutation. In particular, $\mathbf{a},\mathbf{b}$ are linearly independent. Suppose moreover that $a_1^2+a_2^2=b_1^2+b_2^2$. We call such pairs distinct Pythagorean doubles.  Now consider the standard cubical lattice on $\R^2$ and the torus quotient $\T^2_{\mathbf{a},\mathbf{b}}$ by $\Z^2\cong\langle \mathbf{a}, \mathbf{b}\rangle$. Since $\mathbf{a}, \mathbf{b}$ have the same length, there is an isometry which exchanges them.  This is an isometry which does not take cubes to cubes since it does not lie in $O(2,\Z)$, but it does preserve the lattice $\langle \mathbf{a}, \mathbf{b}\rangle$ and hence descends to an isometry of the quotient $\T^2_{\mathbf{a},\mathbf{b}}$ which is not cubical. For example take $\mathbf{a}=(1,8)$ and  $\mathbf{b}=(7,4)$.
\end{example}

The previous example is eaily extended to higher-dimensional tori either by taking products, or by finding distinct Pythagorean $n$-tuples in the same manner as above.  In dimension $n$, one can always obtain the symmetric group from a collection of distinct Pythagorean $n$-tuples which all have the same length. Thus we see that the index of the automorphism group is in the isometry group finite but may be arbitrarily large. Finally, we construct non-cubical isometries on higher-genus surfaces, whose universal covers will be singular Euclidean metrics on $\R^2$.

\begin{example}To get examples of higher genus surfaces in dimension 2, we take branched covers of the torus examples above.  Let  $\T^2_{\mathbf{a},\mathbf{b}}$ be a torus as above with non-cubical isometry $f$. The isometry $f$ fixes a circle on the torus which passes through a vertex $p$ which is the image of the lattice $\Z^2$.  Deleting the base vertex, we obtain a punctured torus whose fundamental group is the free group $F_2$.  Take a finite cover $X'$ of corresponding to a finite-index characteristic subgroup of $F_2$ which does not contain the commutator of the two generators.  Then by filling in the punctures in the cover we obtain a branched cover $\pi:X\rightarrow\T^2_{\mathbf{a},\mathbf{b}}$. The cubical structure on $\T^2_{\mathbf{a},\mathbf{b}}$ lifts to $X$ so that $\pi$ is a map of cube complexes.  Moreover, since the cover is characteristic, $f$ lifts to an isometry $\widetilde{f}:X\rightarrow X$ such that $\pi\circ \widetilde{f}=f\circ \pi$. As $\pi$ is cubical, we conclude that $\widetilde{f}$ is not cubical.
\end{example}

In light of the previous example, we see that both Theorem \ref{main1} and \ref{main2} are best possible, in the sense that singular Euclidean metrics on $\R^2$ may have non-cubical isometries.

%
%Before going into the proof, we use this to determine the isometry groups of CAT(0) cube complexes with a compact quotient. The following corollary depends on the decomposition theorem of Caprace--Monod.  
%
%\begin{corollary} Let $X$ be a CAT(0) cube complex with a compact quotient.  Write $X=Y\times \R^n$ with $n$ maximal.  Then \[\Isom(X)=\Aut(Y)\times \Isom(\R^n).\]In particular, if $X$ is a compact NPC cube complex and $\pi_1(X)$ has trivial center then $\Isom(X)=\Aut(X)$. 
%\end{corollary}
%
%We will need a lemma about cube complex structures on $\R^n$.  It says roughly that any cube complex structure on $\R^n$ is determined by $n+1$ vertices in general position. 
%
%
%Let $f:X\rightarrow X$ be an isometry which does not take cubes to cubes. Choose some cube $C$ such that $f(C)$ is not a cube.  We may assume that $C$ is locally maximal, for if not, then $C\subset of C'$ locally maximal, and if $f(C)$ is not a cube, neither is $f(C')$.  

\bibliography{IsomBib}
\bibliographystyle{plain}

\end{document}